\allowdisplaybreaks \numberwithin{equation}{section}
\numberwithin{equation}{section}
\newtheorem{theorem}{Theorem}[section]
\newtheorem{lemma}[theorem]{Lemma}
\theoremstyle{definition}
\theoremstyle{remark}
\newtheorem{remark}[theorem]{Remark}
\newcommand{\ep}{\varepsilon}
\begin{document}

	\title
	[Regularization for point vortices on $\mathbb S^2$]{Regularization for point vortices on $\mathbb S^2$} 
	
	\author{Takashi Sakajo, Changjun Zou}
	
	\address{Department of Mathematics, Kyoto University, Kyoto, 606--8502, Japan}
	\email{sakajo@math.kyoto-u.ac.jp}
	\address{Department of Mathematics, Sichuan University, Chengdu, Sichuan, 610064, P.R. China}
	\email{zouchangjun@amss.ac.cn}
	
	\thanks{}

	\begin{abstract}
		We construct a series of patch type solutions for incompressible Euler equation on $\mathbb S^2$, which constitutes the regularization for steady or traveling point vortex systems. We first prove the existence of $k$-fold symmetric patch solutions, whose limit is the well-known von K\'arm\'an point vortex street on $\mathbb S^2$; then we consider the general steady case, where besides a non-localized part induced by the sphere rotation, $j$ positive and $k$ negative patches are located near a nondegenerate critical point of the Kirchhoff--Routh function 
on $\mathbb S^2$. Our construction is accomplished by Lyapunov--Schmidt reduction argument, where the traveling speed or vortex patch location are used to eliminate the degenerate direction of a linearized operator. 
We also show that the boundary of each vortex patch 
is a $C^1$ close curve, which is a perturbation of a small ellipse in the spherical coordinates. As far as we know, this is the first attempt for a regularization of the point-vortex equilibria on $\mathbb S^2$. 
	\end{abstract}
	
	\maketitle{\small{\bf Keywords:}  Incompressible Euler equation on $\mathbb S^2$; Regularization for point vortices; K\'arm\'an vortex street; Lyapunov--Schmidt reduction argument \\ 
		
		{\bf 2020 MSC} Primary: 76B47; Secondary: 76B03.}
	
	\tableofcontents

	\section{Introduction}
	
	We construct a new kind of vorticity solution for the incompressible Euler equation on the rotating unit sphere $\mathbb S^2$ defined by
	$$\mathbb S^2:=\{ (x_1,x_2,x_3)\in\mathbb R^3 \mid x_1^2+x_2^2+x_3^3=1\},$$
	where the vorticity is constituted by $k$ positive and $j$ negative patches near a $k+j$ point vortex system. In this introduction, 
	we will derive the mathematical model, present some historical notes, expose our main results, and give the organization of the paper.

	\subsection{Euler equation on a unit sphere $\mathbb S^2$}
	
	Our research mainly deals with the vorticity solutions for the Euler equation on a unit sphere $\mathbb S^2$, which is a model to 
	describe the dynamic of westerlies or hurricanes on Earth, great red spots on Jupiter, and sunspots. These solutions are important 
	for meteorological predictions and the study of the motion of planets' atmospheres. For a more complete introduction to these equations, 
	we refer the readers to \cite{Gre, Yuri}. To derive the semilinear elliptic equation of the vorticity function and explain our 
	approach for construction, we first introduce some basic notions in mathematical analysis on $\mathbb S^2$, which is endowed 
	with a smooth manifold structure described by the following two charts:
	\begin{align*}
	   C_1: (0,\pi)\times(0,2\pi)&\to \mathbb R^3\\
	   (\theta,\varphi)&\to (\sin\theta\cos\varphi, \sin\theta\sin\varphi,\cos\theta)\\
	   C_2: (0,\pi)\times(0,2\pi)&\to \mathbb R^3\\
	   (\bar\theta,\bar\varphi)&\to (-\sin\bar\theta\cos\bar\varphi, -\sin\bar\theta,-\sin\bar\theta\sin\bar\varphi).
	\end{align*}
	In this paper, we mainly work on the first chart $C_1$, where the two variables $\theta$ and $\varphi$ are called the colatitude 
	and the longitude respectively. On the unit sphere $\mathbb S^2$, the Riemannian metric is given by
	\begin{equation*}
		\mathbf g_{\mathbb S^2}(\theta,\varphi)=d\theta^2+\sin^2\theta d\varphi^2.
	\end{equation*}
	Let $ \{N, S\}$ be the south and north poles of $\mathbb S^2$. Then for any point $\boldsymbol z = (\theta, \varphi) \in \mathbb S^2 \setminus \{N, S\}$ 
	on the sphere except for the two poles, an orthogonal
	basis of the tangent space $T_{\boldsymbol z}\mathbb S^2$ is given by
	\begin{equation*}
		\boldsymbol e_\theta=\partial_\theta \quad \mathrm{and} \quad \boldsymbol e_\varphi=\frac{\partial_\varphi}{\sin\theta},
	\end{equation*}
	where the classical identification between tangent vectors and directional differentiation is used. In this
	coordinate frame, the Riemannian volume is given by
	\begin{equation*}
		d\boldsymbol\sigma=\sin\theta d\theta d\varphi.
	\end{equation*}
	Denote $f(\boldsymbol z):\mathbb S^2\to \mathbb R$ as a function on $\mathbb S^2$. The integration for $f$ on the sphere can be written as 
	\begin{equation*}
		\int_{\mathbb S^2}f(\boldsymbol z)d\boldsymbol \sigma(\boldsymbol z)=\int_0^{2\pi}\int_0^{\pi}f(\theta,\varphi)\sin\theta d\theta d\varphi.
	\end{equation*}
	The gradient of $f$ is defined by
	\begin{equation*}
		\nabla_{\mathbb S^2} f(\theta,\varphi)=\partial_\theta f(\theta,\varphi)\boldsymbol e_\theta+\frac{\partial_\varphi f(\theta,\varphi)}{\sin\theta}\boldsymbol e_\varphi,
	\end{equation*}
	whose normal is 
	\begin{equation*}
		\nabla^{\perp}_{\mathbb{S}^2} f(\theta,\varphi)=J\nabla_{\mathbb{S}^2} f(\theta,\varphi),\quad\quad \mathrm{Mat}(J)=\left(
		\begin{array}{ccc}
			0 & 1 \\
			-1 & 0 \\
		\end{array}
		\right).
	\end{equation*}
	For a vector function $\boldsymbol f(\boldsymbol z) = (f_\theta(\theta, \varphi), f_\varphi(\theta, \varphi))$, the divergence is defined by
	\begin{equation*}
		\nabla_{\mathbb S^2} \cdot\boldsymbol f(\theta,\varphi)=\frac{1}{\sin\theta}\partial_\theta (\sin\theta f_\theta(\theta,\varphi)) +\frac{1}{\sin\theta}\partial_\varphi f_\varphi(\theta,\varphi).
	\end{equation*}
	Using the gradient and the divergence on the sphere, we can write down the Laplace--Beltrami operator on  $\mathbb S^2$, 
	\begin{equation*}
		\Delta_{\mathbb S^2} f(\theta,\varphi)=\frac{1}{\sin\theta}\partial_\theta (\sin\theta\partial_\theta f(\theta,\varphi)) +\frac{1}{\sin^2\theta}\partial^2_\varphi f(\theta,\varphi),
	\end{equation*}
	where the north and south poles are two singular points.
	In the sequel, we use the Green representation for $-\Delta_{\mathbb S^2}$, namely, 
	\begin{equation*}
	   (-\Delta)^{-1}_{\mathbb S^2}f=\int_{\mathbb S^2}G(\boldsymbol z,\boldsymbol z')d\boldsymbol\sigma(\boldsymbol z'),
	\end{equation*}
	as an important tool for the construction of localized vorticity solutions, where the function $G(\boldsymbol z,\boldsymbol z')$ takes the form
	\begin{equation*}
		G(\theta,\varphi,\theta',\varphi')=-\frac{1}{4\pi}\ln\big(1-\cos\theta\cos\theta'-\sin\theta\sin\theta'\cos(\varphi-\varphi')\big)+\frac{\ln2}{4\pi}.
	\end{equation*}
	Notice that  
	\begin{align*}
		D(\theta,\phi,\theta',\varphi')&=1-\cos\theta\cos\theta'-\sin\theta\sin\theta'\cos(\varphi-\varphi')\\
		&=2\left[\sin^2\left(\frac{\theta-\theta'}{2}\right)+\sin\theta\sin\theta'\sin^2\left(\frac{\varphi-\varphi'}{2}\right)\right].
	\end{align*}
    By the Taylor expansion for the sine function, we see that the singular part in $G(\boldsymbol z,\boldsymbol z')$ is similar to the fundamental solution $-\frac{1}{2\pi}\ln\frac{1}{|\boldsymbol x|}$ for $-\Delta$ in $\mathbb R^2$, which is defined as
    \begin{equation*}
    	\Gamma(\theta,\varphi,\theta',\varphi')=-\frac{1}{4\pi}\ln\big[(\theta-\theta')^2+(\varphi-\varphi')^2 \sin^2\theta\big].
    \end{equation*}
    Then we can split $G(\boldsymbol z,\boldsymbol z')$ into two parts
    \begin{equation}\label{1-1}
    	\begin{split}
    	G(\theta,\phi,\theta',\phi')&=\Gamma(\theta,\varphi,\theta',\varphi')+\left[G-\Gamma(\theta,\varphi,\theta',\varphi')\right]\\
    	&=\Gamma(\theta,\varphi,\theta',\varphi')+H(\theta,\varphi,\theta',\varphi').
    	\end{split}
    \end{equation}
    The function $H(\theta,\varphi,\theta',\varphi')\in C^1(B_\delta(\boldsymbol z)\times B_\delta(\boldsymbol z))$ for any $\boldsymbol z\in \mathbb S^2\setminus \{N, S\}$ is the remaining regular part, 
    in which $B_\delta(\boldsymbol z)$ denotes a spherical cap around $\boldsymbol z$ with a small radius $\delta$.
    This observation is a starting point for our construction of solutions close to point vortices. 
	
	Now we introduce the vorticity formulation for the incompressible Euler equations on a rotating sphere $\mathbb S^2$, which takes the form
	\begin{align}\label{1-2}
		\begin{cases}
			\partial_t\omega+\boldsymbol{v}\cdot \nabla_{\mathbb S^2} (\omega-2\gamma\cos\theta)=0,\\
			\ \boldsymbol{v}=\nabla^\perp_{\mathbb S^2}(-\Delta)^{-1}_{\mathbb S^2}\omega,\\
			\omega\big|_{t=0}=\omega_0,
		\end{cases}
	\end{align}
	where $\omega$ is the vorticity function, $\boldsymbol v$ is the velocity field, and $\gamma$ is the uniform angular rotation speed. In \eqref{1-2}, the term 
	$-2\gamma\boldsymbol v\cdot \nabla_{\mathbb S^2}\cos\theta$ corresponds to the Coriolis force coming from the rotation of the sphere, 
	and the second equation is the Biot--Savart law on $\mathbb S^2$, which means the velocity field $\boldsymbol v$ is divergence free.
	According to the divergence theorem, we can deduce the Gauss constraint 
	\begin{equation*}
		\int_{\mathbb S^2}\omega(t,\boldsymbol z)d\boldsymbol \sigma(\boldsymbol z)=0,\quad \forall \, t\in[0,+\infty).
	\end{equation*}
	
    In this paper, we first consider the traveling wave solutions as a special relative equilibrium of \eqref{1-2} with $\gamma=0$, 
    which is obtained by a translation acting on initial data $\omega_0$, namely, 
	\begin{equation*}
		\omega(t,\theta,\varphi)=\omega_0(\theta, \varphi+Wt).
	\end{equation*}
	By substituting it into \eqref{1-2} and letting $\psi=(-\Delta)^{-1}_{\mathbb S^2}\omega$ be the stream function, we find that the first equation of \eqref{1-2} becomes
	\begin{equation*}
		\nabla_{\mathbb{S}^2}^\perp(\psi+W\cos\theta)\cdot \nabla_{\mathbb S^2} \omega=0,
	\end{equation*}
	which means that $\psi+W\cos\theta$ and $\omega$ are functional dependent. Hence if we impose
	\begin{equation}\label{1-3}
		(-\Delta_{\mathbb S^2})\psi=F(\psi+W\cos\theta),
	\end{equation}
    then $\omega=(-\Delta)_{\mathbb{S}^2}\psi$ gives a traveling wave solution to \eqref{1-2} with $\gamma=0$. It should be noticed that if $\gamma\neq 0$, then the support of potential vorticity $\omega-2\gamma\cos\theta$ is the whole sphere $\mathbb S^2$, which will bring extraordinary difficulties for the construction of traveling localized vortices $\omega$, even if for point vortex systems. For discussions on this direction, we refer interested readers to \cite{New0}. 
    
    However, we can construct non-localized steady solutions to \eqref{1-2} with $\gamma\neq0$ such that $\partial_t\omega=0$. In this situation, the first equation in \eqref{1-2} turns to be
    \begin{equation*}
    	\nabla_{\mathbb{S}^2}^\perp\psi\cdot \nabla_{\mathbb S^2} (\omega-2\gamma\cos\theta)=0,
    \end{equation*}
    from which we deduce that if $\psi$ satisfies 
    	\begin{equation}\label{1-4}
    	(-\Delta_{\mathbb S^2})\psi=F(\psi)+2\gamma\cos\theta,
    \end{equation}
    then $\omega=(-\Delta)_{\mathbb{S}^2}\psi$ gives a steady solution to \eqref{1-2} on a rotating sphere at speed $\gamma$. 
    
    In fact, these two kind solutions can be transformed to each other. Notice that it holds $(-\Delta_{\mathbb S^2})\cos\theta=2\cos\theta$. We have following lemma, which describes the equivalence for \eqref{1-3} and \eqref{1-4}.
    \begin{lemma}\label{lem1-1}
    	Let $\psi$ be a solution of
    	$$(-\Delta_{\mathbb S^2})\psi=F(\psi+W\cos\theta).$$
    	Then $\psi_\gamma=\psi+\gamma\cos\theta$ gives a solution to
    	$$(-\Delta_{\mathbb S^2})\psi_\gamma=F\big(\psi_\gamma+(W-\gamma)\cos\theta\big)+2\gamma\cos\theta.$$
    \end{lemma}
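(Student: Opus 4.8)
The proof of Lemma~\ref{lem1-1} is a direct verification, and the plan is to substitute $\psi_\gamma=\psi+\gamma\cos\theta$ into the target equation and reduce it to the hypothesis using the eigenfunction identity $(-\Delta_{\mathbb S^2})\cos\theta=2\cos\theta$. First I would compute the left-hand side: by linearity of the Laplace--Beltrami operator,
\begin{equation*}
(-\Delta_{\mathbb S^2})\psi_\gamma=(-\Delta_{\mathbb S^2})\psi+\gamma(-\Delta_{\mathbb S^2})\cos\theta=(-\Delta_{\mathbb S^2})\psi+2\gamma\cos\theta.
\end{equation*}
Then I would invoke the hypothesis $(-\Delta_{\mathbb S^2})\psi=F(\psi+W\cos\theta)$ to replace the first term, obtaining $(-\Delta_{\mathbb S^2})\psi_\gamma=F(\psi+W\cos\theta)+2\gamma\cos\theta$.

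It remains to rewrite the argument of $F$ in terms of $\psi_\gamma$. The key algebraic observation is that
\begin{equation*}
\psi+W\cos\theta=(\psi+\gamma\cos\theta)+(W-\gamma)\cos\theta=\psi_\gamma+(W-\gamma)\cos\theta,
\end{equation*}
so that $F(\psi+W\cos\theta)=F\big(\psi_\gamma+(W-\gamma)\cos\theta\big)$. Substituting this into the expression above yields exactly
\begin{equation*}
(-\Delta_{\mathbb S^2})\psi_\gamma=F\big(\psi_\gamma+(W-\gamma)\cos\theta\big)+2\gamma\cos\theta,
\end{equation*}
which is the desired equation. This completes the verification.

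There is no real obstacle here; the only point requiring a word of care is the identity $(-\Delta_{\mathbb S^2})\cos\theta=2\cos\theta$, which the excerpt already records just before the statement, and the well-definedness of $F$ on the shifted argument (the composition $F(\psi_\gamma+(W-\gamma)\cos\theta)$ makes sense on the same domain since $\psi_\gamma+(W-\gamma)\cos\theta$ and $\psi+W\cos\theta$ are the same function). I would conclude by remarking that, as a consequence, constructing traveling solutions of \eqref{1-3} with speed $W$ is equivalent, after the shift by $\gamma\cos\theta$, to constructing steady solutions of \eqref{1-4} on a sphere rotating at speed $\gamma$ with effective profile speed $W-\gamma$; in particular the steady case $\partial_t\omega=0$ corresponds to $W=\gamma$, which is why it suffices to carry out the analysis for the traveling problem and transfer the results.
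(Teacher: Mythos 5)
Your verification is correct and is exactly the argument the paper intends: the paper records the key identity $(-\Delta_{\mathbb S^2})\cos\theta=2\cos\theta$ immediately before the lemma and leaves the remaining linearity-plus-relabelling computation implicit, which is precisely what you carried out. Nothing further is needed.
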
 
    In view of Lemma \ref{lem1-1}, once we construct a traveling localized wave solution to \eqref{1-3}, we actually obtain a class of non-localized solutions to \eqref{1-4} with different sphere rotation speed.

    \subsection{Historical notes}
    
    The vorticity formulation of incompressible Euler equation in $\mathbb R^2$ reads
    \begin{align}\label{1-5}
    	\begin{cases}
    		\partial_t\omega+\boldsymbol{v}\cdot \nabla \omega=0,\\
    		\ \boldsymbol{v}=\nabla^\perp(-\Delta)^{-1}\omega,\\
    		\omega\big|_{t=0}=\omega_0.
    	\end{cases}
    \end{align}
    The global well-posedness for \eqref{1-5} with initial data in $L^1\cap L^\infty$ was established by Yudovich \cite{Yud} in 1963, 
    which is also extended to the Euler equation on the sphere \eqref{1-2}. 
    Since then, the theory of weak solutions has been considerably improved, see \cite{Del, Dip, Elg, MB}.

    Besides the well-posedness, researchers who study fluid mechanics are also interested in specific global solutions of \eqref{1-2} or \eqref{1-5}, 
    which maintain their shape during evolution. The general approach for giving such solutions is to make perturbations near trivial ones, 
    which can be roughly divided into two large classes. 
    
    Owing to the structure of the nonlinear term, it is well-known that all radial functions are stationary solutions to \eqref{1-5}, 
    and the vortices bifurcating from these trivial ones constitute the first class of global solutions. These $N$-fold symmetric solutions rotate at a uniform angular velocity, 
    which is also known as V-states. In 1978, Deem and Zabusky \cite{Deem} used numerical simulations to show the possible existence of V-states. 
    A few years later, Burbea \cite{Burb} put forward a new method of finding V-states for \eqref{1-5}, where local bifurcation was applied in Hardy space, 
    and the key observation is to rewrite \eqref{1-5} in a functional analytic framework by conformal mapping. 
    However, the proof in \cite{Burb} has a gap, since the functions in Hardy space do not actually have derivatives. It was not until 2013 
    that Hmidi, Mateu, and Verdera \cite{Hmi} gave the first rigorous proof for the existence of simply--connected V-states by contour dynamics equations, 
    and Hassainia and Hmidi \cite{Has} extended this approach to the gSQG case. Since then, the bifurcation method has been widely used in the construction
    of V-states for active scalar equations. For instance, De La Hoze, Hassaina, Hmidi, Mateu, Verdera \cite{de2} proved the existence of doubly connected 
    vortex patches bifurcated from annuli at specific angular velocity. While $2$-fold symmetric V-states near Kirchhoff elliptic vortices in Euler flow \cite{Kir} 
    and V-states with smooth vorticity distribution were discovered by Castro, C\'ordoba, and G\'omez-Serrano in \cite{Cas3}. 
    Moreover, Hassainia, Masmoudi, and Wheeler \cite{Has0} made continuation on the bifurcation parameter by analytic tool, and studied the global 
    behavior of solutions. For quasi-periodic cases, there are also some results by KAM theory, see \cite{BHM, HHM}.
    
    For the equation \eqref{1-2} on $\mathbb S^2$, the analogies for V-States of \eqref{1-5} in $\mathbb R^2$ are called zonal solutions, 
    where the stream function $\psi(\theta,\varphi)=\psi(\theta)$ is longitude independent. 
    Especially, The zonal Rossby--Haurwitz stream functions of degree $n\in\mathbb N$ 
    $$\Psi_n(\theta)=\beta Y_n^0(\theta)+\frac{2\gamma}{n(n+1)-2}\cos\theta, \quad \beta\in\mathbb R_+$$
    are special stationary solutions, where $Y_n^0(\theta)$ are spherical harmonic functions. 
    In \cite{CG}, Constantin and Germain considered local and global bifurcation of non-zonal solutions to \eqref{1-2} from Rossby--Haurwitz waves. 
    They also proved the stability in $H^2(\mathbb S^2)$ of degree $2$ case as well as the instability in $H^2(\mathbb S^2)$ of 
    more general non-zonal Rossby--Haurwitz type solutions. They also showed that any solution to \eqref{1-2} with $F'>-6$ must be zonal (modulo rotation)
    and stable in $H^2(\mathbb S^2)$ provided an additional constraint $F' < 0$, where the constant $-6$ corresponds to the second eigenvalues 
    of the Laplace--Beltrami operator. Very recently, Garc\'ia, Hassainia, and Roulley \cite{GHR} developed the idea in \cite{Hmi}, and constructed $k$-fold
     symmetric vortex cap solutions by using contour dynamic equations and bifurcation tool. For other existence and stability results on this aspect, 
     we refer to \cite{CWZ, Cap, Nua}.
    
    In this sequel, we mainly consider the second class of global solutions, which consist of relatively regular vortex patches 
    near a system of point vortices given by $$\omega^*(\boldsymbol x)=\sum_{n=1}^N\kappa_n\boldsymbol \delta_{\boldsymbol x_n}$$
    with $\boldsymbol \delta_{\boldsymbol x}$ the Dirac measure at point $\boldsymbol x$. Although the velocity field at the location of each point vortex is singular,
    their dynamics can be understood in a way that each point vortex is transported by the velocity field created by the other point vortices. 
    More precisely, this point vortex system on a two-dimensional domain $D$ is dominated by a Hamiltonian system, 
    \begin{equation*}
    	\kappa_n\boldsymbol \dot {\boldsymbol x}_n=\nabla^{\perp}_{\boldsymbol x_n}\mathcal K_N, \quad 1\le n\le N,
    \end{equation*}
    where $\mathcal K_N$ is called the Kirchhoff--Routh function defined as
    \begin{equation*}
    	\mathcal K_N(\boldsymbol x_1,\boldsymbol x_2,..,\boldsymbol x_N):=\frac{1}{2}\sum_{m,n=1,m\neq n}^N\kappa_m\kappa_nG_D(\boldsymbol x_n,\boldsymbol x_m)+\frac{1}{2}\sum_{m=1}^N\kappa_m^2H_D(\boldsymbol x_m,\boldsymbol x_m)
    \end{equation*}
    with $G_D$ denotes the Green kernel on $D$, $H_D$ is its corresponding Robin function. See \cite{Lin1, Lin2} for the planar case 
    and \cite{Drit} for the spherical case. Owing to the Hamilton structure for the motion of point vortices, the point-vortex dynamics 
    on the Riemann surface has been discussed by many mathematicians, and different kinds of relative equilibrium were found, 
    for which we refer to \cite{New0, New, Sakajo-Torus, Wang}.
    
    The procedure of approximating $\omega^*$ by relatively regular solutions is called a regularization or a desingularization of point vortices. 
    For the planar case \eqref{1-4}, there is abundant literature on this field. In \cite{March}, Marchioro and Pulvirenti considered the evolution 
    of vorticity in the Euler flow with smooth initial data near $k$ point vortices. 
    The same topic was discussed by Davila, Del Pino, Musso, and Wei via a newly developed gluing method in \cite{Dav}. 
    As for the regularization for equilibria in the Euler flow, Turkington constructed global vortex patch solutions near point vortices in \cite{T1}, 
    which is inspired by Arnold's dual variational principle \cite{Ar2}, and is now known as the vorticity method. 
    There are also other approaches towards the same aim, which focus on the stream function and its related semilinear elliptic equation. 
    For example, in \cite{SV}, Smets and Van Schaftingen gave a regularization of stationary vortices in different cases by the mountain-pass lemma; 
    in \cite{Cao3}, Cao, Peng, and Yan considered the vortex patch problem of the planer Euler equation by singular perturbation of stream function. 
    The contour dynamic equation used for the construction of V-states is also available for a regularization procedure, which we refer to \cite{HM} 
    for the construction of co-rotating and counter-rotating patch pairs. 
    
    In the following, we will construct a series of patch solutions to approximate the von K\'arm\'an vortex street on $\mathbb S^2$, and then deal 
    with a general case. Our construction is more difficult and delicate than the planar case due to the different structure of Laplacian. 
    However, we will show that the dynamic property of small vortex patches on $\mathbb S^2$ is strongly dependent on the corresponding 
    Kirchhoff--Routh function, which is similar to the regularization for point vortices on $\mathbb R^2$.
  
    \subsection{The main results}
    
    Let us start our research from the most classic point vortex system on the unit sphere $\mathbb S^2$, namely, the $k$ periodic von K\'arm\'an vortex street, 
    where there are $2k$ point vortices with the circulation $\kappa$, half positive and half negative, traveling at a uniform angular speed along the equator. 
    
    To be more precise, for $1\le i\le k$, let $\boldsymbol z_i^+=(\theta_0,\varphi_i^+)$ be the location of positive vortices on the northern hemisphere and $\boldsymbol z_i^-=(\pi-\theta_0,\varphi_i^-)$ be the location of negative vortices on the southern hemisphere. There are two types of vortex streets, the first type is 
    $$\varphi_i^+=\frac{2\pi i}{k}-\frac{\pi}{k}, \quad \quad \varphi_i^-=\frac{2\pi i}{k}-\frac{\pi}{k},\quad\quad\quad\ \ \mathbf{(type\, 1)} $$
    and the second type is
    $$\varphi_i^+=\frac{2\pi i}{k}-\frac{3\pi}{2k}, \quad \quad \varphi_i^-=\frac{2\pi i}{k}-\frac{\pi}{2k}.\quad\quad\quad \mathbf{(type\, 2)} $$
    Then the point vortex solution can be written as
    $$\omega^*(\boldsymbol z)=\kappa\sum\limits_{i=1}^{k}\boldsymbol \delta_{\boldsymbol z_i^+}-\kappa\sum\limits_{i=1}^{k}\boldsymbol \delta_{\boldsymbol z_i^-},$$
    whose stream function is 
    $$\psi^*(\boldsymbol z)=\kappa\sum\limits_{i=1}^{k}G(\boldsymbol z,\boldsymbol z_i^+)-\kappa\sum\limits_{i=1}^{k}G(\boldsymbol z,\boldsymbol z_i^-).$$    
    The point vortex model constitutes a Hamilton system, and the uniform traveling angular velocity of the vortex street is given by
    \begin{equation*}
    	W^*=\frac{\kappa}{\sin\theta_0}\sum\limits_{i=2}^k\partial_\theta G(\boldsymbol z_1^+,\boldsymbol z_i^+)+\frac{\kappa}{\sin\theta_0}\partial_\theta H(\boldsymbol z_1^+,\boldsymbol z_1^+)-\frac{\kappa}{\sin\theta_0}\sum\limits_{i=1}^k\partial_\theta G(\boldsymbol z_1^+,\boldsymbol z_i^-).
    \end{equation*}  
    See \cite{Drit}. Note that the vortex shedding can be put in a stationary frame with sphere rotation speed $\gamma=W^*$.
    
    We construct a series of vorticity solutions $\{\omega_\varepsilon\}$, tending to $\omega^*$ but with a more regular form than the Dirac measures.
    This procedure is hence regarded as a regularization for point vortices. In \cite{CQZZ}, the authors have considered the $C^1$ regular solutions 
    for the whole plane $\mathbb R^2$. However, we will consider the solution of patch type in this paper, whose initial data is the characteristic 
    function of the $2k$ domain, 
    \begin{equation*}
    	\omega_\varepsilon(\boldsymbol z)=\frac{1}{\varepsilon^2}\sum\limits_{i=1}^k\boldsymbol 1_{\Omega_{i,\varepsilon}^+}-\frac{1}{\varepsilon^2}\sum\limits_{i=1}^k\boldsymbol 1_{\Omega_{i,\varepsilon}^-}
    \end{equation*}
    with $\Omega_{i,\varepsilon}^\pm$ are small areas centered at $\boldsymbol z_i^\pm$,  and $\varepsilon$ is a small scale parameter. 
    As $\varepsilon\to 0^+$, $\omega_*$  tends to the point vortex solution $\omega_*$. 
    Actually, the stream function $\psi_\varepsilon=(-\Delta_{\mathbb S^2})^{-1}\omega_\varepsilon$ satisfies the following equation
    \begin{equation}\label{1-6}
    	(-\Delta_{\mathbb S^2})\psi_\varepsilon=\frac{1}{\varepsilon^2}\boldsymbol1_{\{\psi_\varepsilon+W_\varepsilon\cos\theta>\mu_\varepsilon\}}-\frac{1}{\varepsilon^2}\boldsymbol1_{\{-\psi_\varepsilon-W_\varepsilon\cos\theta>\mu_\varepsilon\}}
    \end{equation}
    by \eqref{1-3}, where 
    $$\{\psi_\varepsilon+W_\varepsilon\cos\theta>\mu_\varepsilon\}=\bigcup_{i=1}^k\Omega_{i,\varepsilon}^+, \quad\quad \{-\psi_\varepsilon-W_\varepsilon\cos\theta>\mu_\varepsilon\}=\bigcup_{i=1}^k\Omega_{i,\varepsilon}^-$$ 
   are the positive and negative vorticity sets separately, $W_\varepsilon$ is the traveling angular velocity, and $\mu_\varepsilon$ is a flux constant. 
   
   To simplify the problem, we assume $\psi_\varepsilon$ and $\omega_\varepsilon$ are of $k$-fold symmetry. That is to say, for any $1\leq i \leq k$,
   $$\psi_\varepsilon\left(\theta+\frac{2\pi i}{k},\varphi\right)=\psi_\varepsilon(\theta,\varphi) \quad \mathrm{and} \quad \omega_\varepsilon\left(\theta+\frac{2\pi i}{k},\varphi\right)=\omega_\varepsilon(\theta,\varphi).$$
   Moreover, we require that $\Omega_{i,\varepsilon}^+$ and $\Omega_{i,\varepsilon}^-$ are symmetric with respect to $(\frac{\pi}{2},\frac{2\pi i}{k}-\frac{\pi}{k})$, 
   so that they have the same shape. Then we can only deal with $\Omega_{1,\varepsilon}^+$ in the construction. Having done these preparations, 
   we have the main theorem in our paper, which is stated as follows.
    \begin{theorem}\label{thm1}
   	Suppose $\boldsymbol z_i^\pm$ satisfy $\mathbf{(type\, 1)}$ or $\mathbf{(type\, 2)}$ with $0<\theta_0\le\pi/2$. Then there exists an $\varepsilon_0>0$ small, such that for each $\varepsilon\in (0,\varepsilon_0]$, \eqref{1-6} has a solution $(\psi_\varepsilon, W_\varepsilon)$. Let $\omega_\varepsilon=(-\Delta_{\mathbb S^2})\psi_\varepsilon$ be the vorticity function. We have the following asymptotic estimates:
   	\begin{itemize}
   		\item[(i)] One has
   		$$\omega_\varepsilon\rightharpoonup \kappa\sum\limits_{i=1}^{k}\boldsymbol \delta_{\boldsymbol z_i^+}-\kappa\sum\limits_{i=1}^{k}\boldsymbol \delta_{\boldsymbol z_i^-} \quad as \ \varepsilon\to 0^+,$$
   		where the convergence is in the sense of measures, and $\kappa$ is the circulation for each vortex.
   		\item[(ii)] The boundary of vorticity set $\Omega_{i,\varepsilon}^\pm$ is a $C^1$ close curve, which is parameterized as
   		$$\partial \Omega_{i,\varepsilon}^\pm=\left\{\boldsymbol z_i^\pm+\big[\sqrt{\kappa/\pi}\varepsilon+o(\varepsilon)\big](\cos\xi,\sin^{-1}\theta_0\sin\xi) \mid \xi\in[0, 2\pi) \right\}.$$
   		\item[(iii)] As $\varepsilon\to 0^+$, the traveling angular velocities are given by
   		\begin{align*}
   			W_\varepsilon^{(1)}\to&\frac{\kappa}{\sin\theta_0}\sum\limits_{i=2}^k\partial_\theta G\left(\theta_0,\frac{\pi}{k},\theta_0,\frac{2\pi i}{k}-\frac{\pi}{k}\right)+\frac{\kappa}{\sin\theta_0}\partial_\theta H\left(\theta_0,\frac{\pi}{k},\theta_0,\frac{\pi }{k}\right)\\
   			&-\frac{\kappa}{\sin\theta_0}\sum\limits_{i=1}^k\partial_\theta G\left(\theta_0,\frac{\pi}{k},\pi-\theta_0,\frac{2\pi i}{k}-\frac{\pi}{k}\right)
   		\end{align*}
   		for $(\mathbf{type\, 1})$ vortex street, and
   		\begin{align*}
   			W_\varepsilon^{(2)}\to&\frac{\kappa}{\sin\theta_0}\sum\limits_{i=2}^k\partial_\theta G\left(\theta_0,\frac{\pi}{2k},\theta_0,\frac{2\pi i}{k}-\frac{3\pi}{2k}\right)+\frac{\kappa}{\sin\theta_0}\partial_\theta H\left(\theta_0,\frac{\pi}{2k},\theta_0,\frac{\pi i}{2k}\right)\\
   			&-\frac{\kappa}{\sin\theta_0}\sum\limits_{i=1}^k\partial_\theta G\left(\theta_0,\frac{\pi}{2k},\pi-\theta_0,\frac{2\pi i}{k}-\frac{\pi}{2k}\right)
   		\end{align*}
   	    for $(\mathbf{type\, 2})$ vortex street.
   	\end{itemize}
   \end{theorem}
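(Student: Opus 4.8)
The plan is to view \eqref{1-6} as an $\varepsilon$-perturbation of the flat vortex-patch problem $-\Delta U=\mathbf 1_{B_r}$ on a disk $B_r$ of area $\kappa$, and to close the construction by a Lyapunov--Schmidt reduction in which the single scalar $W_\varepsilon$ eliminates the unique degenerate direction of the linearized operator. Near $\boldsymbol z_1^+=(\theta_0,\varphi_1^+)$ I introduce the rescaled coordinates $y=(y_1,y_2)=\big((\theta-\theta_0)/\varepsilon,\ \sin\theta_0(\varphi-\varphi_1^+)/\varepsilon\big)$, in which $\mathbf g_{\mathbb S^2}$ is Euclidean up to $O(\varepsilon)$ and $d\boldsymbol\sigma=\varepsilon^2\,dy\,(1+O(\varepsilon))$, and I look for $\Omega_{1,\varepsilon}^+$ bounded by $\{\boldsymbol z_1^++\varepsilon(\rho(\xi)\cos\xi,\ \sin^{-1}\theta_0\,\rho(\xi)\sin\xi)\colon\xi\in[0,2\pi)\}$ with $\rho=\sqrt{\kappa/\pi}+\phi$, where $\phi$ is even in $\xi$ — this parity being forced by the reflection symmetry of each of the two street configurations about the longitude through $\boldsymbol z_1^+$ — and is normalized so that $\Omega_{1,\varepsilon}^+$ has Riemannian area $\kappa\varepsilon^2$ and barycenter $\boldsymbol z_1^+$; thus each patch has mass $\kappa$ and $\phi$ has neither a constant nor a $\cos\xi$ Fourier mode at leading order. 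The remaining $2k-1$ patches are prescribed by the imposed $k$-fold rotational and equatorial (with sign change) symmetries, so $\omega_\varepsilon$ and $\psi_\varepsilon=(-\Delta_{\mathbb S^2})^{-1}\omega_\varepsilon$ are determined by $(\phi,W_\varepsilon)$ up to the flux constant $\mu_\varepsilon$. Writing $\psi_\varepsilon$ via the Green representation and splitting $G=\Gamma+H$ as in \eqref{1-1}, the self-interaction of the patch near $\boldsymbol z_1^+$ equals, as a function of $y$, the radially symmetric Newtonian potential of the disk of area $\kappa$ plus the $y$-independent constant $-\tfrac{\kappa}{2\pi}\ln\varepsilon$ and an $O(\varepsilon)$ error, while the $H$-part of the self-interaction together with the far $2k-1$ patches contribute a $C^1$ field whose value at $\boldsymbol z_1^+$ is constant and whose gradient there is the gradient of the Kirchhoff--Routh function. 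Restricting \eqref{1-6} to $\partial\Omega_{1,\varepsilon}^+$, the construction amounts to solving
\[
F(\phi,W_\varepsilon,\varepsilon):=\big(\psi_\varepsilon+W_\varepsilon\cos\theta-\mu_\varepsilon\big)\big|_{\partial\Omega_{1,\varepsilon}^+}=0 ,
\]
a $C^{1,\alpha}(\mathbb S^1)$-valued equation whose constant mode merely fixes $\mu_\varepsilon$ and whose $\cos\xi$ mode will be balanced by $W_\varepsilon$.

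I would then expand $F(\phi,W,\varepsilon)=F_0(\phi)+\varepsilon F_1(\phi,W)+o(\varepsilon)$ in Hölder norm, where $F_0(0)=0$ because $B_r$ solves the flat problem, $L:=D_\phi F_0(0)$ is the linearization of the classical one-phase vortex-patch problem at the disk, and $F_1(0,W)$ equals on the boundary $r\,c_1(W)\cos\xi$ with
\[
c_1(W)=\kappa\,\partial_\theta H(\boldsymbol z_1^+,\boldsymbol z_1^+)+\kappa\!\sum_{j\ge2}\partial_\theta G(\boldsymbol z_1^+,\boldsymbol z_j^+)-\kappa\!\sum_{j=1}^{k}\partial_\theta G(\boldsymbol z_1^+,\boldsymbol z_j^-)-W\sin\theta_0 ,
\]
the $\sin\xi$ contribution being absent because the $\boldsymbol e_\varphi$-component of the Kirchhoff--Routh gradient vanishes at the street configuration — which is exactly the statement that the point-vortex street is a relative equilibrium. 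The crucial linear input is that $L$ acts on $L^2(\mathbb S^1)$ as a real Fourier multiplier whose symbol vanishes precisely at $n=\pm1$; hence $L$ is self-adjoint with $\ker L=\mathrm{span}\{\cos\xi\}$ — the $\theta$-translation of the disk, which is exactly the mode removed by the barycenter normalization — so that $L$ restricted to the normalized class $\{\cos n\xi:n\ge2\}$ is an isomorphism onto that same class. Establishing this symbol is a Hadamard-variation computation for the Newtonian potential of $B_r$ under boundary perturbations, the delicate point being that the potential is only $C^{1,\alpha}$, not $C^2$, across $\partial B_r$.

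Let $\Pi$ be the $L^2(\mathbb S^1)$-projection onto $\overline{\mathrm{span}}\{\cos n\xi:n\ge2\}$. Since $\Pi\,D_\phi F(0,W,0)=\Pi L$ is an isomorphism of that subspace, the implicit function theorem solves $\Pi F(\phi,W,\varepsilon)=0$ for $\phi=\phi(W,\varepsilon)$ with $\phi(W,\varepsilon)=o(1)$ in $C^{1,\alpha}$ as $\varepsilon\to0^+$, uniformly for $W$ near
\[
W^*:=\frac{\kappa}{\sin\theta_0}\Big[\partial_\theta H(\boldsymbol z_1^+,\boldsymbol z_1^+)+\sum_{j\ge2}\partial_\theta G(\boldsymbol z_1^+,\boldsymbol z_j^+)-\sum_{j=1}^{k}\partial_\theta G(\boldsymbol z_1^+,\boldsymbol z_j^-)\Big] .
\]
Substituting back, $F(\phi(W,\varepsilon),W,\varepsilon)$ lies entirely along $\cos\xi$, and the remaining scalar equation $g(W,\varepsilon):=\langle F(\phi(W,\varepsilon),W,\varepsilon),\cos\xi\rangle=0$ satisfies, by the expansion above and $\langle L\phi,\cos\xi\rangle=0$, the identity $g(W,\varepsilon)=\varepsilon\,\pi r\,c_1(W)+o(\varepsilon)$; hence $\partial_W g=-\varepsilon\,\pi r\,\sin\theta_0+o(\varepsilon)\neq0$ since $0<\theta_0\le\pi/2$, and the implicit function theorem in $W$ gives $W_\varepsilon$ with $c_1(W_\varepsilon)=o(1)$, i.e. $W_\varepsilon\to W^*$. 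Inserting the explicit positions $\boldsymbol z_j^\pm$ of the two types turns $W^*$ into the limits $W_\varepsilon^{(1)}$, $W_\varepsilon^{(2)}$ of part (iii). The resulting pair $(\psi_\varepsilon,W_\varepsilon)$ solves \eqref{1-6} — the two super-level sets there are exactly the $2k$ patches because $\mu_\varepsilon\to+\infty$ while $\psi_\varepsilon+W_\varepsilon\cos\theta$ stays bounded away from the vortices — which yields (i), since each patch carries mass $\kappa$ and collapses to $\boldsymbol z_i^\pm$, and (ii), since $\varepsilon\rho(\xi)=\sqrt{\kappa/\pi}\,\varepsilon+\varepsilon\phi(\xi)=\sqrt{\kappa/\pi}\,\varepsilon+o(\varepsilon)$ with $\phi\in C^1(\mathbb S^1)$; the remaining patches follow by symmetry.

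\textbf{Main obstacle.} I expect the hard part to be the linear analysis of $L$: proving that on $\mathbb S^2$ — where the spherical metric perturbs the flat Laplacian and the $2k$ patches are coupled genuinely nonlocally through $G$ — the linearized operator of the full problem is invertible on $\overline{\mathrm{span}}\{\cos n\xi:n\ge2\}$ \emph{uniformly in $\varepsilon$} in the appropriate Hölder spaces, which requires controlling the $O(\varepsilon)$ and $O(\varepsilon^2\ln\varepsilon)$ remainders, handling the limited ($C^{1,\alpha}$, not $C^2$) regularity of patch potentials across the free boundary, and excluding spurious small eigenvalues. A secondary point is verifying the nondegeneracy $\partial_W c_1\equiv-\sin\theta_0\neq0$ together with the vanishing of the $\sin\xi$-coefficient of $F_1(0,W)$, but that reduces to an explicit symmetry computation for the Kirchhoff--Routh gradient at the two street configurations.
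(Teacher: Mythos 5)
Your proposal is sound in its overall architecture and arrives at exactly the same reduced equation and the same limiting speed $W^*$ as the paper, but it implements the Lyapunov--Schmidt reduction in a genuinely different functional framework. You formulate the problem as a free-boundary/contour equation: the unknown is the radial perturbation $\rho(\xi)=\sqrt{\kappa/\pi}+\phi(\xi)$ of the patch boundary, the equation is $\big(\psi_\varepsilon+W\cos\theta-\mu_\varepsilon\big)\big|_{\partial\Omega_{1,\varepsilon}^+}=0$ posed on $C^{1,\alpha}(\mathbb S^1)$, and the linearization at the disk is an explicit Fourier multiplier whose symbol vanishes only at $n=\pm1$; the $\sin\xi$ mode is killed by the reflection symmetry of both street types and the $\cos\xi$ mode by adjusting $W$. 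The paper instead perturbs the \emph{stream function}: it builds an approximate solution from the Rankine profile ($V^\pm_{i,\varepsilon}+R^\pm_{i,\varepsilon}$), writes $\psi_\varepsilon=\Psi_\varepsilon+\phi_\varepsilon$ with $\phi_\varepsilon$ a function on the sphere, and studies the operator $\mathbb L_\varepsilon\phi=-\Delta_{\mathbb S^2}\phi-\tfrac{2}{s_\varepsilon}\phi\,\boldsymbol\delta_{\mathcal C_{s_\varepsilon}}$, whose one-dimensional approximate kernel $Z_\varepsilon\sim\partial_\theta U^\pm_{i,\varepsilon}$ is controlled by the nondegeneracy result of Cao--Peng--Yan (Theorem \ref{thm3}); the projected problem is solved by a coercive estimate in $\|\cdot\|_*$ and $W^{-1,p}$ (a blow-up/compactness argument), the Fredholm alternative, and a contraction mapping, and then $\Lambda=0$ is enforced by the choice of $W_\varepsilon$ (Lemma \ref{lem2-6}). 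Your route buys an explicit, diagonal leading-order linear operator and a clean implicit-function-theorem statement; the paper's route avoids the Hölder bookkeeping for boundary integral operators at the price of the measure-valued linearization and the contradiction/blow-up argument of Lemma \ref{lem2-2}. Both hinge on the same two facts: the kernel of the flat Rankine linearization consists only of translations, and the coefficient of the surviving mode is the $\theta$-component of the Kirchhoff--Routh gradient minus $W\sin\theta_0$.

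Two points in your plan are asserted rather than proved and are where the real work lies. First, the uniform-in-$\varepsilon$ invertibility of $\Pi\,D_\phi F$ on $\overline{\mathrm{span}}\{\cos n\xi: n\ge 2\}$ in $C^{1,\alpha}$: you correctly flag this, and in the contour framework it requires verifying that $F$ is $C^1$ between the stated Hölder spaces (the restriction of the patch potential to the boundary gains a derivative, so this works, but it must be checked, e.g.\ in the style of Hmidi--Mateu--Verdera) and that the $O(\varepsilon)$ metric corrections and the nonlocal coupling to the other $2k-1$ patches do not produce small eigenvalues. Second, the passage from "the boundary is a level curve" back to "the super-level set is exactly the patch" needs the observation that $|\nabla\psi_\varepsilon|\sim\varepsilon^{-1}$ transversally to $\partial\Omega_{1,\varepsilon}^+$ (this is the role of $\beta_\varepsilon$ in \eqref{2-1} and of Lemma \ref{lem2-4} in the paper), not merely that $\mu_\varepsilon\to+\infty$ away from the vortices. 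Neither point is a wrong turn; both are resolved by standard arguments in the literature the paper cites.
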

   
   \begin{remark}
   	With the existence result in Theorem \ref{thm1}, we actually obtain a class of non-localized solutions to \eqref{1-2} by Lemma \ref{lem1-1}. If we let $\tilde\omega_\ep=\omega_\varepsilon+2\gamma\cos\theta$ and $\tilde{\boldsymbol v}=\nabla^\perp_{\mathbb S^2}\psi_\varepsilon+(0,\gamma\sin\theta)$, then we give a traveling wave solution at the angular velocity $W_\gamma=W-\gamma$ on a sphere with $\gamma$ the rotation speed.
   \end{remark}
   
   The von K\'arm\'an point vortex street is only a special case of steady or traveling point vortex system on $\mathbb S^2$. For other vivid examples, 
   we refer to \cite{New}, where different kinds of point vortex equilibria are given. To get a step further for general cases, we consider \eqref{1-4}, and let 
   \begin{equation*}
   	  \omega^*(\boldsymbol z)=\sum\limits_{m=1}^{j}\kappa_m^+\boldsymbol \delta_{\boldsymbol z_m^+}-\sum\limits_{n=1}^{k}\kappa_n^-\boldsymbol \delta_{\boldsymbol z_n^-}+2\gamma\cos\theta
   \end{equation*}
   be the limit vortex function. In this situation, $\omega^*(\boldsymbol z)$ is composed of singular point vortex part and regular non-localized part $2\gamma\cos\theta$, which is no longer a point vortex system but a vortex-wave system. However, in view of Lemma \ref{lem1-1}, it can be regarded as equal to a point vortex system
   \begin{equation*}
   	\hat\omega^*(\boldsymbol z)=\sum\limits_{m=1}^{j}\kappa_m^+\boldsymbol \delta_{\boldsymbol z_m^+}-\sum\limits_{n=1}^{k}\kappa_n^-\boldsymbol \delta_{\boldsymbol z_n^-}
   \end{equation*}
   with traveling speed $W=\gamma$ on a stationary sphere. 
   
   For $\omega^*(\boldsymbol z)$, the Gauss constraint implies that
   $$\sum\limits_{m=1}^{j}\kappa_m^+=\sum\limits_{n=1}^{k}\kappa_n^-.$$
   By the dynamics of point vortices on $\mathbb S^2$ \cite{Drit}, the coordinates $(\boldsymbol z_1^+,\cdots,\boldsymbol z_j^+, \boldsymbol z_1^-,\cdots, \boldsymbol z_k^-)$ with $\boldsymbol z_m^+=(\phi_m^+,\theta_m^+)$, $\boldsymbol z_n^-=(\phi_n^-,\theta_n^-)$ ($\theta_m^+,\theta_n^-\neq 0,\pi$) should be on a critical point of the Kirchhoff--Routh function
   \begin{equation}\label{1-7}
   	\begin{split}
   	\mathcal K_{k+j}&(\boldsymbol z_1^+,\cdots,\boldsymbol z_j^+,\boldsymbol z_1^-,\cdots,\boldsymbol z_k^-)\\
   	&=\frac{1}{2}\sum_{m,l=1,m\neq l}^j\kappa_m^+\kappa_l^+G(\boldsymbol z_m^+,\boldsymbol z_l^+)+\frac{1}{2}\sum_{l,n=1,l\neq n}^k\kappa_l^-\kappa_n^-G(\boldsymbol z_l^-,\boldsymbol z_n^-)\\
   	&\quad+\frac{1}{2}\sum_{m=1}^k(\kappa_m^+)^2H(\boldsymbol z_m^+,\boldsymbol z_m^+)+\frac{1}{2}\sum_{n=1}^j(\kappa_n^-)^2H(\boldsymbol z_n^-,\boldsymbol z_n^-)\\
   	&\quad-\sum_{m=1}^j\sum_{n=1}^k \kappa_m^+\kappa_n^- G(\boldsymbol z_m^+,\boldsymbol z_n^-)-\gamma\sum_{m=1}^j\kappa_m^+\cos \theta_m^++\gamma\sum_{n=1}^k\kappa_n^-\cos \theta_n^-.
   	\end{split}
   \end{equation}
   Now we are going to construct a series of vortex solutions to regularize $\omega^*(\boldsymbol z)$. According to Lemma \ref{lem1-1}, if we let $\psi_\ep=\psi-\gamma\cos\theta$ in \eqref{1-4}, then $\psi_\varepsilon$ should satisfy the equation
   \begin{equation}\label{1-8}
   	(-\Delta_{\mathbb S^2})\psi_\varepsilon=\frac{1}{\varepsilon^2}\sum_{m=1}^j\boldsymbol1_{B_\delta(\boldsymbol z_m^+)}\boldsymbol1_{\{\psi_\varepsilon+\gamma\cos\theta>\mu_{m,\varepsilon}^+\}}-\frac{1}{\varepsilon^2}\sum_{n=1}^k\boldsymbol1_{B_\delta(\boldsymbol z_n^-)}\boldsymbol1_{\{-\psi_\varepsilon-\gamma\cos\theta>\mu_{n,\varepsilon}^-\}},
   \end{equation}
   where $\delta>0$ is a small constant, $\mu_{m,\varepsilon}^+,\mu_{n,\varepsilon}^-$ are flux constants to be prescribed, and
   $$\{\psi_\varepsilon+\gamma\cos\theta>\mu_{m,\varepsilon}^+\}=\Omega_{m,\varepsilon}^+, \quad\quad \{-\psi_\varepsilon-\gamma\cos\theta>\mu_{n,\varepsilon}^-\}=\Omega_{n,\varepsilon}^-$$ 
   are the positive and negative level sets. The following theorem gives the regularization for singular vortex $\omega^*(\boldsymbol z)=\sum\limits_{m=1}^{j}\kappa_m^+\boldsymbol \delta_{\boldsymbol z_m^+}-\sum\limits_{n=1}^{k}\kappa_n^-\boldsymbol \delta_{\boldsymbol z_n^-}+2\gamma\cos\theta$.
   
    \begin{theorem}\label{thm2}
    	For any nondegenerate critical point $(\boldsymbol z_1^+,\cdots,\boldsymbol z_j^+, \boldsymbol z_1^-,\cdots, \boldsymbol z_k^-)$  of Kirchhoff--Routh function $\mathcal K_{k+j}$ defined by \eqref{1-7}, there exists an $\varepsilon_0>0$ small, such that for each $\varepsilon\in (0,\varepsilon_0]$, \eqref{1-8} has a solution $\psi_\varepsilon$. Let $\omega_\varepsilon=(-\Delta_{\mathbb S^2})\psi_\varepsilon+2\gamma\cos\theta$ be the vorticity function. We have the following asymptotic estimates:
    	\begin{itemize}
    		\item[(i)] One has
    		$$\omega_\varepsilon\rightharpoonup \sum\limits_{m=1}^{j}\kappa_m^+\boldsymbol \delta_{\boldsymbol z_m^+}-\sum\limits_{n=1}^{k}\kappa_n^-\boldsymbol \delta_{\boldsymbol z_n^-}+2\gamma\cos\theta \quad as \ \varepsilon\to 0^+,$$
    		where the convergence is in the sense of measures, and $\kappa_l^\pm$ is the circulation for each point vortex.
    		\item[(ii)] The boundaries of level sets $\Omega_{m,\varepsilon}^+$, $\Omega_{n,\varepsilon}^-$, are $C^1$ close curves, which are parameterized as
    		$$\partial \Omega_{m,\varepsilon}^+=\left\{\boldsymbol z_{m,\varepsilon}^++\big[\sqrt{\kappa_m^+/\pi}\varepsilon+o(\varepsilon)\big](\cos\xi,\sin^{-1}\theta_{m,\ep}^+\sin\xi) \mid \xi\in[0, 2\pi) \right\},$$
    		and
    		$$\partial \Omega_{n,\varepsilon}^-=\left\{\boldsymbol z_{n,\varepsilon}^-+\big[\sqrt{\kappa_n^-/\pi}\varepsilon+o(\varepsilon)\big](\cos\xi,\sin^{-1}\theta_{n,\ep}^-\sin\xi) \mid \xi\in[0, 2\pi) \right\}$$
    		with
    		$$\left(\boldsymbol z_{1,\varepsilon}^+,\cdots,\boldsymbol z_{j,\varepsilon}^+, \boldsymbol z_{1,\varepsilon}^-,\cdots, \boldsymbol z_{k,\varepsilon}^-\right)\to \left(\boldsymbol z_1^+,\cdots,\boldsymbol z_j^+, \boldsymbol z_1^-,\cdots, \boldsymbol z_k^-\right)\quad as \ \varepsilon\to 0^+.$$
    	\end{itemize}
    \end{theorem}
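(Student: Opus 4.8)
I would follow the same Lyapunov--Schmidt scheme as for Theorem~\ref{thm1}, the difference being that now the $2(k+j)$ colatitude--longitude coordinates of the patch centres — rather than a traveling speed — are the free parameters used to annihilate the degenerate directions of the linearisation, while the flux constants $\mu_{m,\varepsilon}^\pm$ are slaved to them through a mass normalisation. \emph{Setup and approximate solution.} Work in the chart $C_1$. Near a prospective centre $\boldsymbol a=(\theta_\ast,\varphi_\ast)$ pass to the flattened coordinates $\zeta=(\theta-\theta_\ast,\,\sin\theta_\ast(\varphi-\varphi_\ast))$, in which $\Delta_{\mathbb S^2}$ agrees with the flat Laplacian up to lower order terms; this is the source of the ellipse $(\cos\xi,\sin^{-1}\theta_\ast\sin\xi)$ in the statement. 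After the scaling $\zeta=\varepsilon x$ the natural model on $\mathbb R^2$ is $-\Delta W=\mathbf 1_{\{W>0\}}$, whose radial solution of prescribed mass $\kappa$ is a paraboloid on $B_{\sqrt{\kappa/\pi}}$ continued by a logarithm outside. For $\boldsymbol a=(\boldsymbol z_1^+,\dots,\boldsymbol z_k^-)$ near the nondegenerate critical point of $\mathcal K_{k+j}$ in \eqref{1-7}, build $\Psi_{\varepsilon,\boldsymbol a}$ by superposing, around each $\boldsymbol z_m^\pm$, the corresponding rescaled core profile and the outer field generated at $\boldsymbol z_m^\pm$ by the other cores, by the regular part $H$, and by $\gamma\cos\theta$; fix $\mu_{m,\varepsilon}^\pm$ by requiring the mass of $\Omega_{m,\varepsilon}^\pm$ to be $\kappa_m^\pm+o(1)$. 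A direct computation then shows $\Psi_{\varepsilon,\boldsymbol a}$ solves \eqref{1-8} up to an error small in a weighted norm whose leading part involves $\nabla_{\boldsymbol z_m^\pm}\mathcal K_{k+j}(\boldsymbol a)$ and the anisotropic second order part of $\mathbf g_{\mathbb S^2}$.

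\emph{Linearisation and infinite-dimensional reduction.} Since the nonlinearity is a characteristic function, recast \eqref{1-8} as an equation $\mathcal F_\varepsilon(\boldsymbol a,\boldsymbol\phi)=0$ for the normal graph functions $\boldsymbol\phi=(\phi_m^\pm)$ describing $\partial\Omega_{m,\varepsilon}^\pm$ over the model circles, in a product of $C^{1,\alpha}$ spaces of mean-free functions on the circle. The operator $D_{\boldsymbol\phi}\mathcal F_\varepsilon$ is, after rescaling, a compact perturbation of the model operator obtained from $-\Delta$ minus a single-layer term on $\partial B_{\sqrt{\kappa_m^\pm/\pi}}$; its kernel on this space is spanned by the $2(k+j)$ translation modes $\cos\xi,\sin\xi$ of the individual disks, and it is an isomorphism onto the $L^2$-orthogonal complement of those modes with bound uniform in $\varepsilon$. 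Granting this, for every $\boldsymbol a$ in a fixed neighbourhood of the critical point the projected equation $\Pi^\perp\mathcal F_\varepsilon(\boldsymbol a,\boldsymbol\phi)=0$ is solved by the contraction mapping principle, producing $\boldsymbol\phi_{\varepsilon,\boldsymbol a}$ of size $o(1)$, of class $C^1$ in $\boldsymbol a$.

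\emph{Finite-dimensional reduction and conclusion.} Substituting $\boldsymbol\phi_{\varepsilon,\boldsymbol a}$ leaves the $2(k+j)$ scalar equations obtained by pairing with the translation modes; by the error expansion above these reduce, after division by a common nonzero factor, to $\nabla_{\boldsymbol a}\mathcal K_{k+j}(\boldsymbol a)+o(1)=0$ with the remainder $C^1$-small. Because $(\boldsymbol z_1^+,\dots,\boldsymbol z_k^-)$ is a nondegenerate critical point of $\mathcal K_{k+j}$, the implicit function theorem (or a Brouwer degree argument) yields a zero $\boldsymbol a_\varepsilon\to(\boldsymbol z_1^+,\dots,\boldsymbol z_k^-)$; then $\psi_\varepsilon$, assembled from $\Psi_{\varepsilon,\boldsymbol a_\varepsilon}$ and the correction $\boldsymbol\phi_{\varepsilon,\boldsymbol a_\varepsilon}$, solves \eqref{1-8}, and $\omega_\varepsilon=(-\Delta_{\mathbb S^2})\psi_\varepsilon+2\gamma\cos\theta$. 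Part (ii) is read off the construction: in the flattened rescaled variables the patch is the disk of radius $\sqrt{\kappa_m^\pm/\pi}$ perturbed by $o(1)$, so undoing the scaling gives the stated ellipse with $o(\varepsilon)$ error and $\boldsymbol z_{m,\varepsilon}^\pm=\boldsymbol a_{\varepsilon,m}^\pm$. Part (i) follows since $|\Omega_{m,\varepsilon}^\pm|=\kappa_m^\pm\varepsilon^2+o(\varepsilon^2)$ and $\Omega_{m,\varepsilon}^\pm$ collapses to $\boldsymbol z_m^\pm$, hence $\varepsilon^{-2}\mathbf 1_{\Omega_{m,\varepsilon}^\pm}\rightharpoonup\kappa_m^\pm\boldsymbol\delta_{\boldsymbol z_m^\pm}$ in the sense of measures.

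\emph{Main obstacle.} I expect the crux to be the linear theory: since $\mathbf 1_{\{u>\mu\}}$ is not differentiable, ``linearising'' means working with an operator on boundary perturbations, and one must establish its Fredholm/isomorphism property together with the uniform-in-$\varepsilon$ lower bound transverse to the translation kernel of the $\mathbb R^2$ model. A second delicate point is the geometric bookkeeping: verifying that the anisotropic Hessian of $\mathbf g_{\mathbb S^2}$ enters only through the $\sin^{-1}\theta_\ast$ stretching and does not spoil the reduction, while the reduced map still detects exactly the critical points of $\mathcal K_{k+j}$ — including the $-\gamma\kappa_m^+\cos\theta_m^+$ and $\gamma\kappa_n^-\cos\theta_n^-$ contributions coming from the sphere's rotation via Lemma~\ref{lem1-1}.
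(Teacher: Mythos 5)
Your proposal is correct in its overall architecture and coincides with the paper's strategy: an approximate solution assembled from rescaled Rankine profiles plus the regular parts of the Green function, a Lyapunov--Schmidt reduction whose $(2j+2k)$-dimensional approximate kernel consists of the translation modes of the individual patches, reduced equations that are (up to positive factors and $o_\varepsilon(1)$) the components of $\nabla\mathcal K_{k+j}$ from \eqref{1-7}, and the nondegeneracy hypothesis to close the argument. The one genuine divergence is in the linear theory. You make the patch boundaries the unknowns, recasting \eqref{1-8} as a contour-type equation for normal graphs in $C^{1,\alpha}$ over model circles and inverting the linearisation transverse to the $\cos\xi,\sin\xi$ modes, in the spirit of \cite{Hmi, HM}. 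The paper instead keeps the stream-function perturbation $\phi_\varepsilon$ as the unknown: the linearised operator is $\mathbb L_\varepsilon\phi=(-\Delta_{\mathbb S^2})\phi-\sum\frac{2}{s_\varepsilon}\phi\,\boldsymbol\delta_{\mathcal C_{s_\varepsilon}}$, with the single-layer term arising from differentiating the indicator of the level set; the uniform invertibility transverse to the kernel is the coercive estimate of Lemmas \ref{lem2-2}/\ref{lem3-1}, proved by a blow-up argument in the mixed norm $\|\cdot\|_*+\varepsilon^{1-2/p}\|\nabla\cdot\|_{L^p}$ that invokes the rigidity Theorem \ref{thm3} from \cite{Cao3} (kernel of the model operator equals the span of the translations of the Rankine vortex); and the free boundary is recovered a posteriori from the implicit-function-theorem expansion of the level set in Lemma \ref{lem2-4}, which is also what produces the ellipse $(\cos\xi,\sin^{-1}\theta_{m,\varepsilon}^\pm\sin\xi)$ in part (ii). Both routes rest on exactly the same model rigidity and arrive at the same reduced system (Lemma \ref{lem3-2}); yours localises the analysis on the contours at the price of a more involved nonlinear functional, while the paper's keeps the nonlinearity as an indicator function and pays with $W^{-1,p}$ bookkeeping for measures supported on circles. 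Your final step, solving $\nabla_{\boldsymbol a}\mathcal K_{k+j}(\boldsymbol a)+o(1)=0$ by nondegeneracy, is exactly how the paper concludes; since the paper only establishes $C^0$-smallness of the remainder in $\boldsymbol a$, the degree-theoretic variant you mention is the safer of your two options unless you additionally prove the remainder is $C^1$-small.
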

    
   In Theorem \ref{thm1}, we fix the location $\boldsymbol z_i^\pm$ of each vortex patch and adjust the traveling angular speed 
   $W_\varepsilon$. While in Theorem \ref{thm2}, we fix the rotating speed of the sphere $\gamma$ and find the proper condition 
   for the location vector $(\boldsymbol z_{1,\varepsilon}^+,\cdots,\boldsymbol z_{j,\varepsilon}^+, \boldsymbol z_{1,\varepsilon}^-,\cdots, \boldsymbol z_{k,\varepsilon}^-)$.
   The aim of these two procedures is similar, which is to eliminate the degenerate direction of a linearized operator and make $\psi_\varepsilon$ a solution
   to the primal problem \eqref{1-6} or \eqref{1-8}.
    
    \subsection{Organization of the paper}
    
    This paper is organized as follows. In Section \ref{sec2}, we obtain a series of patch solutions to regularize the K\'arm\'an point vortex street on $\mathbb S^2$. 
    To this purpose, we first construct suitable approximate solutions via the stream function of the well-known Rankine vortex and transform the problem into 
    a semilinear equation for the perturbation function. Then, we solve the projective problem module one-dimensional kernel of a linearized operator. 
    The proof is finished by finding the condition for traveling speed $W_\varepsilon$ and making the projection operator equal identity.
    This method is known as the Lyapunov--Schmidt reduction procedure. The regularization for a general steady vortex-wave system on $\mathbb S^2$ is 
    discussed in Section \ref{sec3}, where a $(2j+2k)$-dimensional problem is solved to determine the vortex location 
    $(\boldsymbol z_{1,\varepsilon}^+,\cdots,\boldsymbol z_{j,\varepsilon}^+, \boldsymbol z_{1,\varepsilon}^-,\cdots, \boldsymbol z_{k,\varepsilon}^-)$.
    
	\section{Regularization for K\'arm\'an point vortex street on $\mathbb S^2$}\label{sec2}	
	Since the regularized vorticity $\omega_\varepsilon$ concentrates to 
	$$\kappa\sum\limits_{i=1}^{k}\boldsymbol \delta_{\boldsymbol z_i^+}-\kappa\sum\limits_{i=1}^{k}\boldsymbol \delta_{\boldsymbol z_i^-},$$
	the asymptotic behavior of the stream function $\psi_\varepsilon$ far from the vorticity set $\Omega_\varepsilon$ is given by 
	$$\kappa\sum\limits_{i=1}^{k}G(\boldsymbol z,\boldsymbol z_i^+)-\kappa\sum\limits_{i=1}^{k}G(\boldsymbol z,\boldsymbol z_i^-).$$ 
	Our first task is to modify the Green kernel and to find an approximate solution, which relies on the decomposition \eqref{1-1} of $G(\boldsymbol z,\boldsymbol z')$ 
	at $\boldsymbol z_i^\pm$.  In the following, we are going to approximate
	 $\kappa\Gamma(\boldsymbol z, \boldsymbol z_i^\pm)$ and $\kappa H(\boldsymbol z, \boldsymbol z_i^\pm)$ in \eqref{1-1} separately, 
	 and make up a vortex patch solution on $\mathbb S^2$.
	
	\subsection{The approximate stream function}
	
	The equation for the stream function of the well-known Rankine vortex is 
	\begin{equation*}
		\begin{cases}
			\Delta w=\boldsymbol{1}_{B_1(\boldsymbol 0)},  \ \ \ & \text{in} \ \mathbb R^2,\\
			w=\frac{1}{2}\ln\frac{1}{|\boldsymbol y|}, &\text{in} \ \mathbb R^2\setminus B_{1}(\boldsymbol 0),
		\end{cases}
	\end{equation*}
	whose solution is given by
	\begin{equation*}
		w(\boldsymbol y)=\left\{
		\begin{array}{lll}
			\frac{1}{4}(1-|\boldsymbol y|^2), \ \ \ \ \ &\mathrm{if} \ |\boldsymbol y|\le 1,\\
			\\
			\frac{1}{2}\ln\frac{1}{|\boldsymbol y|}, &\mathrm{if} \ |\boldsymbol y|\ge 1.
		\end{array}
		\right.
	\end{equation*}
	The approximate solution for $\Gamma(\boldsymbol z, \boldsymbol z_i^\pm)$ looks like a scaled version of $w$. 
	Define the tangent mapping $A: (\theta,\varphi)\to (x_1,x_2)$ from $\mathbb S^2$ to $T_{\boldsymbol z}{\mathbb S^2}$ with the matrix
	\begin{equation*}
		\mathrm{Mat}(A)=\left(
		\begin{array}{ccc}
			1 & 0             \\
			0 & \sin\theta
		\end{array}
		\right).
	\end{equation*}
	Recall that the locations $\boldsymbol z_i^\pm$ satisfy $\mathbf{(type\, 1)}$ or $\mathbf{(type\, 2)}$ with $0<\theta_0<\pi/2$. We introduce the function 
	\begin{equation*}
		V^\pm_{i,\varepsilon}(\boldsymbol z)=\left\{
		\begin{array}{lll}
			\frac{\kappa}{2\pi}\ln\frac{1}{\varepsilon}+\frac{1}{4\varepsilon^2}(s_\varepsilon^2-|A(\boldsymbol z-\boldsymbol z^\pm_i)|^2), \ \ \ &\mathrm{if} \ |A(\boldsymbol z-\boldsymbol z^\pm_i)|\le s_\varepsilon,\\
			\frac{\kappa}{2\pi}\frac{|\ln\varepsilon|}{|\ln s_\varepsilon|}\ln|A(\boldsymbol z-\boldsymbol z^\pm_i)|,&\mathrm{if} \ |A(\boldsymbol z-\boldsymbol z^\pm_i)|\ge s_\varepsilon
		\end{array}
		\right.
	\end{equation*}
	as the approximate function for $\kappa\Gamma(\boldsymbol z, \boldsymbol z_i^\pm)$, where $|\cdot|$ is the distance in the tangent space 
	$T_{\boldsymbol z}{\mathbb S^2}$. To make $V_{i,\varepsilon}^\pm$ a $C^1$ smooth function, direct calculation gives the following relationship.
	\begin{equation}\label{2-1}
		\beta_\varepsilon:=\frac{\kappa}{2\pi}\frac{|\ln\varepsilon|}{s_\varepsilon|\ln s_\varepsilon|}=\frac{s_\varepsilon}{2\varepsilon^2},
	\end{equation}
	where $\beta_\varepsilon$ is the value of $-\partial_\theta V_{i,\varepsilon}^\pm$ at $(s_\varepsilon+\theta_0,\varphi_i^\pm)$. By the formulation of $V_{i,\varepsilon}^\pm$, it holds the following integration equation.
	\begin{equation*}
	   V^\pm_{i,\varepsilon}(\boldsymbol z)=\frac{1}{\varepsilon^2}\int_{\{V^\pm_{i,\varepsilon}(\boldsymbol z)>\frac{\kappa}{2\pi}\ln\frac{1}{\varepsilon}\}} \Gamma(\theta,\varphi,\theta',\varphi')d\boldsymbol \sigma(\boldsymbol z').
	\end{equation*}
	To approximate the remaining regular part $\kappa H(\boldsymbol z, \boldsymbol z_i^\pm)$, we introduce
	\begin{equation*}
		R^\pm_{i,\varepsilon}(\boldsymbol z)=\frac{1}{\varepsilon^2}\int_{\{V^\pm_{i,\varepsilon}(\boldsymbol z)>\frac{\kappa}{2\pi}\ln\frac{1}{\varepsilon}\}} H(\theta,\varphi,\theta',\varphi')d\boldsymbol \sigma(\boldsymbol z').
	\end{equation*} 
    Since $H(\boldsymbol z,\boldsymbol z')$ is $C^1$ smooth, we can also use the symmetry in the integration to derive that $R^\pm_{i,\varepsilon}(\boldsymbol z)$ is an $O(\varepsilon^2)$-perturbation of $\kappa H(\boldsymbol z,\boldsymbol z^\pm_i)$. 
    
    By the definitions of $V^\pm_{i,\varepsilon}(\boldsymbol z)$ and $R^\pm_{i,\varepsilon}(\boldsymbol z)$, we have
	\begin{equation*}
		(-\Delta_{\mathbb S^2})\left(V^\pm_{i,\varepsilon}+R^\pm_{i,\varepsilon}\right)=\frac{1}{\varepsilon^2}\boldsymbol1_{\{V^\pm_{i,\varepsilon}(\boldsymbol z)>\frac{\kappa}{2\pi}\ln\frac{1}{\varepsilon}\}}.
	\end{equation*}
	Thus we split $\psi_\varepsilon(\boldsymbol z)$ as
	\begin{align*}
		\psi_\varepsilon(\boldsymbol z)&=\sum\limits_{i=1}^kV^+_{i,\varepsilon}+\sum\limits_{i=1}^kR^+_{i,\varepsilon}-\sum\limits_{i=1}^kV^-_{i,\varepsilon}-\sum\limits_{i=1}^kR^-_{i,\varepsilon}+\phi_\varepsilon\\
		&:= \Psi_\varepsilon+\phi_\varepsilon,
	\end{align*}
	in which $\Psi_\varepsilon(\boldsymbol z)$ is the approximate solution, and $\phi_\varepsilon(\boldsymbol z)$ is a small error term. Note that it holds 
	\begin{equation}\label{2-2}
	\left\{\boldsymbol z\in \mathbb S^2 \; \left\vert \; V^\pm_{i,\varepsilon}(\boldsymbol z)>\frac{\kappa}{2\pi}\ln\frac{1}{\varepsilon}\right.\right\}=\{\boldsymbol z\in \mathbb S^2\mid |A(\boldsymbol z-\boldsymbol z^\pm_i)|< s_\varepsilon\}.
	\end{equation}
	Denote $\mathcal C_{s_\varepsilon}(\boldsymbol z_i^\pm)=\{\boldsymbol z\in \mathbb S^2\mid |A(\boldsymbol z-\boldsymbol z^\pm_i)|= s_\varepsilon\}$, 
	and $B_\delta(\boldsymbol z_i^\pm)\subset \mathbb S^2$ as the spherical cap region with radius $\delta$ to localize each vortex. 
	Then equation \eqref{1-6} is then transformed to
	\begin{align*}
			0&=-\varepsilon^2\Delta_{\mathbb S^2}\left(\sum\limits_{i=1}^kV^+_{i,\varepsilon}+\sum\limits_{i=1}^kR^+_{i,\varepsilon}-\sum\limits_{i=1}^kV^-_{i,\varepsilon}-\sum\limits_{i=1}^kR^-_{i,\varepsilon}+\phi_\varepsilon\right)\\
			&\quad-\boldsymbol1_{\{\psi_\varepsilon+W_\varepsilon\cos\theta>\mu_\varepsilon\}}+\boldsymbol1_{\{-\psi_\varepsilon-W_\varepsilon\cos\theta>\mu_\varepsilon\}}\\
			&=\sum\limits_{i=1}^k\left(-\varepsilon^2\Delta_{\mathbb S^2}\left(V^+_{i,\varepsilon}+R^+_{i,\varepsilon}\right)-\boldsymbol1_{\{V^+_{i,\varepsilon}>\frac{\kappa}{2\pi}\ln\frac{1}{\varepsilon}\}}\right)\\
			&\quad -\sum\limits_{i=1}^k\left(-\varepsilon^2\Delta_{\mathbb S^2}\left(V^-_{i,\varepsilon}+R^-_{i,\varepsilon}\right)-\boldsymbol1_{\{V^-_{i,\varepsilon}>\frac{\kappa}{2\pi}\ln\frac{1}{\varepsilon}\}}\right)\\
			& \ \ \ +\varepsilon^2\left(-\Delta_{\mathbb S^2}\phi_\varepsilon-\sum\limits_{i=1}^k\frac{2}{s_\varepsilon}\phi_\varepsilon\boldsymbol\delta_{\mathcal C_{s_\varepsilon}(\boldsymbol z_i^+)}-\sum\limits_{i=1}^k\frac{2}{s_\varepsilon}\phi_\varepsilon\boldsymbol\delta_{\mathcal C_{s_\varepsilon}(\boldsymbol z_i^-)}\right)\\
			& \ \ \ -\sum\limits_{i=1}^k\bigg(\boldsymbol1_{ B_\delta(\boldsymbol z_i^+)}\boldsymbol1_{\{\psi_\varepsilon+W_\varepsilon\cos\theta>\mu_\varepsilon\}}-\boldsymbol1_{\{V_{i,\varepsilon}^+>\frac{\kappa}{2\pi}\ln\frac{1}{\varepsilon}\}}-\frac{2}{s_\varepsilon}\phi_\varepsilon\boldsymbol\delta_{\mathcal C_{s_\varepsilon}(\boldsymbol z_i^+)}\bigg)\\
			& \ \ \ +\sum\limits_{i=1}^k\bigg(\boldsymbol1_{ B_\delta(\boldsymbol z_i^-)}\boldsymbol1_{\{-\psi_\varepsilon-W_\varepsilon\cos\theta>\mu_\varepsilon\}}-\boldsymbol1_{\{V_{i,\varepsilon}^->\frac{\kappa}{2\pi}\ln\frac{1}{\varepsilon}\}}+\frac{2}{s_\varepsilon}\phi_\varepsilon\boldsymbol\delta_{\mathcal C_{s_\varepsilon}(\boldsymbol z_i^-)}\bigg)\\
			&=\varepsilon^2\mathbb L_\varepsilon\phi_\varepsilon-\varepsilon^2N_\varepsilon(\phi_\varepsilon),
	\end{align*}
	where
	\begin{equation*}
		\mathbb L_\varepsilon\phi_\varepsilon:=(-\Delta_{\mathbb S^2})\phi_\varepsilon-\sum\limits_{i=1}^k\frac{2}{s_\varepsilon}\phi_\varepsilon\boldsymbol\delta_{\mathcal C_{s_\varepsilon}(\boldsymbol z_i^+)}-\sum\limits_{i=1}^k\frac{2}{s_\varepsilon}\phi_\varepsilon\boldsymbol\delta_{\mathcal C_{s_\varepsilon}(\boldsymbol z_i^-)}
	\end{equation*}
	is the linear term, and
	\begin{align*}
		N_\varepsilon(\phi_\varepsilon)=&\frac{1}{\varepsilon^2}\sum\limits_{i=1}^k\bigg(\boldsymbol1_{ B_\delta(\boldsymbol z_i^+)}\boldsymbol1_{\{\psi_\varepsilon+W_\varepsilon\cos\theta>\mu_\varepsilon\}}-\boldsymbol1_{\{V_{i,\varepsilon}^+>\frac{\kappa}{2\pi}\ln\frac{1}{\varepsilon}\}}-\frac{2}{s_\varepsilon}\phi_\varepsilon\boldsymbol\delta_{\mathcal C_{s_\varepsilon}(\boldsymbol z_i^+)}\bigg)\\
		&-\frac{1}{\varepsilon^2}\sum\limits_{i=1}^k\bigg(\boldsymbol1_{B_\delta(\boldsymbol z_i^-)}\boldsymbol1_{\{-\psi_\varepsilon-W_\varepsilon\cos\theta>\mu_\varepsilon\}}-\boldsymbol1_{\{V_{i,\varepsilon}^->\frac{\kappa}{2\pi}\ln\frac{1}{\varepsilon}\}}+\frac{2}{s_\varepsilon}\phi_\varepsilon\boldsymbol\delta_{\mathcal C_{s_\varepsilon}(\boldsymbol z_i^-)}\bigg)\\
		:=&N_\varepsilon^+(\phi_\varepsilon)-N_\varepsilon^-(\phi_\varepsilon)
	\end{align*}
	is the nonlinear perturbation. To make $N_\varepsilon(\phi_\varepsilon)$ sufficiently small, the flux constant $\mu_\varepsilon$ is determined by
	\begin{equation}\label{2-3}
		\begin{split}
		-\frac{\kappa}{2\pi}\ln\frac{1}{\varepsilon}=&\kappa\sum\limits_{i=2}^kG(\boldsymbol z_1^+,\boldsymbol z_i^+)-\kappa\sum\limits_{i=1}^kG(\boldsymbol z_1^+,\boldsymbol z_i^-)\\
		&+\kappa H(\boldsymbol z_1^+,\boldsymbol z_1^+)-W_\varepsilon\cos\theta_0-\mu_\varepsilon.
		\end{split}
	\end{equation}
	Having done all these preparations, we are now to solve the semilinear problem
	\begin{equation}\label{2-4}
		\mathbb L_\varepsilon\phi_\varepsilon=N_\varepsilon(\phi_\varepsilon),
	\end{equation}
	which is discussed in the next part.

\subsection{The linear problem}

If $\mathbb L_\varepsilon$ is invertable, we can transform \eqref{2-4} into a fixed point problem $\phi_\varepsilon=\mathbb L_\varepsilon^{-1}N_\varepsilon(\phi_\varepsilon)$, which is easy to handle. However, $\mathbb L_\varepsilon$ is not invertible in our problem. 
Thus we must find the approximate kernel, and deal with the projective problem.

To begin with, let us consider the linearized operator for $-\Delta v-\boldsymbol 1_{\{v>0\}}$ at the well-known Rankine vortex as
\begin{equation}\label{2-5}
	\mathbb L_0\phi:=-\Delta\phi-2\phi\boldsymbol\delta_{|\boldsymbol{y}|=1}.
\end{equation}
In \cite{Cao3}, the authors dealt with a eigenvalue problem for Laplace--Beltrami operator on $\mathbb S^1$ to show that 
\begin{theorem}\label{thm3}
	Let $w\in L^\infty(\mathbb R^2)\cap C(\mathbb R^2)$ be a solution to $\mathbb L _0w=0$ with the operator $\mathbb L_0$ defined in \eqref{2-5}. Then
	\begin{equation*}
		w\in\mathrm{span} \left\{\frac{\partial w}{\partial y_1},\frac{\partial w}{\partial y_2}\right\}.
	\end{equation*}
\end{theorem}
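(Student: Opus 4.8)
The plan is to reduce the statement to a one-dimensional eigenvalue problem on the circle $\mathbb S^1$ by exploiting the structure of $\mathbb L_0 w = -\Delta w - 2w\,\boldsymbol\delta_{|\boldsymbol y|=1}$. First I would observe that away from the unit circle, $w$ is harmonic: $\Delta w = 0$ in $B_1(\boldsymbol 0)$ and in $\mathbb R^2\setminus \overline{B_1(\boldsymbol 0)}$, with $w$ bounded at infinity. Writing $w$ in polar coordinates $(r,\vartheta)$ and expanding in Fourier modes $w(r,\vartheta) = \sum_{n\in\mathbb Z} w_n(r) e^{in\vartheta}$, harmonicity forces $w_n(r) = a_n r^{|n|}$ for $r\le 1$ and $w_n(r) = b_n r^{-|n|}$ for $r\ge 1$ (the mode $n=0$ gives $a_0$ constant inside and $b_0$ constant outside, since $\ln r$ is unbounded). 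Boundedness and continuity across $r=1$ give $a_n = b_n$, so each mode is determined up to one constant.

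Next I would encode the Dirac term as a jump condition for the normal derivative across $|\boldsymbol y|=1$. Integrating $\mathbb L_0 w = 0$ against a test function supported near the circle yields
\begin{equation*}
	\left[\partial_r w\right]_{r=1^-}^{r=1^+} = -2 w(1,\vartheta),
\end{equation*}
i.e. the outward normal derivative from outside minus that from inside equals $-2w$ on the circle. Plugging in the modal form: the interior contributes $\partial_r (a_n r^{|n|})\big|_{r=1} = |n| a_n$, the exterior contributes $\partial_r(a_n r^{-|n|})\big|_{r=1} = -|n| a_n$, so the jump is $-2|n| a_n$, while $-2w_n(1) = -2a_n$. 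Hence $-2|n| a_n = -2 a_n$, forcing $a_n = 0$ unless $|n| = 1$. Therefore $w$ lies in the span of the two functions $r\cos\vartheta = y_1$ and $r\sin\vartheta = y_2$ inside the disk, continued as $r^{-1}\cos\vartheta$ and $r^{-1}\sin\vartheta$ outside — and a direct check shows these are exactly $\partial w_{\mathrm{Rankine}}/\partial y_1$ and $\partial w_{\mathrm{Rankine}}/\partial y_2$, since $w_{\mathrm{Rankine}}$ is radial and its derivatives scale like $|\boldsymbol y|$ inside and like $|\boldsymbol y|^{-1}$ outside. This identifies the kernel with $\mathrm{span}\{\partial_{y_1} w,\partial_{y_2} w\}$.

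The main obstacle I anticipate is making the jump-condition computation rigorous at the level of distributions: one must justify that a bounded continuous solution of $\mathbb L_0 w = 0$ in $\mathcal D'(\mathbb R^2)$ is genuinely harmonic off the circle (elliptic regularity / Weyl's lemma applied on the two open regions) and that the singular measure $2w\,\boldsymbol\delta_{|\boldsymbol y|=1}$ matches precisely the distributional jump $[\partial_r w]$ of the piecewise-harmonic function — this requires knowing a priori that $\nabla w$ has one-sided traces on the circle, which follows from the local boundedness and harmonicity but needs care. An alternative, which is presumably what is meant by "dealt with an eigenvalue problem for the Laplace--Beltrami operator on $\mathbb S^1$" in \cite{Cao3}, is to note that the reduced problem on each Fourier mode is the spectral identity $|n| = 1$, i.e. $n^2 = 1$ is the relevant eigenvalue of $-\partial_\vartheta^2$ on $\mathbb S^1$ matched against the Dirichlet-to-Neumann operator; once the modal decomposition is set up the conclusion is immediate. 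I would present the modal argument as the main line and cite \cite{Cao3} for the regularity underpinnings.
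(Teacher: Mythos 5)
Your proof is correct and follows essentially the same route as the source the paper relies on: the paper offers no proof of Theorem \ref{thm3} itself but attributes it to \cite{Cao3} via ``an eigenvalue problem for the Laplace--Beltrami operator on $\mathbb S^1$'', which is precisely your Fourier-mode reduction in which the jump relation $|n|a_n=a_n$ eliminates every mode except $n^2=1$. The surviving modes ($r\cos\vartheta$, $r\sin\vartheta$ inside matched to $r^{-1}\cos\vartheta$, $r^{-1}\sin\vartheta$ outside) are exactly $\mathrm{span}\{\partial_{y_1}w,\partial_{y_2}w\}$, as you check, so no further comparison is needed.
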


By defining the mapping $A_0: (\theta,\varphi)\to (x_1,x_2)$ from $\mathbb S^2$ to $T_{\boldsymbol z_i^\pm}\mathbb S^2$ with the matrix 
\begin{equation*}
	\mathrm{Mat}(A_0)=\left(
	\begin{array}{ccc}
		1 & 0             \\
		0 & \sin\theta_0
	\end{array}
	\right),
\end{equation*}
we introduce the rescaled Rankine vortex $U^\pm_{i,\varepsilon}(\boldsymbol z)$ as 
\begin{equation*}
	U^\pm_{i,\varepsilon}(\boldsymbol z)=\left\{
	\begin{array}{lll}
		\frac{\kappa}{2\pi}\ln\frac{1}{\varepsilon}+\frac{1}{4\varepsilon^2}(s_\varepsilon^2-|A_0(\boldsymbol z-\boldsymbol z^\pm_i)|^2), \ \ \ &\mathrm{if} \ |A_0(\boldsymbol z-\boldsymbol z^\pm_i)|\le s_\varepsilon,\\
		\frac{\kappa}{2\pi}\frac{|\ln\varepsilon|}{|\ln s_\varepsilon|}\ln|A_0(\boldsymbol z-\boldsymbol z^\pm_i)|,&\mathrm{if} \ |A_0(\boldsymbol z-\boldsymbol z^\pm_i)|\ge s_\varepsilon,
	\end{array}
	\right.
\end{equation*}
which are functions near $V^\pm_{i,\varepsilon}(\boldsymbol z)$. Since $\mathbb L_\varepsilon$ is almost the scaled version of $\mathbb L_0$, a suitable choice for the element in the approximate kernel is
\begin{equation*}
	Z_\varepsilon ({\boldsymbol z})=\sum\limits_{i=1}^k\chi_i^+ ({\boldsymbol z})\frac{\partial U^+_{i,\varepsilon}}{\partial \theta}-\sum\limits_{i=1}^k\chi_i^- ({\boldsymbol z})\frac{\partial U^-_{i,\varepsilon}}{\partial \theta},
\end{equation*}
where
\begin{equation*}
	\frac{\partial U^+_{i,\varepsilon}}{\partial \theta}=\left\{
	\begin{array}{lll}
		-\frac{1}{2\varepsilon^2}(\theta-\theta_0), \ \ \ & \mathrm{if} \ |A_0(\boldsymbol z-\boldsymbol z^+_i)|\le s_\varepsilon,\\
		-\frac{\kappa|\ln \varepsilon|}{2\pi|\ln s_\varepsilon|}\frac{\theta-\theta_0}{|A_0(\boldsymbol z-\boldsymbol z^+_i)|^2}, & \mathrm{if} \ |A_0(\boldsymbol z-\boldsymbol z^+_i)|\ge s_\varepsilon,
	\end{array}
	\right.
\end{equation*}
\begin{equation*}
	\frac{\partial U^-_{i,\varepsilon}}{\partial \theta}=\left\{
	\begin{array}{lll}
		-\frac{1}{2\varepsilon^2}(\theta-\theta_0), \ \ \ & \mathrm{if} \ |A_0(\boldsymbol z-\boldsymbol z^-_i)|\le s_\varepsilon,\\
		-\frac{\kappa|\ln \varepsilon|}{2\pi|\ln s_\varepsilon|}\frac{\theta-(\pi-\theta_0)}{|A_0(\boldsymbol z-\boldsymbol z^-_i)|^2}, & \mathrm{if} \ |A_0(\boldsymbol z-\boldsymbol z^-_i)|\ge s_\varepsilon,
	\end{array}
	\right.
\end{equation*}
and 
\begin{equation*}
	\chi_i^\pm(\boldsymbol z)=\left\{
	\begin{array}{lll}
		1, \ \ \  & \mathrm{if} \ |\boldsymbol z-\boldsymbol z_i^\pm|_{\mathbb S^2}< \varepsilon|\ln\varepsilon|,\\
		0, & \mathrm{if} \ |\boldsymbol z-\boldsymbol z_i^\pm|_{\mathbb S^2}\ge 2\varepsilon|\ln\varepsilon|
	\end{array}
	\right.
\end{equation*}
are smooth truncation functions radially symmetric with respect to $\boldsymbol z_i^\pm$ satisfying
\begin{equation*}
	|\nabla \chi_i^\pm(\boldsymbol z)|\le \frac{2}{\varepsilon|\ln\varepsilon|} \quad \mathrm{and} \quad |\nabla^2 \chi_i^\pm(\boldsymbol z)|\le \frac{2}{\varepsilon^2|\ln\varepsilon|^2}
\end{equation*}
 with $|\cdot|_{\mathbb S^2}$ the distance on the sphere $\mathbb S^2$. We claim that the approximate kernel of $\mathbb L_\varepsilon$ is
 one-dimensional, since the direction $\partial_\varphi$ is eliminated by the $k$-fold symmetry of the vertex streets.

Also thanks to the $k$-fold symmetry of the problem, we can restrict our discussion to the domain 
\begin{equation*}
	\Pi:=(0,\pi)\times\left(0,2\pi/k\right),
\end{equation*}
which is regarded as a typical period of K\'arm\'an vortex streets. In these settings, the projective problem for \eqref{2-4} is rewritten as
\begin{equation}\label{2-6}
	   \begin{cases}
		\mathbb L_\varepsilon\phi=\mathbf h(\boldsymbol z)+\Lambda(-\Delta_{\mathbb S^2})Z_\varepsilon, \ \ &\text{in} \ \Pi,\\
		\int_{\Pi}  \phi(\boldsymbol z)(-\Delta_{\mathbb S^2})Z_\varepsilon(\boldsymbol z) d\boldsymbol \sigma=0,
	   \end{cases}
\end{equation}
where the nonlinear term $\mathbf h(\boldsymbol z)$ satisfies 
$$\mathrm{supp}\, \mathbf h(\boldsymbol z)\subset B_{L\varepsilon}(\boldsymbol z_1^+)\cup B_{L\varepsilon}(\boldsymbol z_1^-)$$
with $L$ a large positive constant, and $\Lambda$ is the coefficient such that 
\begin{equation*}
	\int_{\Pi}Z_\varepsilon\big[\mathbb L_\varepsilon\phi-\mathbf h-\Lambda (-\Delta_{\mathbb S^2})Z_\varepsilon\big]d\boldsymbol \sigma=0.
\end{equation*}

By denoting the $\|\cdot\|_*$ norm for the function $\phi$ on the sphere as  
$$\|\phi\|_*=\sup_\Pi |\phi|,$$
we first give a coercive estimate for $\mathbb L_\varepsilon$ in the next lemma, from which we can verify the choice of $Z_\varepsilon$ is reasonable. 
In the subsequent proof, the large positive constant $L$ is always used to bound the radius of each vortex set $\Omega_{i,\varepsilon}^\pm$ with $L\ep$. 
\begin{lemma}\label{lem2-2}
	Assume that $\mathbf h$ satisfies $\mathrm{supp}\, \mathbf h\subset B_{L\varepsilon}(\boldsymbol z_1^+)\cup B_{L\varepsilon}(\boldsymbol z_1^-),$ and $$\varepsilon^{1-\frac{2}{p}}\|\mathbf h\|_{W^{-1,p}(B_{L\varepsilon}(\boldsymbol z_1^+)\cup B_{L\varepsilon}(\boldsymbol z_1^-))}<\infty$$
	for $p\in (2,+\infty]$, then there exists a small $\varepsilon_0>0$ and a positive constant $c_0$ such that for any $\varepsilon\in(0,\varepsilon_0]$ and solution pair $(\phi,\Lambda)$ to \eqref{2-6}, it holds
	\begin{equation}\label{2-7}
		\|\phi\|_*+\varepsilon^{1-\frac{2}{p}}\|\nabla\phi\|_{L^p(B_{L\varepsilon}(\boldsymbol z_1^+)\cup B_{L\varepsilon}(\boldsymbol z_1^-))}+|\Lambda|\varepsilon^{-1}\le c_0\varepsilon^{1-\frac{2}{p}}\|\mathbf h\|_{W^{-1,p}(B_{L\varepsilon}(\boldsymbol z_1^+)\cup B_{L\varepsilon}(\boldsymbol z_1^-))}.
	\end{equation}
\end{lemma}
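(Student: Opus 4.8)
The plan is to prove \eqref{2-7} by contradiction, combining a blow-up (rescaling) analysis around the two vortex cores $\boldsymbol z_1^\pm$ lying in the period $\Pi$ with the classification of $\ker\mathbb L_0$ in Theorem \ref{thm3}. First I would control the multiplier $\Lambda$. Pairing the first equation of \eqref{2-6} with $Z_\varepsilon$ and integrating by parts over $\Pi$ (which produces no boundary term, by the $k$-fold periodicity and the smoothness of the integrand at the poles), one gets
\begin{equation*}
\Lambda\int_\Pi Z_\varepsilon(-\Delta_{\mathbb S^2})Z_\varepsilon\,d\boldsymbol\sigma=\int_\Pi\phi\,\mathbb L_\varepsilon Z_\varepsilon\,d\boldsymbol\sigma-\int_\Pi Z_\varepsilon\,\mathbf h\,d\boldsymbol\sigma .
\end{equation*}
Using the explicit form of $\partial_\theta U^\pm_{i,\varepsilon}$ and \eqref{2-1} (which forces $s_\varepsilon=\sqrt{\kappa/\pi}\,\varepsilon(1+o(1))$) one checks that $\int_\Pi Z_\varepsilon(-\Delta_{\mathbb S^2})Z_\varepsilon=\int_\Pi|\nabla Z_\varepsilon|^2$ is comparable to $\varepsilon^{-2}$; that $\mathbb L_\varepsilon Z_\varepsilon$ is $o(\varepsilon^{-2})$ in $L^1(\Pi)$ because $Z_\varepsilon$ is an approximate kernel of $\mathbb L_\varepsilon$, with the residual concentrated in the cut-off annuli $\{\varepsilon|\ln\varepsilon|\le|\boldsymbol z-\boldsymbol z_i^\pm|_{\mathbb S^2}\le 2\varepsilon|\ln\varepsilon|\}$ (this uses the $C^1$-matching \eqref{2-1} and the bounds on $\nabla\chi_i^\pm$, $\nabla^2\chi_i^\pm$); and that $\|Z_\varepsilon\|_{W^{1,p'}(B_{L\varepsilon})}\lesssim\varepsilon^{-2/p}$, with $p'$ the Hölder conjugate of $p$. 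Combining these with $|\int_\Pi Z_\varepsilon\mathbf h|\le\|Z_\varepsilon\|_{W^{1,p'}}\|\mathbf h\|_{W^{-1,p}}$ gives
\begin{equation*}
|\Lambda|\,\varepsilon^{-1}\le C\,\varepsilon^{1-\frac{2}{p}}\|\mathbf h\|_{W^{-1,p}}+o(1)\,\|\phi\|_* ,
\end{equation*}
so it remains to bound $\|\phi\|_*+\varepsilon^{1-2/p}\|\nabla\phi\|_{L^p}$ by the right-hand side of \eqref{2-7}.

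Suppose \eqref{2-7} failed. Then there are $\varepsilon_n\to0$, data $\mathbf h_n$, and solutions $(\phi_n,\Lambda_n)$ of \eqref{2-6} with $\|\phi_n\|_*+\varepsilon_n^{1-2/p}\|\nabla\phi_n\|_{L^p}=1$ and $\varepsilon_n^{1-2/p}\|\mathbf h_n\|_{W^{-1,p}}\to0$; by the step above $|\Lambda_n|\varepsilon_n^{-1}\to0$ as well. Rescale around each core, say $\hat\phi_n(\boldsymbol y):=\phi_n\big(\boldsymbol z_1^+ + s_{\varepsilon_n}A_0^{-1}\boldsymbol y\big)$ near $\boldsymbol z_1^+$ and similarly near $\boldsymbol z_1^-$. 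Then $\|\hat\phi_n\|_{L^\infty}\le1$ and $\hat\phi_n$ solves a rescaled equation whose operator converges, locally in $\mathbb R^2$, to $\mathbb L_0$ from \eqref{2-5} (here $s_{\varepsilon_n}/\varepsilon_n\to\sqrt{\kappa/\pi}$ and $\mathcal C_{s_{\varepsilon_n}}\to\{|\boldsymbol y|=1\}$), with right-hand side tending to $0$ in $W^{-1,p}_{\mathrm{loc}}$. Reading the singular term $\tfrac{2}{s_\varepsilon}\phi\boldsymbol\delta_{\mathcal C_{s_\varepsilon}}$ as a transmission condition — continuity of $\phi$ across $\mathcal C_{s_\varepsilon}$ with a prescribed jump of the normal derivative — interior $W^{1,p}$ and $C^{0,1-2/p}$ estimates for this problem, as in \cite{Cao3}, make the sequence precompact, so $\hat\phi_n\to\hat\phi$ in $C_{\mathrm{loc}}(\mathbb R^2)$ with $\hat\phi\in L^\infty(\mathbb R^2)\cap C(\mathbb R^2)$ and $\mathbb L_0\hat\phi=0$. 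By Theorem \ref{thm3}, $\hat\phi\in\mathrm{span}\{\partial w/\partial y_1,\partial w/\partial y_2\}$; the $k$-fold symmetry of $\phi_n$ kills the longitude component $\partial w/\partial y_2$, and letting $n\to\infty$ in the constraint $\int_\Pi\phi_n(-\Delta_{\mathbb S^2})Z_{\varepsilon_n}\,d\boldsymbol\sigma=0$ kills the colatitude component $\partial w/\partial y_1$. Hence $\hat\phi\equiv0$, i.e. $\|\phi_n\|_{L^\infty(B_{L\varepsilon_n}(\boldsymbol z_1^\pm))}\to0$ and $\varepsilon_n^{1-2/p}\|\nabla\phi_n\|_{L^p(B_{L\varepsilon_n}(\boldsymbol z_1^\pm))}\to0$.

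On $\Pi\setminus\big(B_{L\varepsilon_n}(\boldsymbol z_1^+)\cup B_{L\varepsilon_n}(\boldsymbol z_1^-)\big)$ the right-hand side of \eqref{2-6} equals $\mathbf h_n$, which vanishes there, plus $\Lambda_n(-\Delta_{\mathbb S^2})Z_{\varepsilon_n}$, whose $W^{-1,p}$-norm is $o(1)$ since $|\Lambda_n|\varepsilon_n^{-1}\to0$ and $\|(-\Delta_{\mathbb S^2})Z_{\varepsilon_n}\|_{W^{-1,p}}\lesssim\varepsilon_n^{-1+2/p}$. So $\phi_n$ is bounded and almost harmonic on this period with two shrinking caps removed, with trace on $\partial B_{L\varepsilon_n}(\boldsymbol z_1^\pm)$ tending to $0$ by the previous step; the maximum principle together with standard elliptic estimates then give $\|\phi_n\|_{L^\infty}\to0$ on the exterior, hence $\|\phi_n\|_*\to0$ and, by the interior $L^p$ estimate once more, $\varepsilon_n^{1-2/p}\|\nabla\phi_n\|_{L^p}\to0$. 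This contradicts the normalization $=1$, so \eqref{2-7} holds, and feeding it back into the first step also controls $|\Lambda|\varepsilon^{-1}$.

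The main obstacle is that $\mathbb L_\varepsilon$ is not a classical elliptic operator: the singular potentials $\tfrac{2}{s_\varepsilon}\boldsymbol\delta_{\mathcal C_{s_\varepsilon}(\boldsymbol z_i^\pm)}$ force one to develop the elliptic theory (Schauder/$L^p$ estimates and the maximum principle) for the associated transmission problem with constants uniform as $\varepsilon\to0$, so that the rescaled sequence is precompact and its limit operator is exactly $\mathbb L_0$. The companion technical point is the quantitative statement that $Z_\varepsilon$ is an approximate kernel, i.e. that $\mathbb L_\varepsilon Z_\varepsilon$ is negligible — its singular parts cancel inside the core and only the $\chi_i^\pm$-truncation error survives in the annuli — which is what makes the estimate of $\Lambda$ work. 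Once these are in hand, the passage to the limit, the use of Theorem \ref{thm3}, and the symmetry/orthogonality bookkeeping collapsing the two-dimensional limiting kernel to $\{0\}$ are routine.
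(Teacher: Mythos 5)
Your proposal follows essentially the same route as the paper's proof: pair the equation with $Z_\varepsilon$ to control $\Lambda$, run a blow-up/contradiction argument under the normalization $\|\phi_n\|_*+\varepsilon_n^{1-2/p}\|\nabla\phi_n\|_{L^p}=1$, identify the rescaled limit with an element of $\ker\mathbb L_0$ via Theorem \ref{thm3}, eliminate it using the $k$-fold symmetry and the orthogonality constraint, and conclude with the maximum principle. The one step to tighten is your claim that $\|\mathbb L_\varepsilon Z_\varepsilon\|_{L^1(\Pi)}=o(\varepsilon^{-2})$: since $\int_\Pi Z_\varepsilon(-\Delta_{\mathbb S^2})Z_\varepsilon\sim\varepsilon^{-2}$, that bound only yields $|\Lambda|=o(1)\|\phi\|_*$, whereas both the stated inequality \eqref{2-7} and the later smallness of the rescaled right-hand side require $|\Lambda|\varepsilon^{-1}\le o(1)\|\phi\|_*+C\varepsilon^{1-2/p}\|\mathbf h\|_{W^{-1,p}}$, i.e. the pairing $\int_\Pi\phi\,\mathbb L_\varepsilon Z_\varepsilon\,d\boldsymbol\sigma$ must be $o(\varepsilon^{-1})\|\phi\|_*$ --- the explicit computation of the cut-off annulus terms (as in the paper, which gets $O\bigl(\varepsilon^{-1}|\ln\varepsilon|^{-1}\bigr)\|\phi\|_*$) supplies exactly this.
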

\begin{proof}
	To derive the estimate for coefficient $\Lambda$, we integrate by parts to obtain
	\begin{equation}\label{2-8}
		\Lambda\int_{\Pi} \left[(\partial_\theta Z_\varepsilon)^2+\left(\frac{\partial_\varphi Z_\varepsilon}{\sin\theta}\right)^2\right]d\boldsymbol \sigma=\int_{\Pi}Z_\varepsilon\mathbb L_\varepsilon\phi d\boldsymbol \sigma-\int_{\Pi} Z_\varepsilon\mathbf h d\boldsymbol \sigma.
	\end{equation}
Let 
	$$\Pi^+:=(0,\pi/2)\times\left(0,2\pi/k\right)$$ 
	be the upper half of $\Pi$. For the left hand side of \eqref{2-8}, it holds
	\begin{align*}
		\Lambda\int_{\Pi} \left[(\partial_\theta Z_\varepsilon)^2+\left(\frac{\partial_\varphi Z_\varepsilon}{\sin\theta}\right)^2\right]d\boldsymbol \sigma
		&=2\Lambda\int_{\Pi^+} \sin\theta \left[\left(\frac{\partial^2 U^+_{1,\varepsilon}}{\partial \theta^2}\right)^2+\left(\frac{\frac{\partial^2U_{1,\varepsilon}^+}{\partial_\varphi\partial_\theta}}{\sin\theta_0}\right)^2\right]d\theta d\varphi+\Lambda\frac{o_\varepsilon(1)}{\varepsilon^2}\\
		&=\Lambda\frac{C_z}{\varepsilon^2}(1+o_\varepsilon(1)),
	\end{align*}
	where $C_z=\int_{\mathbb R^2}(\nabla \partial_{y_1}w)^2 d\boldsymbol y$ is a positive constant.

	To estimate the first term in the right-hand side of \eqref{2-8}, it is sufficient to consider the integration in $\Pi^+$ owing to the symmetry of the vortex streets.
	\begin{align*}
		\int_{\Pi^+}Z_\varepsilon\mathbb L_\varepsilon\phi d\boldsymbol \sigma&=\int_{\Pi^+}\phi\mathbb L_\varepsilon Z_\varepsilon d\boldsymbol \sigma\\
		&=\int_{\Pi^+}\phi\mathbb (-\Delta_{\mathbb S^2})Z_\varepsilon d\boldsymbol \sigma-\frac{2}{s_\varepsilon}\int_{\mathcal C_{s_\varepsilon}(\boldsymbol z_1^+)}\phi\sin\theta \frac{\partial U^+_{1,\varepsilon}}{\partial \theta}.
	\end{align*}
	It holds
	\begin{align*}
		\int_{\Pi^+}&\phi\mathbb (-\Delta_{\mathbb S^2})Z_\varepsilon d\boldsymbol \sigma=\int_{\Pi^+}\phi\left[\partial_\theta(\sin\theta\partial_\theta Z_\varepsilon)+\frac{\partial_\varphi^2Z_\varepsilon}{\sin\theta}\right]d\theta d\varphi\\
		&=\int_{\Pi^+}\phi\cos\theta\partial_\theta\left(\chi_1^+\frac{\partial U^+_{1,\varepsilon}}{\partial \theta}\right)d\theta d\varphi +\int_{\Pi^+}\phi\sin\theta\left[(\partial_\theta^2\chi_1^+)\frac{\partial U^+_{1,\varepsilon}}{\partial \theta}+\frac{\partial_\varphi^2\chi_1^+}{\sin^2\theta}\frac{\partial U^+_{1,\varepsilon}}{\partial \theta}\right]d\theta d\varphi\\
		&\quad+2\int_{\Pi^+}\phi\sin\theta\left[\partial_\theta\chi_1^+\frac{\partial^2 U^+_{1,\varepsilon}}{\partial \theta^2}+\frac{\partial_\varphi\chi_1^+}{\sin^2\theta} \frac{\partial^2 U^+_{1,\varepsilon}}{\partial\varphi\partial \theta}\right]d\theta d\varphi\\
		&\quad+\int_{\Pi^+}\phi\chi_1^+\sin\theta\left[\frac{1}{\sin^2\theta}-\frac{1}{\sin^2\theta_0}\right] \frac{\partial^2 U^+_{1,\varepsilon}}{\partial\varphi^2\partial \theta}d\theta d\varphi+\int_{\Pi^+}\phi\chi_1^+\sin\theta\left[\frac{\partial^3 U^+_{1,\varepsilon}}{\partial \theta^3}+\frac{\frac{\partial^3 U^+_{1,\varepsilon}}{\partial \varphi^2\partial \theta}}{\sin^2\theta_0}\right]d\theta d\varphi\\
		&=I_1+I_2+I_3+I_4+I_5.
\end{align*}
	According to the property for truncation function $\chi_1^+$, we have $I_1\le C|\ln\varepsilon|\|\phi\|_*$, $I_2\le \frac{C\|\phi\|_*}{\varepsilon|\ln\varepsilon|}$, $I_3\le \frac{C\|\phi\|_*}{\varepsilon|\ln\varepsilon|}$, and $I_4\le C\|\phi\|_*$. For the last term $I_5$, by applying the nondegenerate property in Theorem \ref{thm3}, we have
	\begin{equation*}
		I_5=\frac{2}{s_\varepsilon}\int_{\mathcal C_{s_\varepsilon}(\boldsymbol z_1^+)}\phi\sin\theta \frac{\partial U^+_{1,\varepsilon}}{\partial \theta}
		+O_\varepsilon(1)\|\phi\|_*.
	\end{equation*} 
	Thus we have
	\begin{equation*}
		\int_{\Pi^+}Z_\varepsilon\mathbb L_\varepsilon\phi d\boldsymbol \sigma\le \frac{C}{\varepsilon|\ln\varepsilon|}\|\phi\|_*.
	\end{equation*}
	For the last term in \eqref{2-8}, we can apply the Poincar\'e inequality to derive 
	\begin{align*}
		\int_{\Pi} Z_\varepsilon\mathbf h d\boldsymbol \sigma&\le \|\mathbf h\|_{W^{-1,p}(B_{L\varepsilon}(\boldsymbol z_1^+)\cup B_{L\varepsilon}(\boldsymbol z_1^-))}\|\nabla Z_\varepsilon\|_{L^{p'}(B_{L\varepsilon}(\boldsymbol z_1^+)\cup B_{L\varepsilon}(\boldsymbol z_1^-))}\\
		&\le C\varepsilon^{\frac{2}{p'}-2} \|\mathbf h\|_{W^{-1,p}(B_{L\varepsilon}(\boldsymbol z_1^+)\cup B_{L\varepsilon}(\boldsymbol z_1^-))}.
	\end{align*}
	Concluding all the above estimates together, we have
	\begin{equation*}
		|\Lambda|\varepsilon^{-1}\le\frac{C}{|\ln\varepsilon|}\|\phi\|_*+\varepsilon^{\frac{2}{p'}-1}\|\mathbf h\|_{W^{-1,p}(B_{L\varepsilon}(\boldsymbol z_1^+)\cup B_{L\varepsilon}(\boldsymbol z_1^-))}.
	\end{equation*}
	Since
	\begin{equation}\label{2-9}
		\|(-\Delta_{\mathbb S^2})Z_\varepsilon\|_{W^{-1,p}(B_{L\varepsilon}(\boldsymbol z_1^+)\cup B_{L\varepsilon}(\boldsymbol z_1^-))}\le C\|\nabla Z_\varepsilon\|_{L^{p}(B_{L\varepsilon}(\boldsymbol z_1^+)\cup B_{L\varepsilon}(\boldsymbol z_1^-))}=C \varepsilon^{\frac{2}{p}-2},
	\end{equation} 
	it holds
	\begin{equation*}
			\|\Lambda(-\Delta_{\mathbb S^2})Z_\varepsilon\|_{W^{-1,p}(B_{L\varepsilon}(\boldsymbol z_1^+)\cup B_{L\varepsilon}(\boldsymbol z_1^-))}\le C \frac{\varepsilon^{\frac{2}{p}-1}}{|\ln\varepsilon|}\|\phi\|_*+C\|\mathbf h\|_{W^{-1,p}(B_{L\varepsilon}(\boldsymbol z_1^+)\cup B_{L\varepsilon}(\boldsymbol z_1^-))}.
	\end{equation*}
	
	To obtain the estimate for $\|\phi\|_*$ and $\varepsilon^{1-\frac{2}{p}}\|\nabla\phi\|_{L^p(B_{L\varepsilon}(\boldsymbol z_1^+)\cup B_{L\varepsilon}(\boldsymbol z_1^-))}$ in the next step, we argue by contradiction. Suppose not. Then there exists a sequence $\{\varepsilon_n\}$ tending to $0$, such that
	\begin{equation}\label{2-10}
		\|\phi_n\|_*+\varepsilon_n^{1-\frac{2}{p}}\|\nabla\phi_n\|_{L^p(B_{L\varepsilon_n}(\boldsymbol z_1^+)\cup B_{L\varepsilon_n}(\boldsymbol z_1^-))}=1,
	\end{equation} 
	and
	\begin{equation*}
		\varepsilon_n^{\frac{2}{p'}-2} \|\mathbf h\|_{W^{-1,p}(B_{L\varepsilon_n}(\boldsymbol z_1^+)\cup B_{L\varepsilon_n}(\boldsymbol z_1^-))}\le\frac{1}{n}.
	\end{equation*}
	The solution $\phi_n$ satisfies the equation
	\begin{equation*}
		(-\Delta_{\mathbb S^2})\phi_n=\frac{2}{s_\varepsilon}\phi_n\boldsymbol \delta_{\mathcal C_{s_\varepsilon}(\boldsymbol z_1^+)}+\frac{2}{s_\varepsilon}\phi_n\boldsymbol \delta_{\mathcal C_{s_\varepsilon}(\boldsymbol z_1^-)}+\mathbf h+\Lambda(-\Delta_{\mathbb S^2})Z_\varepsilon.
	\end{equation*}
	By letting
	$$g(\boldsymbol z)=\frac{2}{s_\varepsilon}\phi_n\delta_{\mathcal C_{s_\varepsilon}(\boldsymbol z_1^-)}+\mathbf h+\Lambda(-\Delta_{\mathbb S^2})Z_\varepsilon$$
	and $\tilde v(\boldsymbol y)=v(s_{\varepsilon_n}y_1+\theta_0,s_{\varepsilon_n}\sin^{-1}(s_{\varepsilon_n}y_1+\theta_0)y_2+\varphi_1^+)$ for a general function $v$, we see that $\tilde\phi_n$ satisfies the scaled equation
	\begin{equation}\label{2-11}
		\int_{D_n}\left[\frac{\partial \tilde\phi_n}{\partial y_1}\frac{\partial \zeta}{\partial y_1}+\frac{\partial \tilde\phi_n}{\partial y_2}\frac{\partial \zeta}{\partial y_2}\right]d\boldsymbol y=2\int_{|\boldsymbol y|=1}\tilde\phi_n\zeta d\xi+\langle \tilde g_n, \zeta\rangle, \quad \forall \, \zeta\in C_0^\infty(D_n),
	\end{equation}
	where
	$$D_n:=\{\boldsymbol y\mid (s_{\varepsilon_n}y_1+\theta_0,s_{\varepsilon_n}\sin^{-1}(s_{\varepsilon_n}y_1+\theta_0)y_2+\varphi_1^+)\in \Pi^+\}.$$
	Since 
	\begin{equation*}
		\|\tilde g_n\|_{W^{-1,p}(B_L(\boldsymbol 0))}\le C\varepsilon_n^{1-\frac{2}{p}}\left( \frac{\varepsilon_n^{\frac{2}{p}-1}}{|\ln\varepsilon_n|}\|\phi_n\|_*+\|\mathbf h\|_{W^{-1,p}(B_{L\varepsilon_n}(\boldsymbol z_1^+)\cup B_{L\varepsilon_n}(\boldsymbol z_1^-))}\right)=o_n(1)
	\end{equation*}
	by estimate \eqref{2-9}, $\tilde\phi_n$ is bounded in $W^{1,p}_{\mathrm{loc}}(\mathbb R^2)$ by regularity theory of elliptic operator, and bounded in $C^{\alpha}_{\mathrm{loc}}(\mathbb R^2)$ for some $\alpha>0$ by Sobolev embedding. Hence we can assume $\tilde\phi_n$ converge uniformly to $\phi^*\in L^\infty(\mathbb R^2)\cap C(\mathbb R^2)$ in any fixed compact set in $\mathbb R^2$, which satisfies the limit equation
	\begin{equation*}
	   -\Delta\phi^*=2\phi^*\boldsymbol \delta_{|\boldsymbol y|=1}, \quad \mathrm{in} \ \mathbb R^2.
	\end{equation*}
	Owing to Theorem \ref{thm3} and the symmetry property of our problem, it holds
	\begin{equation*}
		\phi^*=C^*\frac{\partial w}{\partial y_1}.
	\end{equation*}
	However, from the projection condition in \eqref{2-6} we know that $\int_{\mathbb R^2}\nabla\phi^*\nabla \partial_{y_1} wd\boldsymbol y=0$. Thus $C^*=0$ and
	\begin{equation*}
		\|\phi_n\|_{L^\infty(B_{L\varepsilon_n}(\boldsymbol z_1^+)\cup B_{L\varepsilon_n}(\boldsymbol z_1^-))}=o_n(1).
	\end{equation*}
	Then, in view of the maximum principle, we have
	\begin{equation}
		\|\phi_n\|_*=o_n(1).
	\end{equation}
	
	On the other hand, the right-hand side of equation \eqref{2-11} can be controlled by 
	$$o_n(1)\|\zeta\|_{W^{1,1}(B_L(\boldsymbol 0))}+o_n(1)\|\zeta\|_{W^{1,p'}(B_L(\boldsymbol 0))}=o_n(1)\left(\int_{B_L(\boldsymbol 0)}|\nabla\zeta|^{p'}\right)^{\frac{1}{p'}}d\boldsymbol y.$$
    As a result, we deduce that
    \begin{equation*}
    	\varepsilon_n^{1-\frac{2}{p}}\|\nabla\phi_n\|_{L^p(B_{L\varepsilon_n}(\boldsymbol z_1^+)\cup B_{L\varepsilon_n}(\boldsymbol z_1^-))}\le C||\nabla\tilde\phi_n||_{L^p(B_L(\boldsymbol 0))}=o_n(1).
    \end{equation*}
	Hence we get a contradiction to \eqref{2-10}. Combining this fact with the estimate for $\Lambda$, we then deduce that \eqref{2-7} holds, and complete the proof of Lemma \ref{lem2-2}.
\end{proof}
	
	By the coercive estimate in Lemma \ref{lem2-2}, the projective problem \eqref{2-6} is solvable according to the following lemma.
	\begin{lemma}\label{lem2-3}
		Suppose that $\mathrm{supp}\, \mathbf h\subset B_{L\varepsilon}(\boldsymbol z_1^+)\cup B_{L\varepsilon}(\boldsymbol z_1^-)$ and 
		$$\varepsilon^{1-\frac{2}{p}}\|\mathbf h\|_{W^{-1,p}(B_{L\varepsilon}(\boldsymbol z_1^+)\cup B_{L\varepsilon}(\boldsymbol z_1^-))}<\infty$$
		for $p\in(2,+\infty]$. Then there exists a small $\varepsilon_0>0$ such that for any $\varepsilon\in(0,\varepsilon_0]$ and $\Lambda$ the projection coefficient, \eqref{2-6} has a unique solution $\phi_\varepsilon=\mathcal T_\varepsilon \, \mathbf h$, where $\mathcal T_\varepsilon$ is a linear operator. Moreover, there exists a constant $c_0>0$ independent of $\varepsilon$, such that
		\begin{equation}\label{2-13}
			\|\phi_\varepsilon\|_*+\varepsilon^{1-\frac{2}{p}}\|\nabla\phi_\varepsilon\|_{L^p(B_{L\varepsilon}(\boldsymbol z_1^+)\cup B_{L\varepsilon}(\boldsymbol z_1^-))}\le c_0\varepsilon^{1-\frac{2}{p}}\|\mathbf h\|_{W^{-1,p}(B_{L\varepsilon}(\boldsymbol z_1^+)\cup B_{L\varepsilon}(\boldsymbol z_1^-))}.
		\end{equation}
	\end{lemma}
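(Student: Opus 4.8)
The plan is to deduce the statement from the \emph{a priori} estimate already established in Lemma~\ref{lem2-2}: uniqueness and the quantitative bound \eqref{2-13} are contained in \eqref{2-7}, so only \emph{existence} needs to be proved, and this is a routine Fredholm argument. First I would recast \eqref{2-6} in variational form. Let $\mathcal H_\varepsilon$ be the closed subspace of $H^1(\Pi)$ of functions obeying the $\varphi$-periodicity/symmetry conditions inherited from the $k$-fold symmetry and the orthogonality constraint
\[
\mathcal H_\varepsilon:=\Bigl\{\phi\in H^1(\Pi)\ \text{symmetric}\ :\ \textstyle\int_\Pi \phi\,(-\Delta_{\mathbb S^2})Z_\varepsilon\,d\boldsymbol\sigma=\int_\Pi\nabla_{\mathbb S^2}\phi\cdot\nabla_{\mathbb S^2}Z_\varepsilon\,d\boldsymbol\sigma=0\Bigr\},
\]
and let
\[
B_\varepsilon(\phi,\zeta):=\int_\Pi\nabla_{\mathbb S^2}\phi\cdot\nabla_{\mathbb S^2}\zeta\,d\boldsymbol\sigma-\sum_{i=1}^k\frac{2}{s_\varepsilon}\int_{\mathcal C_{s_\varepsilon}(\boldsymbol z_i^+)}\phi\,\zeta\,d\ell-\sum_{i=1}^k\frac{2}{s_\varepsilon}\int_{\mathcal C_{s_\varepsilon}(\boldsymbol z_i^-)}\phi\,\zeta\,d\ell
\]
be the bilinear form of $\mathbb L_\varepsilon$. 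Since $(-\Delta_{\mathbb S^2})Z_\varepsilon$ annihilates $\mathcal H_\varepsilon$, testing the first line of \eqref{2-6} against $\zeta\in\mathcal H_\varepsilon$ eliminates the $\Lambda$-term, so \eqref{2-6} is equivalent to: find $\phi\in\mathcal H_\varepsilon$ with $B_\varepsilon(\phi,\zeta)=\langle\mathbf h,\zeta\rangle$ for all $\zeta\in\mathcal H_\varepsilon$, the multiplier being recovered afterwards from $\Lambda\int_\Pi|\nabla_{\mathbb S^2}Z_\varepsilon|^2=\langle\mathbb L_\varepsilon\phi-\mathbf h,Z_\varepsilon\rangle$, whose denominator equals $C_z\varepsilon^{-2}(1+o_\varepsilon(1))\neq0$ by the computation in Lemma~\ref{lem2-2}.

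Next I would split $B_\varepsilon$ as the coercive bounded form $a(\phi,\zeta)=\int_\Pi(\nabla_{\mathbb S^2}\phi\cdot\nabla_{\mathbb S^2}\zeta+\phi\zeta)\,d\boldsymbol\sigma$ minus the remainder $k_\varepsilon(\phi,\zeta)=\int_\Pi\phi\zeta\,d\boldsymbol\sigma+\sum_i\frac{2}{s_\varepsilon}\int_{\mathcal C_{s_\varepsilon}(\boldsymbol z_i^\pm)}\phi\zeta\,d\ell$; the operator on $\mathcal H_\varepsilon$ associated with $k_\varepsilon$ through the Riesz isomorphism of $a$ is compact, the bulk part by Rellich's theorem and the curve parts by the continuity of the trace $H^1(\Pi)\to H^{1/2}(\mathcal C_{s_\varepsilon}(\boldsymbol z_i^\pm))$ followed by the compact embedding into $L^2(\mathcal C_{s_\varepsilon}(\boldsymbol z_i^\pm))$. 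Hence $\mathbb L_\varepsilon$ acts on $\mathcal H_\varepsilon$ as ``identity minus compact'', and by the Fredholm alternative surjectivity is equivalent to injectivity. Injectivity is exactly Lemma~\ref{lem2-2}: a solution of the homogeneous problem ($\mathbf h=0$) lies in $C(\Pi)\cap L^\infty$ by interior elliptic regularity (the only singularity of $Z_\varepsilon$ and of $\phi$ across $\mathcal C_{s_\varepsilon}$ being a jump of the normal derivative), so \eqref{2-7} with $\mathbf h=0$ forces $\phi=0$ and $\Lambda=0$. Therefore \eqref{2-6} has a unique solution $\phi_\varepsilon$ for each admissible $\mathbf h$; the map $\mathcal T_\varepsilon:\mathbf h\mapsto\phi_\varepsilon$ is linear because \eqref{2-6} is linear in $(\mathbf h,\phi,\Lambda)$, and \eqref{2-13} is the part of \eqref{2-7} not involving $\Lambda$, once elliptic regularity is used to pass from the $H^1$ solution produced by the Fredholm theory to the $\|\cdot\|_*$-plus-$L^p$-gradient framework of Lemma~\ref{lem2-2}.

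The argument is conceptually routine; the points that demand care are bookkeeping ones, so the ``main obstacle'' is really to set up the functional framework consistently rather than to prove anything new. One has to fix the function space on the period cell $\Pi=(0,\pi)\times(0,2\pi/k)$ so that the degeneracy of $\Delta_{\mathbb S^2}$ at the poles $\theta=0,\pi$ is harmless (these are the endpoints of $\Pi$ and are removable, as on the closed sphere), and one has to verify that the distributions $\phi\,\boldsymbol\delta_{\mathcal C_{s_\varepsilon}}$, $\mathbf h$ and $(-\Delta_{\mathbb S^2})Z_\varepsilon$ lie in the dual of $\mathcal H_\varepsilon$ with the advertised bounds — for the last of these this is precisely \eqref{2-9}. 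The whole construction is uniform in $\varepsilon\in(0,\varepsilon_0]$ because the single $\varepsilon_0$ and the single constant $c_0$ are those furnished by Lemma~\ref{lem2-2}. If one prefers to avoid the compact-trace bookkeeping, existence can equally be obtained by a Galerkin scheme in finite-dimensional subspaces of $\mathcal H_\varepsilon$, the uniform bound \eqref{2-7} supplying the compactness needed to pass to the limit.
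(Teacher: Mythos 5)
Your proposal is correct and rests on the same core mechanism as the paper: realize $\mathbb L_\varepsilon$ on the constrained (projected) space as ``identity minus compact,'' invoke the Fredholm alternative, and supply injectivity from the a priori estimate \eqref{2-7} of Lemma~\ref{lem2-2}, from which \eqref{2-13} and the linearity of $\mathcal T_\varepsilon$ also follow. The difference is in where the compactness is set up. The paper works directly in the sup-norm space $E_\varepsilon$ and writes \eqref{2-6} as $\phi_\varepsilon=\mathscr K\phi_\varepsilon+(-\Delta_{\mathbb S^2})_*^{-1}P_\varepsilon\mathbf h$, with $\mathscr K$ the composition of the (periodized) Green operator with multiplication by the curve deltas; compactness of $\mathscr K$ comes from the smoothing of the Green kernel, and no separate regularity step is needed because the solution is produced already in the $\|\cdot\|_*$ framework. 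You instead work variationally in $H^1(\Pi)$, split the bilinear form of $\mathbb L_\varepsilon$ into a coercive part plus a compact perturbation (Rellich for the bulk, compact trace for the curve terms), and recover $\Lambda$ a posteriori from the nondegeneracy $\int_\Pi|\nabla_{\mathbb S^2}Z_\varepsilon|^2\,d\boldsymbol\sigma=C_z\varepsilon^{-2}(1+o_\varepsilon(1))\neq0$. This buys a cleaner existence argument (Lax--Milgram plus compact perturbation, or equivalently your Galerkin alternative) at the cost of an extra elliptic-regularity bootstrap to pass from the $H^1$ weak solution to the $L^\infty$-plus-$L^p$-gradient setting in which \eqref{2-7} is stated; that bootstrap is standard here (the trace of an $H^1$ function on $\mathcal C_{s_\varepsilon}$ lies in $L^q$ for all finite $q$, so the curve measures and $\mathbf h$ sit in $W^{-1,p}$, giving $\phi\in W^{1,p}_{\mathrm{loc}}\hookrightarrow C^{0,\alpha}$), but it should be stated rather than only alluded to. Both routes yield the same operator $\mathcal T_\varepsilon$ and the same uniform constants $\varepsilon_0$, $c_0$ inherited from Lemma~\ref{lem2-2}.
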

	\begin{proof}
		Let $H(\Pi)$ be the Hilbert space endowed with the inner product
		\begin{equation*}
			[u,v]_{H(\Pi)}=\int_{\Pi}\left[\frac{\partial u}{\partial\theta}\frac{\partial v}{\partial \theta}+\frac{1}{\sin^2\theta}\frac{\partial u}{\partial\varphi}\frac{\partial v}{\partial \varphi}\right]d\boldsymbol\sigma.
		\end{equation*}
		We also introduce two spaces as follows. The first one is
		\begin{equation*}
			E_\varepsilon:=\left\{u\in H(\Pi)\,\,\, \big|\, \,\, \|u\|_*<\infty, \ k\mathrm{-fold \ symmetric}, \ [u,Z_\varepsilon]_{H(\Pi)}=0\right\}
		\end{equation*}
		with norm $||\cdot||_*$, and the second one is
		\begin{equation*}
			F_\varepsilon:=\left\{\mathbf h^* \in W^{-1,p}(B_{L\varepsilon}(\boldsymbol z_1^+)\cup B_{L\varepsilon}(\boldsymbol z_1^-)) \,\,\, \big| \,\,\, \ k\mathrm{-fold \ symmetric}, \ \int_{\Pi}Z_\varepsilon\mathbf h^*d\boldsymbol \sigma=0\right\}
		\end{equation*}
		with $p\in (2,+\infty]$. Then for $\phi_\varepsilon\in E_\varepsilon$, the problem \eqref{2-6} has an equivalent operator form.
		\begin{equation*}
			\begin{split}
				\phi_\varepsilon&=(-\Delta_{\mathbb S^2})_*^{-1}P_\varepsilon\left(\frac{2}{s_\varepsilon }\phi_\varepsilon\boldsymbol{\delta}_{\mathcal C_{s_\varepsilon}(\boldsymbol z^+_1)}+\frac{2}{s_\varepsilon }\phi_\varepsilon\boldsymbol{\delta}_{\mathcal C_{s_\varepsilon}(\boldsymbol z^-_1)}\right)+(-\Delta_{\mathbb S^2})_*^{-1} P_\varepsilon \mathbf h\\
				&=\mathscr K\phi_\varepsilon+(-\Delta_{\mathbb S^2})_*^{-1} P_\varepsilon \mathbf h,
			\end{split}
		\end{equation*}
		where
		\begin{equation*}
			(-\Delta_{\mathbb S^2})_*^{-1}u:=\sum\limits_{i=0}^{k-1}\int_{\Pi}G((\theta,\varphi+2i\pi/k),\boldsymbol z')u(\boldsymbol z')d\boldsymbol \sigma',
		\end{equation*}
		and $P_\varepsilon$ is the projection operator to $F_\varepsilon$. Since $Z_\varepsilon$ has a compact support owing 
		to the truncation $\chi_i^\pm(\boldsymbol z)$, by the definition of ${G_*}(\boldsymbol x,\boldsymbol x')$, 
		we see that $\mathscr K$ maps $E_\varepsilon$ to $E_\varepsilon$.
		
		Notice that $\mathscr K$ is a compact operator. In view of the Fredholm alternative, \eqref{2-6} has a unique solution if the homogeneous equation
		\begin{equation*}
			\phi_\varepsilon=\mathscr K\phi_\varepsilon
		\end{equation*}
		has only trivial solution in $E_\varepsilon$, which can be obtained from Lemma \ref{lem2-2}. Now we set
		$$\mathcal T_\varepsilon:=(\text{Id}-\mathscr K)^{-1}(-\Delta_{\mathbb S^2})_*^{-1} P_\varepsilon,$$
		and the estimate \eqref{2-13} holds by Lemma \ref{lem2-2}. The proof is thus complete.
	\end{proof}
	
	\subsection{The reduction}
	In order to apply the contraction mapping theorem, we give a delicate estimate for the nonlinear term $N_\varepsilon(\phi_\varepsilon)$, which relies on an expansion near the vorticity domain $\Omega_{1,\varepsilon}^\pm$. 
	
	By denoting $\Pi^+:=(0,\pi/2)\times\left(0,2\pi/k\right)$, and 
	$$\tilde v(\boldsymbol y)=v(s_\varepsilon y_1+\theta_0,s_\varepsilon\sin^{-1}(s_\varepsilon y_1+\theta_0)y_2+\varphi_1^+)$$ for a general function $v$, we have following lemma concerning the estimate for the level set $\{\boldsymbol z\mid \psi_\varepsilon(\boldsymbol z)+W_\varepsilon\sin\theta_0=\mu_\varepsilon\}$.
	\begin{lemma}\label{lem2-4}
		Suppose that $\tilde\phi$ is a function satisfying
		\begin{equation}\label{2-14}
			\|\nabla\tilde\phi\|_{L^\infty(B_L(\boldsymbol 0))}+\|\tilde\phi\|_{L^\infty(B_L(\boldsymbol 0))}=O(\varepsilon).
		\end{equation}
		with $p\in(2,+\infty]$. Then the set
		$$\tilde{\mathbf\Gamma}_{\varepsilon,\tilde\phi}:=\{\boldsymbol y \mid \tilde\Psi_\varepsilon(\boldsymbol y)+\tilde\phi+W_\ep\cos(s_\varepsilon y_1+\theta_0)=\mu_\varepsilon\}$$
		is a closed convex curve in $\mathbb{R}^2$, and
		\begin{equation}\label{2-15}
			\begin{split}
				\tilde{\mathbf\Gamma}_{\varepsilon,\tilde\phi}(\xi)&=(1+t_\varepsilon(\xi))(\cos\xi,\sin\xi)\\
				&=(1+t_{\varepsilon,\tilde\phi}(\xi)+t_{\varepsilon,\mathcal H}(\xi)+O(\varepsilon^2))(\cos\xi,\sin\xi)\\
				&=\left(1+\frac{1}{s_\varepsilon\beta_\varepsilon}\tilde\phi(\cos\xi,\sin\xi)\right)(\cos\xi,\sin\xi)+\frac{\mathcal H(\xi)}{s_\varepsilon\beta_\varepsilon}(\cos\theta,0)\\
				&\quad+O(\varepsilon^2), \quad \xi\in (0,2\pi]
			\end{split}
		\end{equation}
		with $\|t_\varepsilon\|_{C^1[0,2\pi)}=O(\ep)$, and 
		\begin{align*}
			\mathcal H(\xi)&=\bigg\langle\kappa \sum\limits_{i=2}^k\nabla G(\boldsymbol z_1^+,\boldsymbol z^+_i)+\kappa \nabla H(\boldsymbol z_1^+,\boldsymbol z^+_1)-\kappa \sum\limits_{i=1}^k\nabla G(\boldsymbol z_1^+,\boldsymbol z^-_i)\\
			&\quad-(W_\ep\sin\theta_0,0),\quad (s_\varepsilon\cos\xi,s_\varepsilon\sin^{-1}(s_\varepsilon y_1+\theta_0)\sin\xi)\bigg\rangle.
		\end{align*}
	    Moreover, it holds
		\begin{equation}\label{2-16}
			\left|\tilde{\mathbf\Gamma}_{\varepsilon,\tilde\phi_1}-\tilde{\mathbf\Gamma}_{\varepsilon,\tilde\phi_2}\right|=\left(\frac{1}{s_\varepsilon\beta_\varepsilon}+O(\varepsilon)\right)\|\tilde\phi_1-\tilde\phi_2\|_{L^\infty(B_L(\boldsymbol 0))}
		\end{equation}
		for two functions $\tilde\phi_1,\tilde\phi_2$ satisfying \eqref{2-14}.
	\end{lemma}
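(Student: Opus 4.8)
The plan is to transfer the analysis to the rescaled inner variable $\boldsymbol y$ centered at $\boldsymbol z_1^+$, where $\tilde{\mathbf\Gamma}_{\varepsilon,\tilde\phi}$ becomes a small $C^1$ perturbation of the unit circle, realize it as a radial graph $|\boldsymbol y|=1+t_\varepsilon(\xi)$, and solve the resulting scalar equation by the implicit function theorem. Two structural facts are used throughout. With the change of variables defining $\tilde v$ one checks that $|A(\boldsymbol z-\boldsymbol z_1^+)|=s_\varepsilon|\boldsymbol y|$ identically, so the rescaled leading profile $\tilde V^+_{1,\varepsilon}$ is \emph{exactly} radial in $\boldsymbol y$, equal to $\frac{\kappa}{2\pi}\ln\frac1\varepsilon$ plus the rescaled Rankine profile $2s_\varepsilon\beta_\varepsilon w(\boldsymbol y)$, a $C^1$ function whose normal derivative at $\{|\boldsymbol y|=1\}$ has modulus $s_\varepsilon\beta_\varepsilon$ by \eqref{2-1}; moreover $s_\varepsilon\beta_\varepsilon=\frac{\kappa}{2\pi}\frac{|\ln\varepsilon|}{|\ln s_\varepsilon|}$ is of order one (tending to $\kappa/(2\pi)$), so $1/(s_\varepsilon\beta_\varepsilon)$ is bounded while the neglected corrections carry genuine powers of $\varepsilon$.

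First I would write $\tilde\Psi_\varepsilon=\tilde V^+_{1,\varepsilon}+\mathcal R_\varepsilon$, where $\mathcal R_\varepsilon$ collects the remaining pieces of $\Psi_\varepsilon$ — the $V^\pm_{i,\varepsilon}$, $R^\pm_{i,\varepsilon}$ centered away from $\boldsymbol z_1^+$ and $R^+_{1,\varepsilon}$ — all of which are $C^1$ near $\boldsymbol z_1^+$, with each $R^\pm_{i,\varepsilon}$ an $O(\varepsilon^2)$ perturbation of $\kappa H(\cdot,\boldsymbol z_i^\pm)$. Taylor-expanding $\mathcal R_\varepsilon$ at $\boldsymbol z_1^+$ on the ball of radius $O(\varepsilon)$ produces the constant $\kappa\sum_{i\ge2}G(\boldsymbol z_1^+,\boldsymbol z_i^+)+\kappa H(\boldsymbol z_1^+,\boldsymbol z_1^+)-\kappa\sum_i G(\boldsymbol z_1^+,\boldsymbol z_i^-)$, plus the linear term $\langle\kappa\sum_{i\ge2}\nabla G(\boldsymbol z_1^+,\boldsymbol z_i^+)+\kappa\nabla H(\boldsymbol z_1^+,\boldsymbol z_1^+)-\kappa\sum_i\nabla G(\boldsymbol z_1^+,\boldsymbol z_i^-),\,(\theta-\theta_0,\varphi-\varphi_1^+)\rangle$, plus $O(s_\varepsilon^2)$; likewise $W_\varepsilon\cos(s_\varepsilon y_1+\theta_0)=W_\varepsilon\cos\theta_0-\langle(W_\varepsilon\sin\theta_0,0),(\theta-\theta_0,\varphi-\varphi_1^+)\rangle+O(s_\varepsilon^2)$. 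On the circle $(\theta-\theta_0,\varphi-\varphi_1^+)=(s_\varepsilon\cos\xi,\,s_\varepsilon\sin^{-1}(s_\varepsilon y_1+\theta_0)\sin\xi)$, so the two linear parts assemble into $\mathcal H(\xi)$, and the constants together with $\frac{\kappa}{2\pi}\ln\frac1\varepsilon$ cancel $\mu_\varepsilon$ by its defining relation \eqref{2-3}.

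Substituting $\boldsymbol y=(1+t)(\cos\xi,\sin\xi)$ into $\tilde\Psi_\varepsilon+\tilde\phi+W_\varepsilon\cos(s_\varepsilon y_1+\theta_0)=\mu_\varepsilon$ then reduces it to $F_\varepsilon(t,\xi):=-s_\varepsilon\beta_\varepsilon t+\mathcal H(\xi)+\tilde\phi\big((1+t)(\cos\xi,\sin\xi)\big)+O(\varepsilon^2)=0$, the $O(\varepsilon^2)$ absorbing the quadratic Taylor remainders, the $t^2$-term from $(1+t)^2$ (admissible since $s_\varepsilon\beta_\varepsilon t^2=O(1)\cdot O(\varepsilon^2)$), and the curvature of the profile. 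Since $\partial_t F_\varepsilon=-s_\varepsilon\beta_\varepsilon+O(\varepsilon)$ is bounded away from zero and $F_\varepsilon$ is $C^1$, the $C^1$ implicit function theorem gives a unique $t_\varepsilon(\xi)\in C^1[0,2\pi)$; solving to leading order yields $t_\varepsilon=\frac1{s_\varepsilon\beta_\varepsilon}\tilde\phi(\cos\xi,\sin\xi)+\frac{\mathcal H(\xi)}{s_\varepsilon\beta_\varepsilon}+O(\varepsilon^2)$, which is \eqref{2-15}, and differentiating $F_\varepsilon(t_\varepsilon(\xi),\xi)\equiv0$ in $\xi$ with $\|\nabla\tilde\phi\|_{L^\infty(B_L)}=O(\varepsilon)$ gives $\|t_\varepsilon\|_{C^1[0,2\pi)}=O(\varepsilon)$. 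The stability bound \eqref{2-16} is the Lipschitz dependence of this solution on $\tilde\phi$: subtracting the equations for $\tilde\phi_1,\tilde\phi_2$ and dividing by $-\partial_t F_\varepsilon=s_\varepsilon\beta_\varepsilon(1+O(\varepsilon))$ gives $\big|t^{(1)}_\varepsilon-t^{(2)}_\varepsilon\big|=\big(\tfrac1{s_\varepsilon\beta_\varepsilon}+O(\varepsilon)\big)\|\tilde\phi_1-\tilde\phi_2\|_{L^\infty(B_L)}$. Finally, that $\tilde{\mathbf\Gamma}_{\varepsilon,\tilde\phi}$ is a single closed curve bounding a convex region follows because on $\{\tfrac12\le|\boldsymbol y|\le2\}$ the defining function is a $C^1$-small perturbation of the strictly concave radial profile $\frac{\kappa}{2\pi}\ln\frac1\varepsilon+2s_\varepsilon\beta_\varepsilon w(\boldsymbol y)$ (gradient of modulus $\gtrsim s_\varepsilon\beta_\varepsilon$, Hessian $\asymp-s_\varepsilon\beta_\varepsilon\,\mathrm{Id}$), so its superlevel set near the critical value is a strictly convex perturbation of the disk; this is the one step where a little more than the bare $C^1$ bound on $\tilde\phi$ is invoked, namely the $C^{1,\alpha}$ regularity supplied by elliptic estimates.

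The main obstacle is not a single estimate but the uniform bookkeeping of errors: one must verify that every non-explicit contribution — the quadratic Taylor remainders of $\mathcal R_\varepsilon$ and of $W_\varepsilon\cos\theta$, the discrepancy between $\sin^{-1}(s_\varepsilon y_1+\theta_0)$ and $\sin^{-1}\theta_0$ in the rescaling, the $t^2$-term, and the repeated passage between the inner (parabolic) and outer (logarithmic) branches of $\tilde V^+_{1,\varepsilon}$ along a curve lying near but not on $\{|\boldsymbol y|=1\}$, where the profile is only $C^1$ — is genuinely $O(\varepsilon^2)$ uniformly in $\xi\in[0,2\pi)$, and that the regularity actually available for $\tilde\phi$ suffices both for the $C^1$ implicit function theorem and, via elliptic estimates, for the convexity claim.
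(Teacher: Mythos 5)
Your proposal is correct and follows essentially the same route as the paper: rescale to the inner variable, expand the regular parts of $\tilde\Psi_\varepsilon$ and $W_\varepsilon\cos\theta$ so that the constants cancel against $\mu_\varepsilon$ via \eqref{2-3} and the linear terms assemble into $\mathcal H(\xi)$, identify the normal derivative $s_\varepsilon\beta_\varepsilon+O(\varepsilon)$ on $\{|\boldsymbol y|=1\}$ from \eqref{2-1}--\eqref{2-2}, and apply the implicit function theorem to the radial graph $|\boldsymbol y|=1+t_\varepsilon(\xi)$. You carry out the Taylor-expansion bookkeeping and the Lipschitz dependence in \eqref{2-16} more explicitly than the paper does (which simply invokes the implicit function theorem), and your honest caveat about the convexity claim requiring more than $C^1$ control is a point the paper itself glosses over.
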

	\begin{proof}
		Let $(y_1,y_2)=(\cos\xi,\sin\xi)$, and the inner product $\mathcal H(\xi)$ is a regular term of order $O(\varepsilon)$. Then by the definition of $\mu_\varepsilon$ in \eqref{2-3}, we have
		\begin{equation*}
			\tilde\psi_\varepsilon(\boldsymbol y)-W_\ep\cos(s_\varepsilon y_1+\theta_0)-\mu_\varepsilon=\tilde\phi+\mathcal H+O(\varepsilon^2).
		\end{equation*} 
		In view of \eqref{2-1} and \eqref{2-2}, it holds
		\begin{align*}
			&\nabla\left[\tilde\psi_\varepsilon(\boldsymbol y)-W_\ep\cos(s_\varepsilon y_1+\theta_0)-\mu_\varepsilon\right]\bigg|_{|\boldsymbol y|=1}\\
			=&\nabla\left[\tilde V^+_{1,\varepsilon}(\boldsymbol y)-\frac{\kappa}{2\pi}\ln\frac{1}{\varepsilon}\right]\bigg|_{|\boldsymbol y|=1}+O(\varepsilon)=s_\varepsilon\beta_\varepsilon+O(\varepsilon).
		\end{align*}
		Using the implicit function theorem, we have 
		$$\|t_{\varepsilon,\tilde\phi}\|_{C^1[0,2\pi)}=\frac{\| \tilde\phi\|_{C^1(B_L(\boldsymbol 0))}}{s_\varepsilon\beta_\varepsilon+O(\varepsilon)}=O(\varepsilon),$$ 
		and 
		$$\|t_{\varepsilon,\mathcal H}\|_{C^1[0,2\pi)}=\frac{\|\mathcal H\|_{C^1(B_L(\boldsymbol 0))}}{s_\varepsilon\beta_\varepsilon+O(\varepsilon)}=O(\varepsilon).$$
		Hence $\tilde{\mathbf\Gamma}_{\varepsilon,\tilde\phi}(\xi)$ is a $C^1$ closed curve. The quantitative estimates \eqref{2-15} and \eqref{2-16} also follows from the implicit function theorem directly.
	\end{proof}
	
	In view of Lemma \ref{lem2-3}, we consider
	\begin{equation*}
		\phi_\varepsilon=\mathcal T_\varepsilon N_\varepsilon(\phi_\varepsilon).
	\end{equation*}
	In the next lemma, we show the smallness and the contraction property for the error term $N_\varepsilon(\phi_\varepsilon)$, 
	so that the contraction mapping theorem is applied to obtain the existence of $\phi_\varepsilon$ in $E_\varepsilon$.
	\begin{lemma}\label{lem2-5}
		There exists a small $\varepsilon_0>0$ such that for any $\varepsilon\in(0,\varepsilon_0]$, there is a unique solution $\phi_\varepsilon\in E_\varepsilon$ to \eqref{2-6}. Moreover  $\phi_\varepsilon$ satisfies
		\begin{equation}\label{2-17}
			\|\phi_\varepsilon\|_*+\varepsilon^{1-\frac{2}{p}}\|\nabla\phi_\varepsilon\|_{L^p(B_{L\varepsilon}(\boldsymbol z_1^+)\cup B_{L\varepsilon}(\boldsymbol z_1^-))}= O(\varepsilon)
		\end{equation}
		for $p\in(2,+\infty]$.
	\end{lemma}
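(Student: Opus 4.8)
The plan is to use Lemma~\ref{lem2-3} to recast the projected problem \eqref{2-6} with $\mathbf h=N_\varepsilon(\phi)$ as the genuinely nonlinear fixed point equation $\phi=\mathcal A_\varepsilon\phi:=\mathcal T_\varepsilon N_\varepsilon(\phi)$, and to solve it by the Banach contraction theorem on the closed ball
$$\mathcal B_\varepsilon:=\Big\{\phi\in E_\varepsilon \ \Big|\ \|\phi\|_*+\varepsilon^{1-\frac{2}{p}}\|\nabla\phi\|_{L^p(B_{L\varepsilon}(\boldsymbol z_1^+)\cup B_{L\varepsilon}(\boldsymbol z_1^-))}\le M\varepsilon\Big\}$$
for a large constant $M$ to be fixed. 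By Lemma~\ref{lem2-3} one has $\|\mathcal A_\varepsilon\phi\|_*+\varepsilon^{1-\frac2p}\|\nabla\mathcal A_\varepsilon\phi\|_{L^p}\le c_0\,\varepsilon^{1-\frac2p}\|N_\varepsilon(\phi)\|_{W^{-1,p}(B_{L\varepsilon}(\boldsymbol z_1^+)\cup B_{L\varepsilon}(\boldsymbol z_1^-))}$, so everything reduces to two estimates on $N_\varepsilon$: a smallness bound $\varepsilon^{1-\frac2p}\|N_\varepsilon(\phi)\|_{W^{-1,p}}=O(\varepsilon)$ uniform for $\phi\in\mathcal B_\varepsilon$, and a Lipschitz bound $\varepsilon^{1-\frac2p}\|N_\varepsilon(\phi_1)-N_\varepsilon(\phi_2)\|_{W^{-1,p}}=o_\varepsilon(1)\,\|\phi_1-\phi_2\|_*$ on $\mathcal B_\varepsilon$. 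Granting these, choosing $M$ large gives $\mathcal A_\varepsilon(\mathcal B_\varepsilon)\subset\mathcal B_\varepsilon$, the Lipschitz bound with $\varepsilon$ small makes $\mathcal A_\varepsilon$ a $\tfrac12$-contraction, and the unique fixed point satisfies \eqref{2-17} since it lies in $\mathcal B_\varepsilon$; combining the a priori bound of Lemma~\ref{lem2-2} with the smallness of $N_\varepsilon$ confines any solution in $E_\varepsilon$ to $\mathcal B_\varepsilon$, which upgrades uniqueness to all of $E_\varepsilon$.

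\textbf{The smallness estimate (main obstacle).} This is the heart of the matter. I would test $N_\varepsilon^+(\phi)$ against an arbitrary $\zeta\in W^{1,p'}_0(B_{L\varepsilon}(\boldsymbol z_1^+))$ and pass to the rescaled variables $\boldsymbol y$ of Lemma~\ref{lem2-4}, where $d\boldsymbol\sigma=s_\varepsilon^2\,d\boldsymbol y$ and, by \eqref{2-1}, $s_\varepsilon^2/\varepsilon^2=2s_\varepsilon\beta_\varepsilon=O(1)$. Writing $N_\varepsilon^+(\phi)=\frac1{\varepsilon^2}\big(\mathbf 1_{\tilde P}-\mathbf 1_{\{|\boldsymbol y|<1\}}\big)-\frac1{\varepsilon^2}\frac{2}{s_\varepsilon}\phi\,\boldsymbol\delta_{\mathcal C_{s_\varepsilon}(\boldsymbol z_1^+)}$, I use the parametrization $\tilde{\mathbf\Gamma}_{\varepsilon,\tilde\phi}(\xi)=(1+t_\varepsilon(\xi))(\cos\xi,\sin\xi)$ from \eqref{2-15}, with $t_\varepsilon=\frac{1}{s_\varepsilon\beta_\varepsilon}(\tilde\phi+\mathcal H)+O(\varepsilon^2)$ and $\|t_\varepsilon\|_{C^1[0,2\pi)}=O(\varepsilon)$, to compute in polar coordinates $\frac1{\varepsilon^2}\int_{\tilde P\triangle\{|\boldsymbol y|<1\}}\tilde\zeta\,d\boldsymbol\sigma=\frac{s_\varepsilon^2}{\varepsilon^2}\int_0^{2\pi}\!\!\int_1^{1+t_\varepsilon(\xi)}\tilde\zeta\,\rho\,d\rho\,d\xi$. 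The leading term $\frac{s_\varepsilon^2}{\varepsilon^2}\int_0^{2\pi}\frac{\tilde\phi(\cos\xi,\sin\xi)}{s_\varepsilon\beta_\varepsilon}\tilde\zeta(1,\xi)\,d\xi$ is, precisely by the definition of $\beta_\varepsilon$ in \eqref{2-1} and of $\mathcal C_{s_\varepsilon}$, matched by $\frac1{\varepsilon^2}\frac{2}{s_\varepsilon}\int_{\mathcal C_{s_\varepsilon}(\boldsymbol z_1^+)}\phi\,\tilde\zeta\,\sin\theta$, so it cancels. What remains is: (i) the explicit regular contribution $\frac{s_\varepsilon^2}{\varepsilon^2}\int_0^{2\pi}\frac{\mathcal H(\xi)}{s_\varepsilon\beta_\varepsilon}\tilde\zeta(1,\xi)\,d\xi$, which is $O(\varepsilon)$ times a trace of $\zeta$ since $\mathcal H=O(s_\varepsilon)=O(\varepsilon)$; (ii) the quadratic remainder $O(\|t_\varepsilon\|_{C^1}^2)=O(\|\phi\|_*^2)+O(\varepsilon^2)$ from expanding $\int_1^{1+t_\varepsilon}\rho\,d\rho$, from the weight mismatch $\sin\theta$ versus $\sin\theta_0$, and from the curvature gap between $\tilde{\mathbf\Gamma}_{\varepsilon,\tilde\phi}$ and $\{|\boldsymbol y|=1\}$; (iii) the $O(\varepsilon^2)$ term already isolated in \eqref{2-15}. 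Tracking the powers of $\varepsilon$ produced by rescaling the $W^{1,p'}$ test function and using Poincar\'e and trace inequalities on $B_L(\boldsymbol 0)$, each of (i)--(iii) contributes at most $O(\varepsilon)$ to $\varepsilon^{1-\frac2p}\|N_\varepsilon^+(\phi)\|_{W^{-1,p}}$ for $\phi\in\mathcal B_\varepsilon$; the negative part $N_\varepsilon^-(\phi)$ is handled identically near $\boldsymbol z_1^-$.

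\textbf{The Lipschitz estimate and conclusion.} For the contraction property I subtract the above expressions for $\phi_1$ and $\phi_2$ in $\mathcal B_\varepsilon$: the $\boldsymbol\delta$-measure terms differ linearly in $\phi_1-\phi_2$, the region-difference terms are controlled by the Lipschitz dependence \eqref{2-16} of $\tilde{\mathbf\Gamma}_{\varepsilon,\tilde\phi}$ on $\tilde\phi$, and — crucially — the leading linear parts again cancel against the $\boldsymbol\delta$-terms, so that $N_\varepsilon(\phi_1)-N_\varepsilon(\phi_2)$ is quadratically small and $\varepsilon^{1-\frac2p}\|N_\varepsilon(\phi_1)-N_\varepsilon(\phi_2)\|_{W^{-1,p}}\le C\big(\varepsilon+\|\phi_1\|_*+\|\phi_2\|_*\big)\|\phi_1-\phi_2\|_*=o_\varepsilon(1)\|\phi_1-\phi_2\|_*$. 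Composing with the operator bound for $\mathcal T_\varepsilon$ makes $\mathcal A_\varepsilon$ a contraction on $\mathcal B_\varepsilon$ once $\varepsilon_0$ is small, and the Banach fixed point theorem produces the unique $\phi_\varepsilon\in E_\varepsilon$ together with \eqref{2-17}. I expect the step that requires the most care is the exponent bookkeeping in (ii)--(iii): one must verify that every quadratic and curvature remainder, after the $\boldsymbol y$-rescaling and the trace estimate on the rescaled test function, genuinely lands at order $\varepsilon$ rather than merely $O(1)$, since it is exactly the cancellation of the linearization by the $\boldsymbol\delta$-correction terms in $N_\varepsilon$ that buys this gain.
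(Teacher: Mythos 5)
Your proposal is correct and follows essentially the same route as the paper: a contraction mapping argument for $\mathcal G_\varepsilon=\mathcal T_\varepsilon N_\varepsilon$ on the ball of radius $O(\varepsilon)$ in $E_\varepsilon$, with both the smallness and the Lipschitz estimates for $N_\varepsilon$ obtained by rescaling, using the level-set parametrization of Lemma \ref{lem2-4}, and exploiting the cancellation (via \eqref{2-1}) of the linearized indicator contribution against the $\boldsymbol\delta_{\mathcal C_{s_\varepsilon}}$ correction terms. The only difference is your additional remark upgrading uniqueness from $\mathcal B_\varepsilon$ to all of $E_\varepsilon$, which the paper leaves implicit.
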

	\begin{proof}
		Denote $\mathcal G_\varepsilon:=\mathcal T_\varepsilon R_\varepsilon$, and a neighborhood of origin in $E_\varepsilon$ as
		\begin{equation*}
			\mathcal B_\varepsilon:=E_\varepsilon\cap \left\{\phi \ | \ \|\phi\|_*+\varepsilon^{1-\frac{2}{p}}\|\nabla\phi\|_{L^p(B_{L\varepsilon}(\boldsymbol z_1^+)\cup B_{L\varepsilon}(\boldsymbol z_1^-))}\le C\varepsilon, \ p\in(2,\infty]\right\}
		\end{equation*}
		with $C$ a large positive constant. We will show that $\mathcal G_\varepsilon$ is a contraction map from $\mathcal B_\varepsilon$ to $\mathcal B_\varepsilon$, so that a unique fixed point $\phi_\varepsilon$ can be obtained by the contraction mapping theorem. Actually, letting $\mathbf h=N_\varepsilon(\phi)$ for $\phi\in\mathcal B_\varepsilon$, and noticing that $N_\varepsilon(\phi)$ satisfies the assumptions for $\mathbf h$ in Lemma \ref{lem2-4}, we obtain
		\begin{equation*}
			\|\mathcal T_\varepsilon N_\varepsilon(\phi)\|_*+\varepsilon^{1-\frac{2}{p}}\|\nabla\mathcal T_\varepsilon N_\varepsilon(\phi)\|_{L^p(B_{L\varepsilon}(\boldsymbol z_1^+)\cup B_{L\varepsilon}(\boldsymbol z_1^-))}\le c_0\varepsilon^{1-\frac{2}{p}}\|N_\varepsilon(\phi) \|_{W^{-1,p}(B_{L\varepsilon}(\boldsymbol z_1^+)\cup B_{L\varepsilon}(\boldsymbol z_1^-))}.
		\end{equation*}
		
		To begin with, we are to show that $\mathcal G_\varepsilon$ maps $\mathcal B_\varepsilon$ continuously into itself. For simplicity, we only consider $N_\varepsilon^+(\phi)$ restricted in $\Pi^+$. For each $\zeta(\boldsymbol y)\in C_0^\infty(B_{L}(\boldsymbol 0))$, by applying the estimates in Lemma \ref{lem2-4}, we derive
		\begin{align*}
				&\quad\frac{s_\varepsilon^2}{\varepsilon^2}\int_{B_L(\boldsymbol 0)}\left[\boldsymbol1_{\{\tilde\psi_\varepsilon+W_\ep\cos(s_\varepsilon y_1+\theta_0)>\mu_\varepsilon\}}-\boldsymbol 1_{\{\tilde V_{1,\varepsilon}^+>\frac{\kappa}{2\pi}\ln\frac{1}{\varepsilon}\}}\right]\zeta d\boldsymbol y-2\int_0^{2\pi}\tilde\phi \zeta(1,\xi)d\xi\\
				&=\frac{s_\varepsilon^2}{\varepsilon^2}\int_0^{2\pi}\int_1^{1+t_\varepsilon(\xi)}t\zeta(t,\xi)dtd\xi-2\int_0^{2\pi}\tilde\phi\zeta(1,\xi)d\xi\\
				&=\frac{s_\varepsilon^2}{\varepsilon^2}\int_0^{2\pi}\int_1^{1+t_\varepsilon(\xi)}t\zeta(1,\xi)dtd\xi-2\int_0^{2\pi}\tilde\phi\zeta(1,\xi)d\xi\\
				& \ \ \ +\frac{s_\varepsilon^2}{\varepsilon^2} \int_0^{2\pi}\int_1^{1+t_\varepsilon(\xi)}t(\zeta(t,\theta)-\zeta(1,\xi))dtd\xi\\
				&=\frac{s_\varepsilon^2}{\varepsilon^2}\int_0^{2\pi}\frac{\tilde\phi(1,\xi)}{s_\varepsilon\beta_\varepsilon} \zeta(1,\xi)d\xi-2\int_0^{2\pi}\tilde\phi\zeta(1,\xi)d\xi\\
				& \ \ \ +\frac{s_\varepsilon^2}{\varepsilon^2}\int_0^{2\pi}\int_1^{1+t_\varepsilon(\xi)}t\int_1^t\frac{\partial \tilde\varphi(r,\xi)}{\partial r}drdtd\xi=O(\varepsilon)\|\zeta\|_{W^{1,p'}(B_{L}(\boldsymbol 0))},
		\end{align*}
		where we have used the estimate for the gradient value $\beta_\varepsilon$ in \eqref{2-1}. Hence it holds
		\begin{equation*}
			\varepsilon^{1-\frac{2}{p}}\|N_\varepsilon(\phi)\|_{W^{-1,p}(B_{L\varepsilon}(\boldsymbol z_1^+)\cup B_{L\varepsilon}(\boldsymbol z_1^-))}=O(\varepsilon),
		\end{equation*}
		and
		\begin{equation*}
			\|\mathcal T_\varepsilon N_\varepsilon(\phi)\|_*+\varepsilon^{1-\frac{2}{p}}\|\nabla\mathcal T_\varepsilon N_\varepsilon(\phi)\|_{L^p(B_{L\varepsilon}(\boldsymbol z_1^+)\cup B_{L\varepsilon}(\boldsymbol z_1^-))}=O(\varepsilon)
		\end{equation*}
		for $p\in(2,+\infty]$. Thus the operator $\mathcal G_\varepsilon$ indeed maps $\mathcal B_\varepsilon$ to $\mathcal B_\varepsilon$ continuously.

		In the next step, we are to verify that $\mathcal G_\varepsilon$ is a contraction mapping under the norm
		\begin{equation*}
			\|\cdot\|_{\mathcal G_\varepsilon}=\|\cdot\|_*+\varepsilon^{1-\frac{2}{p}}\|\cdot\|_{W^{1,p}(B_{L\varepsilon}(\boldsymbol z_1^+)\cup B_{L\varepsilon}(\boldsymbol z_1^-))}, \quad p\in(2,+\infty].
		\end{equation*}
		We already know that $\mathcal B_\varepsilon$ is close under this norm. Let $\phi_1$ and $\phi_2$ be two functions in $\mathcal B_\varepsilon$. By Lemma \ref{lem2-2} and symmetry of the problem with respect to $\theta=\pi/2$, we have
		\begin{equation*}
			\|\mathcal G_\varepsilon\phi_1-\mathcal G_\varepsilon\phi_2\|_{\mathcal G_\varepsilon}\le C\varepsilon^{1-\frac{2}{p}}\|N_\varepsilon^+(\phi_1)-N_\varepsilon^+(\phi_2) \|_{W^{-1,p}(B_{L\varepsilon}(\boldsymbol z_1^+))},
		\end{equation*}
		where
		\begin{align*}
				N_\varepsilon^+(\phi_1)&-N_\varepsilon^+(\phi_2)\\
				&=\frac{1}{\varepsilon^2}\left(\boldsymbol 1_{\{\Psi_\varepsilon+\phi_1+W_\ep\cos\theta>\mu_\varepsilon\}}-\boldsymbol1_{\{\Psi_\varepsilon+\phi_2+W_\ep\cos\theta>\mu_\ep\}}-2\left[\phi_1(r,\xi)-\phi_2(r,\xi)\right]\boldsymbol\delta_{\mathcal C_{s_\varepsilon}(\boldsymbol z_1^+)}\right).
		\end{align*}
		According to \eqref{2-16} in Lemma \ref{lem2-4}, for each $\zeta\in C_0^\infty(B_L(\boldsymbol 0))$, it holds
		\begin{align*}
				&\quad\,\, \frac{s_\varepsilon^2}{\varepsilon^2}\int_{B_L(\boldsymbol 0)}\left[\boldsymbol1_{\{\{\tilde\Psi_\varepsilon+\tilde\phi_1+W_\ep\cos(s_\varepsilon y_1+\theta_0)>\mu_\varepsilon\}\}\}}-\boldsymbol1_{\{\tilde\Psi_\varepsilon+\tilde\phi_1+W_\ep\cos(s_\varepsilon y_1+\theta_0)>\mu_\varepsilon\}}\right]\zeta d\boldsymbol y\\
				&=\frac{s_\varepsilon^2}{\varepsilon^2}\int_0^{2\pi}(t_{\varepsilon,\tilde\phi_1}-t_{\varepsilon,\tilde\phi_2})\zeta(1,\xi)d\xi+o_\varepsilon(1)\|\tilde\phi_1-\tilde\phi_2\|_{L^\infty(B_L(\boldsymbol 0))}\|\zeta\|_{W^{1,p'}(B_L(\boldsymbol 0))}.
		\end{align*}
		By denoting $\phi^*=\phi_1-\phi_2$, we then obtain the expansion
		\begin{align*}
				&\quad t_{\varepsilon,\tilde\phi_1}-t_{\varepsilon,\tilde\phi_2}\\
				&=-s_\varepsilon\beta_\varepsilon\left(\phi_*(1,\xi)+\int_1^{1+t_{\varepsilon,1}(\xi)}\frac{\partial\tilde\phi_*(r,\xi)}{\partial r}dr+\int_{1+t_{\varepsilon,1}(\xi)}^{1+t_{\varepsilon,2}(\xi)}\frac{\partial\tilde\phi_2(r,\xi)}{\partial r}dr\right).
		\end{align*}
		Then using the definition of $\mathcal \beta_\varepsilon$ in \eqref{2-1}, one can deduce that
		\begin{align*}
				&\quad\frac{s_\varepsilon^2}{\varepsilon^2}\int_0^{2\pi}(t_{\varepsilon,\tilde\phi_1}-t_{\varepsilon,\tilde\phi_2})\zeta(1,\xi)d\xi\\
				&=2\int_0^{2\pi}(\tilde\phi_1-\tilde\phi_2)\zeta(1,\xi)d\xi+o_\varepsilon(1)\|\tilde\phi^*\|_{L^\infty(B_L(\boldsymbol 0))}\|\zeta\|_{W^{1,p'}(B_L(\boldsymbol 0))}.
		\end{align*}
		Finally, we conclude that
		\begin{equation*}
			\varepsilon^{1-\frac{2}{p}}\|N^+_\varepsilon(\phi_1)-N^+_\varepsilon(\phi_2) \|_{W^{-1,p}(B_{L\ep}(\boldsymbol z_1^+))}=o_\varepsilon(1) \|\phi_1-\phi_2\|_{\mathcal G_\varepsilon},
		\end{equation*}
		and
		\begin{equation*}
			\|\mathcal G_\varepsilon\phi_1-\mathcal G_\varepsilon\phi_2\|_{\mathcal G_\varepsilon}=o_\varepsilon(1) \|\phi_1-\phi_2\|_{\mathcal G_\varepsilon}.
		\end{equation*}
		Hence we have shown that $\mathcal G_\varepsilon$ is a contraction map from $\mathcal B_\varepsilon$ into itself.
		
		By applying the contraction mapping theorem, we now can claim that there is a unique $\phi_\varepsilon\in \mathcal B_\varepsilon$ such that $\phi_\varepsilon=\mathcal G_\varepsilon\phi_\varepsilon$, which satisfies \eqref{2-17}. 
	\end{proof}
	
	Although we obtain a solution to the projective problem \eqref{2-6}, it does not solve the primitive equation \eqref{2-4}. To go back to the 
	problem \eqref{2-4}, we will solve a one-dimensional problem in the next part.
	
	\subsection{The one-dimensional problem}
	
	From \eqref{2-6}, it holds
	\begin{equation*}
		(-\Delta_{\mathbb S^2})\psi_\varepsilon-\frac{1}{\varepsilon^2}\boldsymbol1_{\{\psi_\varepsilon+W_\varepsilon\cos\theta>\mu_\varepsilon\}}+\frac{1}{\varepsilon^2}\boldsymbol1_{\{-\psi_\varepsilon-W_\varepsilon\cos\theta>\mu_\varepsilon\}}=\Lambda(-\Delta_{\mathbb S^2})Z_\varepsilon.
	\end{equation*}
	Multiplying above equality by $Z_\varepsilon$ and integrating on $\Pi$, we have
	\begin{equation*}
		\int_\Pi\left[(-\Delta_{\mathbb S^2})\psi_\varepsilon-\frac{1}{\varepsilon^2}\boldsymbol1_{\{\psi_\varepsilon+W_\varepsilon\cos\theta>\mu_\varepsilon\}}+\frac{1}{\varepsilon^2}\boldsymbol1_{\{-\psi_\varepsilon-W_\varepsilon\cos\theta>\mu_\varepsilon\}}\right]Z_\varepsilon d\boldsymbol\sigma=\Lambda\int_\Pi Z_\varepsilon(-\Delta_{\mathbb S^2})Z_\varepsilon d\boldsymbol \sigma.
	\end{equation*}
	To make $\psi_\varepsilon$ a solution to \eqref{2-4}, we choose a suitable traveling angular velocity $W_\varepsilon$ such that $\Lambda=0$, which needs the following lemma.
	
	\begin{lemma}\label{lem2-6}
		It holds
		\begin{align*}
			&\quad\int_\Pi\left[(-\Delta_{\mathbb S^2})\psi_\varepsilon-\frac{1}{\varepsilon^2}\boldsymbol1_{\{\psi_\varepsilon+W_\varepsilon\cos\theta>\mu_\varepsilon\}}+\frac{1}{\varepsilon^2}\boldsymbol1_{\{-\psi_\varepsilon-W_\varepsilon\cos\theta>\mu_\varepsilon\}}\right]Z_\varepsilon d\boldsymbol\sigma\\
			&=2\kappa\left[\kappa\sum\limits_{i=2}^k\partial_\theta G(\boldsymbol z_1^+,\boldsymbol z_i^+)+ \kappa \partial_\theta H(\boldsymbol z_1^+,\boldsymbol z_1^+)-\kappa\sum\limits_{i=1}^k\partial_\theta G(\boldsymbol z_1^+,\boldsymbol z_i^-)-W_\varepsilon\sin\theta_0+o_\varepsilon(1)\right].
		\end{align*}
	\end{lemma}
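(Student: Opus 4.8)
The plan is to localise the integral near the two vortex points lying in $\Pi$, to reduce it to a single one by symmetry, and then to isolate the two mechanisms producing the leading term: the discrepancy between the true patch $\Omega_{1,\varepsilon}^+$ and the model disc $\{V_{1,\varepsilon}^+>\frac{\kappa}{2\pi}\ln\frac1\varepsilon\}$, and the Laplacian of the error $\phi_\varepsilon$.

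\emph{Localisation and splitting.} On $\Pi$ the cut-offs $\chi_i^\pm$ confine $\mathrm{supp}\,Z_\varepsilon$ to $B_{2\varepsilon|\ln\varepsilon|}(\boldsymbol z_1^+)\cup B_{2\varepsilon|\ln\varepsilon|}(\boldsymbol z_1^-)$, with $Z_\varepsilon=\chi_1^+\partial_\theta U_{1,\varepsilon}^+$ on the first cap. The reflection $\theta\mapsto\pi-\theta$ (together with $\varphi\mapsto\varphi+\pi/k$ in the $(\mathbf{type\,2})$ case) is an isometry interchanging the two caps, which the configuration, $\Psi_\varepsilon$, $Z_\varepsilon$ and the whole construction respect, so the two caps contribute equally; hence the integral equals $2$ times the contribution of $B_{2\varepsilon|\ln\varepsilon|}(\boldsymbol z_1^+)$. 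On $\Pi^+$ the construction of $\Psi_\varepsilon$ and \eqref{2-2} give $(-\Delta_{\mathbb S^2})\psi_\varepsilon=\frac1{\varepsilon^2}\boldsymbol1_{\{V_{1,\varepsilon}^+>\frac{\kappa}{2\pi}\ln\frac1\varepsilon\}}+(-\Delta_{\mathbb S^2})\phi_\varepsilon$, the set $\{-\psi_\varepsilon-W_\varepsilon\cos\theta>\mu_\varepsilon\}$ does not meet $\Pi^+$, and $\{\psi_\varepsilon+W_\varepsilon\cos\theta>\mu_\varepsilon\}\cap\Pi^+=\Omega_{1,\varepsilon}^+$. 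Thus the left-hand side of the lemma equals $2(\mathrm A_\varepsilon+\mathrm B_\varepsilon)$ with
\begin{equation*}
\mathrm A_\varepsilon=\frac1{\varepsilon^2}\int_{\Pi^+}\big(\boldsymbol1_{\{V_{1,\varepsilon}^+>\frac{\kappa}{2\pi}\ln\frac1\varepsilon\}}-\boldsymbol1_{\Omega_{1,\varepsilon}^+}\big)\chi_1^+\partial_\theta U_{1,\varepsilon}^+\,d\boldsymbol\sigma,\qquad
\mathrm B_\varepsilon=\int_{\Pi^+}(-\Delta_{\mathbb S^2})\phi_\varepsilon\,\chi_1^+\partial_\theta U_{1,\varepsilon}^+\,d\boldsymbol\sigma .
\end{equation*}

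\emph{The error term $\mathrm B_\varepsilon$.} Using $(-\Delta_{\mathbb S^2})\phi_\varepsilon=\mathbb L_\varepsilon\phi_\varepsilon+\frac2{s_\varepsilon}\phi_\varepsilon\boldsymbol\delta_{\mathcal C_{s_\varepsilon}(\boldsymbol z_1^+)}$ on $\Pi^+$, self-adjointness of $\mathbb L_\varepsilon$ and $Z_\varepsilon=\chi_1^+\partial_\theta U_{1,\varepsilon}^+$ there, one gets $\mathrm B_\varepsilon=\int_{\Pi^+}Z_\varepsilon\mathbb L_\varepsilon\phi_\varepsilon\,d\boldsymbol\sigma+\frac2{s_\varepsilon}\int_{\mathbb S^2}\phi_\varepsilon\,\partial_\theta U_{1,\varepsilon}^+\,\boldsymbol\delta_{\mathcal C_{s_\varepsilon}(\boldsymbol z_1^+)}\,d\boldsymbol\sigma$. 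The first integral is exactly the quantity bounded in the proof of Lemma \ref{lem2-2} (via the pieces $I_1,\dots,I_5$, where the nondegeneracy in Theorem \ref{thm3} is what kills the principal part of $-\Delta_{\mathbb S^2}(\partial_\theta U_{1,\varepsilon}^+)$ off $\mathcal C_{s_\varepsilon}$); it is $O(\varepsilon^{-1}|\ln\varepsilon|^{-1})\|\phi_\varepsilon\|_*=o_\varepsilon(1)$ by $\|\phi_\varepsilon\|_*=O(\varepsilon)$ from Lemma \ref{lem2-5}. In the stretched variable $\boldsymbol y$ of Lemma \ref{lem2-4}, $\mathcal C_{s_\varepsilon}(\boldsymbol z_1^+)=\{|\boldsymbol y|=1\}$, one has $\partial_\theta U_{1,\varepsilon}^+=-\beta_\varepsilon\cos\xi+O(\beta_\varepsilon\varepsilon)$ there, and $\boldsymbol\delta_{\mathcal C_{s_\varepsilon}(\boldsymbol z_1^+)}$ induces the measure $s_\varepsilon\,d\xi$ against $d\boldsymbol\sigma$, so $\mathrm B_\varepsilon=-2\beta_\varepsilon\int_0^{2\pi}\tilde\phi(\cos\xi,\sin\xi)\cos\xi\,d\xi+o_\varepsilon(1)$.

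\emph{The patch mismatch $\mathrm A_\varepsilon$ and conclusion.} By Lemma \ref{lem2-4}, in the variable $\boldsymbol y$ we have $\{V_{1,\varepsilon}^+>\frac{\kappa}{2\pi}\ln\frac1\varepsilon\}=\{|\boldsymbol y|<1\}$ and $\Omega_{1,\varepsilon}^+=\{|\boldsymbol y|<1+t_\varepsilon(\xi)\}$ with $t_\varepsilon=\frac1{s_\varepsilon\beta_\varepsilon}\big(\tilde\phi(\cos\xi,\sin\xi)+\mathcal H(\xi)\big)+O(\varepsilon^2)$. Writing $\mathrm A_\varepsilon$ as an integral over the thin shell between $|\boldsymbol y|=1$ and $|\boldsymbol y|=1+t_\varepsilon$, using $d\boldsymbol\sigma=s_\varepsilon^2\,d\boldsymbol y$, $\partial_\theta U_{1,\varepsilon}^+=-\beta_\varepsilon\cos\xi+O(\beta_\varepsilon\varepsilon)$ there, and \eqref{2-1}, which gives $s_\varepsilon/\varepsilon^2=2\beta_\varepsilon$ and $s_\varepsilon\beta_\varepsilon=\frac{\kappa}{2\pi}(1+o_\varepsilon(1))$, one obtains $\mathrm A_\varepsilon=2\beta_\varepsilon\int_0^{2\pi}\cos\xi\big(\tilde\phi(\cos\xi,\sin\xi)+\mathcal H(\xi)\big)\,d\xi+o_\varepsilon(1)$. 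The $\tilde\phi$–part of $\mathrm A_\varepsilon$ cancels $\mathrm B_\varepsilon$, leaving $\mathrm A_\varepsilon+\mathrm B_\varepsilon=2\beta_\varepsilon\int_0^{2\pi}\cos\xi\,\mathcal H(\xi)\,d\xi+o_\varepsilon(1)$. Inserting the explicit $\mathcal H$ of Lemma \ref{lem2-4}, only the $\cos\xi$–mode survives, $\int_0^{2\pi}\cos\xi\,\mathcal H(\xi)\,d\xi=\pi s_\varepsilon\big(\kappa\sum_{i=2}^k\partial_\theta G(\boldsymbol z_1^+,\boldsymbol z_i^+)+\kappa\partial_\theta H(\boldsymbol z_1^+,\boldsymbol z_1^+)-\kappa\sum_{i=1}^k\partial_\theta G(\boldsymbol z_1^+,\boldsymbol z_i^-)-W_\varepsilon\sin\theta_0\big)+O(\varepsilon s_\varepsilon)$, whence, by $s_\varepsilon\beta_\varepsilon=\frac{\kappa}{2\pi}(1+o_\varepsilon(1))$, $\mathrm A_\varepsilon+\mathrm B_\varepsilon=\kappa\big(\kappa\sum_{i=2}^k\partial_\theta G(\boldsymbol z_1^+,\boldsymbol z_i^+)+\kappa\partial_\theta H(\boldsymbol z_1^+,\boldsymbol z_1^+)-\kappa\sum_{i=1}^k\partial_\theta G(\boldsymbol z_1^+,\boldsymbol z_i^-)-W_\varepsilon\sin\theta_0\big)+o_\varepsilon(1)$; multiplying by the factor $2$ gives the statement.

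\emph{Main difficulty.} The delicate point is the cancellation between the $\tilde\phi$–part of $\mathrm A_\varepsilon$ and $\mathrm B_\varepsilon$: both are of size $O(1)$, not $o_\varepsilon(1)$, so it can only be seen by an exact bookkeeping of the constant $\frac2{s_\varepsilon}$, the relation \eqref{2-1}, and of the precise measure that $\boldsymbol\delta_{\mathcal C_{s_\varepsilon}(\boldsymbol z_1^+)}$ induces against $d\boldsymbol\sigma$ on the (slightly elliptical) curve $\mathcal C_{s_\varepsilon}(\boldsymbol z_1^+)$. All the geometric corrections — the spherical versus the flat Laplacian, $\sin\theta$ versus $\sin\theta_0$, and the deviation of $\mathcal C_{s_\varepsilon}(\boldsymbol z_1^+)$ from a genuine circle — must be shown to contribute only $O(\beta_\varepsilon\varepsilon^2)=o_\varepsilon(1)$ after the relevant prefactor.
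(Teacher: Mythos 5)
Your proof is correct and follows essentially the same route as the paper: reduce to $\Pi^+$ by the reflection symmetry, show $\int_{\Pi^+}Z_\varepsilon\mathbb L_\varepsilon\phi_\varepsilon\,d\boldsymbol\sigma=o_\varepsilon(1)$ via the estimates of Lemma \ref{lem2-2} together with $\|\phi_\varepsilon\|_*=O(\varepsilon)$, and evaluate the remaining term through the boundary expansion of Lemma \ref{lem2-4}, where the $\tilde\phi$–contribution of the patch mismatch cancels against the $\frac{2}{s_\varepsilon}\phi_\varepsilon\boldsymbol\delta_{\mathcal C_{s_\varepsilon}}$ term and only the $\cos\xi$–mode of $\mathcal H$ survives, with $s_\varepsilon^2/\varepsilon^2\to\kappa/\pi$ and $s_\varepsilon\beta_\varepsilon\to\kappa/2\pi$ giving the stated constant. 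You merely make explicit the $\tilde\phi$–cancellation that the paper absorbs into its citation of Lemma \ref{lem2-4} inside the computation of $I_2$.
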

	\begin{proof}
		Notice that
		\begin{equation*}
			(-\Delta_{\mathbb S^2})\psi_\varepsilon-\frac{1}{\varepsilon^2}\boldsymbol1_{\{\psi_\varepsilon+W_\varepsilon\cos\theta>\mu_\varepsilon\}}+\frac{1}{\varepsilon^2}\boldsymbol1_{\{-\psi_\varepsilon-W_\varepsilon\cos\theta>\mu_\varepsilon\}}=\mathbb L_\varepsilon\phi_\varepsilon-N_\varepsilon(\phi_\varepsilon).
		\end{equation*}
		For simplicity, we consider the following integration in $\Pi^+=(0,\pi/2)\times(0,2\pi/k)$.
		\begin{equation*}
			\int_{\Pi^+}Z_\varepsilon\mathbb L_\varepsilon\phi_\varepsilon d\boldsymbol \sigma-\int_{\Pi^+}Z_\varepsilon N_\varepsilon^+(\phi_\varepsilon)d\boldsymbol \sigma=I_1-I_2.
		\end{equation*}
		
		For the first part of the integration $I_1$, from Lemma \ref{lem2-2}, we already know that 
		$$I_1=\frac{C}{\varepsilon|\ln\varepsilon|}\|\phi_\varepsilon\|_*=o_\varepsilon(1).$$ 
		For the second part of the integration $I_2$, by Lemma \ref{lem2-4} it holds 
			\begin{align*}
			\int_{\Pi^+}Z_\varepsilon N_\varepsilon^+(\phi_\varepsilon)d\boldsymbol \sigma&=\frac{1}{\varepsilon^2}\left[\boldsymbol1_{\{\tilde\psi_\varepsilon+W\cos(s_\varepsilon y_1+\theta_0)>\mu_\varepsilon\}}-\boldsymbol 1_{\{\tilde V_{1,\varepsilon}^+>\frac{\kappa}{2\pi}\ln\frac{1}{\varepsilon}\}}-\frac{2}{s_\varepsilon}\phi_\varepsilon\boldsymbol\delta_{\mathcal C_{s_\varepsilon}(\boldsymbol z_1^+)}\right]Z_\varepsilon d\boldsymbol\sigma\\
			&=\frac{s_\varepsilon^2}{\varepsilon^2}\int_0^{2\pi}\left[\frac{\mathcal H(\xi)}{s_\varepsilon\beta_\varepsilon}+O(\varepsilon^2)\right]Z_\varepsilon d\xi+o_\varepsilon(1),
		\end{align*}
		where $\beta_\ep$ is defined in \eqref{2-1}. Then we have 
		\begin{align*}
			I_2&=\frac{s_\varepsilon^2}{\varepsilon^2}\int_0^{2\pi}\frac{\mathcal H(\xi)}{s_\varepsilon\beta_\varepsilon} Z_\varepsilon d\xi+o_\varepsilon(1)\\
			&=-\kappa\left[\kappa\sum\limits_{i=2}^k\partial_\theta G(\boldsymbol z_1^+,\boldsymbol z_i^+)+ \kappa \partial_\theta H(\boldsymbol z_1^+,\boldsymbol z_1^+)-\kappa\sum\limits_{i=1}^k\partial_\theta G(\boldsymbol z_1^+,\boldsymbol z_i^-)-W_\varepsilon\sin\theta_0\right],
		\end{align*}
		where we have used the asymptotic estimates $\frac{s_\varepsilon^2}{\varepsilon^2}=\frac{\kappa}{\pi}+o_{\varepsilon}(1)$ and $s_\varepsilon\beta_\varepsilon=\frac{\kappa}{2\pi}+o_\varepsilon(1)$ in \eqref{2-1}. 
		Since the integration on $\Pi^-=(\pi/2,\pi)\times(0,2\pi/k)$ is the same as in $\Pi^+$, the proof is complete by multiplying it twice.
	\end{proof}

	\noindent Now we are ready to give the proof for Theorem \ref{thm1}.
	
	\noindent{\bf Proof of Theorem \ref{thm1}:} To obtain a family of desired solutions to \eqref{2-4}, we only need to find suitable $W_\varepsilon$ such that coefficient $\Lambda=0$. In the proof of Lemma \ref{lem2-6}, we already obtain
	\begin{equation*}
		\int_\Pi Z_\varepsilon(-\Delta_{\mathbb S^2})Z_\varepsilon d\boldsymbol \sigma=\int_{\Pi} \left[(\partial_\theta Z_\varepsilon)^2+\left(\frac{\partial_\varphi Z_\varepsilon}{\sin\theta}\right)^2\right]d\boldsymbol \sigma>0.
	\end{equation*} 
	Hence by Lemma \ref{lem2-6}, $\Lambda=0$ is equivalent to the characterization
	\begin{equation*}
		W_\varepsilon=\frac{\kappa}{\sin\theta_0}\sum\limits_{i=2}^k\partial_\theta G(\boldsymbol z_1^+,\boldsymbol z_i^+)+\frac{\kappa}{\sin\theta_0}\partial_\theta H(\boldsymbol z_1^+,\boldsymbol z_1^+)-\frac{\kappa}{\sin\theta_0}\sum\limits_{i=1}^k\partial_\theta G(\boldsymbol z_1^+,\boldsymbol z_i^-)+o_\varepsilon(1).
	\end{equation*}
	As a result, for the $(\mathbf{type\, 1})$ vortex streets, the traveling angular velocity is
	\begin{align*}
		W_\varepsilon^{(1)}=&\frac{\kappa}{\sin\theta_0}\sum\limits_{i=2}^k\partial_\theta G\left(\theta_0,\frac{\pi}{k},\theta_0,\frac{2\pi i}{k}-\frac{\pi}{k}\right)+\frac{\kappa}{\sin\theta_0}\partial_\theta H\left(\theta_0,\frac{\pi}{k},\theta_0,\frac{\pi }{k}\right)\\
		&-\frac{\kappa}{\sin\theta_0}\sum\limits_{i=1}^k\partial_\theta G\left(\theta_0,\frac{\pi}{k},\pi-\theta_0,\frac{2\pi i}{k}-\frac{\pi}{k}\right)+o_\varepsilon(1),
	\end{align*}
	and for the $(\mathbf{type\, 2})$ vortex streets, the traveling angular velocity is
   	\begin{align*}
   		W_\varepsilon^{(2)}=&\frac{\kappa}{\sin\theta_0}\sum\limits_{i=2}^k\partial_\theta G\left(\theta_0,\frac{\pi}{2k},\theta_0,\frac{2\pi i}{k}-\frac{3\pi}{2k}\right)+\frac{\kappa}{\sin\theta_0}\partial_\theta H\left(\theta_0,\frac{\pi}{2k},\theta_0,\frac{\pi i}{2k}\right)\\
   		&-\frac{\kappa}{\sin\theta_0}\sum\limits_{i=1}^k\partial_\theta G\left(\theta_0,\frac{\pi}{2k},\pi-\theta_0,\frac{2\pi i}{k}-\frac{\pi}{2k}\right)+o_\varepsilon(1),
   	\end{align*}
	which gives rise to the property (ii). The convergence for $\omega_\varepsilon=(-\Delta_{\mathbb S^2})\psi_\varepsilon$ stated in (i) is obvious. Finally, the estimate (iii) for vorticity sets $\Omega_{i,\varepsilon}^\pm$ is directly from Lemma \ref{lem2-4}, and $s_\varepsilon=\sqrt{\kappa/\pi} \varepsilon+o(\varepsilon)$ by \eqref{2-1}. Thus we have completed the proof.	\qed
	
	\section{The general steady case}\label{sec3}
	
	In this section, we discuss the general steady case where the vorticity solutions are near the vortex-wave system
	\begin{equation*}
		\omega^*(\boldsymbol z)=\sum\limits_{m=1}^{j}\kappa_m^+\boldsymbol \delta_{\boldsymbol z_m^+}-\sum\limits_{n=1}^{k}\kappa_n^-\boldsymbol \delta_{\boldsymbol z_n^-}+2\gamma\cos\theta,
	\end{equation*}
	with the rotating speed of the sphere $\gamma$ and $\sum\limits_{m=1}^{j}\kappa_m^+=\sum\limits_{n=1}^{k}\kappa_n^-$.
	To approximate the singular part for the stream function of each vortex, 
	let $\boldsymbol z^+_{m,\varepsilon}=(\phi^+_{m,\varepsilon}, \theta^+_{m,\varepsilon})$, $\boldsymbol z^-_{n,\varepsilon}=(\phi^-_{n,\varepsilon}, \theta^-_{n,\varepsilon})$ be undetermined, and define  
	\begin{equation*}
		V^+_{m,\varepsilon}(\boldsymbol z)=\left\{
		\begin{array}{lll}
			\frac{\kappa_m^+}{2\pi}\ln\frac{1}{\varepsilon}+\frac{1}{4\varepsilon^2}(s_{m,\varepsilon}^{+2}-|A(\boldsymbol z-\boldsymbol z^+_{m,\varepsilon})|^2), \ \ \ &\mathrm{if} \ |A(\boldsymbol z-\boldsymbol z^+_{m,\varepsilon})|\le s_{m,\varepsilon}^+,\\
			\frac{\kappa_m^+}{2\pi}\frac{|\ln\varepsilon|}{|\ln s_{m,\varepsilon}^+|}\ln|A(\boldsymbol z-\boldsymbol z^+_{m,\varepsilon})|,&\mathrm{if} \ |A(\boldsymbol z-\boldsymbol z^+_{m,\varepsilon})|\ge s_{m,\varepsilon}^+
		\end{array}
		\right.
	\end{equation*}
	and
	\begin{equation*}
		V^-_{n,\varepsilon}(\boldsymbol z)=\left\{
		\begin{array}{lll}
			\frac{\kappa_n^-}{2\pi}\ln\frac{1}{\varepsilon}+\frac{1}{4\varepsilon^2}(s_{n,\varepsilon}^{-2}-|A(\boldsymbol z-\boldsymbol z^-_{n,\varepsilon})|^2), \ \ \ &\mathrm{if} \ |A(\boldsymbol z-\boldsymbol z^-_{n,\varepsilon})|\le s_{n,\varepsilon}^-,\\
			\frac{\kappa_n^-}{2\pi}\frac{|\ln\varepsilon|}{|\ln s_{n,\varepsilon}^-|}\ln|A(\boldsymbol z-\boldsymbol z^-_{n,\varepsilon})|,&\mathrm{if} \ |A(\boldsymbol z-\boldsymbol z^-_{n,\varepsilon})|\ge s_{n,\varepsilon}^-,
		\end{array}
		\right.
	\end{equation*}
	where $A$ is the same tangent mapping defined in Section \ref{sec2}, and $s_{m,\varepsilon}^+,s_{n,\varepsilon}^-$ satisfy the regularity condition
	\begin{equation*}
		\beta_{m,\varepsilon}^+=\frac{\kappa_m^+}{2\pi}\frac{|\ln\varepsilon|}{s_{m,\varepsilon}^+|\ln s_{m,\varepsilon}^+|}=\frac{s_{m,\varepsilon}^+}{2\varepsilon^2}, \quad \beta_{n,\varepsilon}^-=\frac{\kappa_n^-}{2\pi}\frac{|\ln\varepsilon|}{s_{n,\varepsilon}^-|\ln s_{n,\varepsilon}^-|}=\frac{s_{n,\varepsilon}^-}{2\varepsilon^2}.
	\end{equation*}
	Then let
	\begin{equation*}
		R^+_{m,\varepsilon}(\boldsymbol z)=\frac{1}{\varepsilon^2}\int_{\{V^+_{m,\varepsilon}(\boldsymbol z)>\frac{\kappa}{2\pi}\ln\frac{1}{\varepsilon}\}} H(\theta,\varphi,\theta',\varphi')d\boldsymbol \sigma(\boldsymbol z'), 
	\end{equation*}
	and
	\begin{equation*}	
		\quad R^-_{n,\varepsilon}(\boldsymbol z)=\frac{1}{\varepsilon^2}\int_{\{V^-_{n,\varepsilon}(\boldsymbol z)>\frac{\kappa}{2\pi}\ln\frac{1}{\varepsilon}\}} H(\theta,\varphi,\theta',\varphi')d\boldsymbol \sigma(\boldsymbol z')
	\end{equation*}
	be the approximations of the regular part $H$. The solution $\psi_\varepsilon$ to \eqref{1-8} is decomposed as
	\begin{align*}
		\psi_\varepsilon(\boldsymbol z)&=\sum\limits_{m=1}^jU^+_{m,\varepsilon}+\sum\limits_{m=1}^jR^+_{m,\varepsilon}-\sum\limits_{n=1}^kU^-_{n,\varepsilon}-\sum\limits_{n=1}^kR^-_{n,\varepsilon}+\phi_\varepsilon\\
		&:=\Psi_\varepsilon+\phi_\varepsilon, 
    \end{align*}	
	where $\phi_\varepsilon$ is a small perturbation term. Using the linearization method, we then transform equation \eqref{1-6} to a semilinear problem on $\phi_\varepsilon$
	\begin{equation}\label{3-1}
		\mathbb L_\varepsilon\phi_\varepsilon=N_\varepsilon(\phi_\varepsilon),
	\end{equation}
	where
	\begin{equation*}
	\mathbb L_\varepsilon\phi_\varepsilon=(-\Delta_{\mathbb S^2})\phi_\varepsilon-\sum\limits_{m=1}^j\frac{2}{s_{m,\varepsilon}^+}\phi_\varepsilon\boldsymbol\delta_{\mathcal C_{s_{m,\varepsilon}^+}(\boldsymbol z_{m,\varepsilon}^+)}-\sum\limits_{n=1}^k\frac{2}{s_{n,\varepsilon}^+}\phi_\varepsilon\boldsymbol\delta_{\mathcal C_{s_{n,\varepsilon}^-}(\boldsymbol z_{n,\varepsilon}^-)},
	\end{equation*}
	and
	\begin{align*}
	N_\varepsilon(\phi_\varepsilon)=&\frac{1}{\varepsilon^2}\sum\limits_{m=1}^j\bigg(\boldsymbol1_{ B_\delta(\boldsymbol z_m^+)}\boldsymbol1_{\{\psi_\varepsilon+\gamma\cos\theta>\mu_{m,\varepsilon}^+\}}-\boldsymbol1_{\{V_{m,\varepsilon}^+>\frac{\kappa_m^+}{2\pi}\ln\frac{1}{\varepsilon}\}}-\frac{2}{s_{m,\varepsilon}^+}\phi_\varepsilon\boldsymbol\delta_{\mathcal C_{s_{m,\varepsilon}^+}(\boldsymbol z_{m,\varepsilon}^+)}\bigg)\\
	&-\frac{1}{\varepsilon^2}\sum\limits_{n=1}^k\bigg(\boldsymbol1_{B_\delta(\boldsymbol z_n^-)}\boldsymbol1_{\{-\psi_\varepsilon-\gamma\cos\theta>\mu_{n,\varepsilon}^-\}}-\boldsymbol1_{\{V_{n,\varepsilon}^->\frac{\kappa_n^-}{2\pi}\ln\frac{1}{\varepsilon}\}}+\frac{2}{s_{n,\varepsilon}^-}\phi_\varepsilon\boldsymbol\delta_{\mathcal C_{s_{n,\varepsilon}^-}(\boldsymbol z_{n,\varepsilon}^-)}\bigg)
	\end{align*}
	with 
	$$\mathcal C_{s_{m,\varepsilon}^+}(\boldsymbol z_{m,\varepsilon}^+)=\{\boldsymbol z\in \mathbb S^2\mid |A(\boldsymbol z-\boldsymbol z^+_{m,\varepsilon})|= s_{m,\varepsilon}^+\},$$
	and
	$$\mathcal C_{s_{n,\varepsilon}^-}(\boldsymbol z_{n,\varepsilon}^-)=\{\boldsymbol z\in \mathbb S^2\mid |A(\boldsymbol z-\boldsymbol z^-_{n,\varepsilon})|= s_{n,\varepsilon}^-\}.$$ 
	To make $N_\varepsilon(\phi_\varepsilon)$ as small as possible, $\mu_{m,\varepsilon}^+$ and $\mu_{n,\varepsilon}^-$ are chosen to satisfy
	\begin{align*}
		-\frac{\kappa_m^+}{2\pi}\ln\frac{1}{\varepsilon}=&-\sum\limits_{i\neq m}^j\kappa_i^+G(\boldsymbol z_{m,\varepsilon}^+,\boldsymbol z_{i,\varepsilon}^+)-\sum\limits_{l=1}^k\kappa_l^-G(\boldsymbol z_{m,\varepsilon}^+,\boldsymbol z_{l,\varepsilon}^-)\\
		&-\kappa_m^+ H(\boldsymbol z_{m,\varepsilon}^+,\boldsymbol z_{m,\varepsilon}^+)-\gamma\cos\theta_{m,\varepsilon}^++\mu_{m,\varepsilon}^+,
	\end{align*}
	and
	\begin{align*}
		-\frac{\kappa_n^-}{2\pi}\ln\frac{1}{\varepsilon}=&-\sum\limits_{l\neq n}^j\kappa_l^-G(\boldsymbol z_{n,\varepsilon}^-,\boldsymbol z_{l,\varepsilon}^-)-\sum\limits_{i=1}^k\kappa_i^+G(\boldsymbol z_{n,\varepsilon}^-,\boldsymbol z_{i,\varepsilon}^+)\\
		&- \kappa_n^-H(\boldsymbol z_{n,\varepsilon}^-,\boldsymbol z_{n,\varepsilon}^-)-\gamma\cos\theta_{n,\varepsilon}^-+\mu_{n,\varepsilon}^-.
	\end{align*}
	
   \subsection{The reduction}
   
   Since $\mathbb L_\varepsilon$ is not invertible as we discussed in Section \ref{sec2}, we first consider the projective problem of \eqref{3-1}. For this purpose, we define the tangent mapping $A_l^\pm: (\theta,\varphi)\to (x_1,x_2)$ from $\mathbb S^2$ to $T_{\boldsymbol z_l^\pm}\mathbb S^2$ with the matrix 
   \begin{equation*}
   	\mathrm{Mat}(A_l^\pm)=\left(
   	\begin{array}{ccc}
   		1 & 0             \\
   		0 & \sin\theta_{l,\varepsilon}^\pm
   	\end{array}
   	\right).
   \end{equation*}
   We also define $U^\pm_{l,\varepsilon}(\boldsymbol z)$ as 
   \begin{equation*}
   	U^\pm_{l,\varepsilon}(\boldsymbol z)=\left\{
   	\begin{array}{lll}
   		\frac{\kappa_l^\pm}{2\pi}\ln\frac{1}{\varepsilon}+\frac{1}{4\varepsilon^2}(s_{l,\varepsilon}^{\pm2}-|A_l^{\pm}(\boldsymbol z-\boldsymbol z^\pm_{l,\varepsilon})|^2), \ \ \ &\mathrm{if} \ |A_l^{\pm}(\boldsymbol z-\boldsymbol z^\pm_{l,\varepsilon})|\le s_{l,\varepsilon}^\pm,\\
   		\frac{\kappa_l^\pm}{2\pi}\frac{|\ln\varepsilon|}{|\ln s_{l,\varepsilon}^\pm|}\ln|A_l^{\pm}(\boldsymbol z-\boldsymbol z^\pm_{l,\varepsilon})|,&\mathrm{if} \ |A_l^{\pm}(\boldsymbol z-\boldsymbol z^\pm_{l,\varepsilon})|\ge s_{l,\varepsilon}^\pm.
   	\end{array}
   	\right.
   \end{equation*}
   Then the approximate kernel of $\mathbb L_\varepsilon$ is $(2j+2k)$-dimensional, which is given by the linear combination of
   \begin{equation*}
   	X_{m,\varepsilon}^+({\boldsymbol z})=\chi_m^+({\boldsymbol z})\frac{\partial U^+_{m,\varepsilon}}{\partial \theta}, 
	\quad Y_{m,\varepsilon}^+({\boldsymbol z})=\chi_m^+({\boldsymbol z})\frac{\partial U^+_{m,\varepsilon}}{\partial \varphi}, \quad 1\le m\le j,
   \end{equation*}
   and
   \begin{equation*}
   	X_{n,\varepsilon}^-({\boldsymbol z})=\chi_n^-({\boldsymbol z})\frac{\partial U^+_{n,\varepsilon}}{\partial \theta}, 
	\quad Y_{n,\varepsilon}^-({\boldsymbol z})=\chi_n^-({\boldsymbol z})\frac{\partial U^+_{n,\varepsilon}}{\partial \varphi}, \quad 1\le n\le k,
   \end{equation*}
   where
   \begin{equation*}
   	\chi_l^\pm(\boldsymbol z)=\left\{
   	\begin{array}{lll}
   		1, \ \ \  & \mathrm{if} \ |\boldsymbol z-\boldsymbol z_{l,\varepsilon}^\pm|_{\mathbb S^2}< \varepsilon|\ln\varepsilon|,\\
   		0, & \mathrm{if} \ |\boldsymbol z-\boldsymbol z_{l,\varepsilon}^\pm|_{\mathbb S^2}\ge 2\varepsilon|\ln\varepsilon|
   	\end{array}
   	\right.
   \end{equation*}
   are smooth truncation functions radially symmetric with respect to $\boldsymbol z_{l,\ep}^\pm$ satisfying
   \begin{equation*}
   	|\nabla \chi_i^\pm(\boldsymbol z)|\le \frac{2}{\varepsilon|\ln\varepsilon|} \quad \mathrm{and} \quad |\nabla^2 \chi_i^\pm(\boldsymbol z)|\le \frac{2}{\varepsilon^2|\ln\varepsilon|^2}.
   \end{equation*}
   
   With these preparations, the projection problem for \eqref{3-1} is written as
   \begin{equation}\label{3-2}
   	\begin{cases}
   		\mathbb L_\varepsilon\phi=\mathbf h(\boldsymbol z)+(-\Delta_{\mathbb S^2})\sum_{m=1}^j\left[a_m^+X_{m,\varepsilon}^++b_m^+Y_{m,\varepsilon}^+\right]\\
   		\quad\quad\quad\quad\quad\ +(-\Delta_{\mathbb S^2})\sum_{n=1}^k\left[a_n^-X_{n,\varepsilon}^-+b_n^-Y_{n,\varepsilon}^-\right], \ \ &\text{in} \ \mathbb S^2,\\
   		\int_{\mathbb S^2}  \phi(\boldsymbol z)(-\Delta_{\mathbb S^2})X_{l,\varepsilon}^\pm(\boldsymbol z) d\boldsymbol \sigma=0, \quad
   		\int_{\mathbb S^2}  \phi(\boldsymbol z)(-\Delta_{\mathbb S^2})Y_{i,\varepsilon}^\pm(\boldsymbol z) d\boldsymbol \sigma=0,
   	\end{cases}
   \end{equation}
   where the nonlinear term $\mathbf h(\boldsymbol z)$ satisfies 
   $$\mathrm{supp}\, \mathbf h(\boldsymbol z)\subset \left(\cup_{m=1}^jB_{L\varepsilon}(\boldsymbol z_{m,\varepsilon}^+)\right)\cup \left(\cup_{n=1}^kB_{L\varepsilon}(\boldsymbol z_{n,\varepsilon}^-)\right)$$
   with $L$ a large positive constant, and 
   $$\mathbf\Lambda=(a_1^+,\cdots,a_j^+,b_1^+,\cdots,b_j^+, a_1^-,\cdots,a_k^-,b_1^-,\cdots,b_k^-)$$
   is the $(2j+2k)$-dimensional projection vector determined by
   \begin{flalign*}
   	&\quad\quad\quad\quad a_l^\pm\int_{\mathbb S^2} X_{l,\varepsilon}^\pm(-\Delta_{\mathbb S^2})X_{l,\varepsilon}^\pm d\boldsymbol \sigma=\int_{\mathbb S^2}X_{l,\varepsilon}^\pm\big[\mathbb L_\ep\phi-\mathbf h(\boldsymbol z)\big]d\boldsymbol \sigma,&
   \end{flalign*}
   \begin{flalign*}
   	&\quad\quad\quad\quad b_l^\pm\int_{\mathbb S^2} Y_{l,\varepsilon}^\pm(-\Delta_{\mathbb S^2})Y_{l,\varepsilon}^\pm d\boldsymbol \sigma=\int_{\mathbb S^2}Y_{l,\varepsilon}^\pm\big[\mathbb L_\ep\phi-\mathbf h(\boldsymbol z)\big]d\boldsymbol \sigma.&
   \end{flalign*}
   By denoting the $\|\cdot\|_*$ norm for the function $\phi$ on the sphere as  
   $$\|\phi\|_*=\sup_{\mathbb S^2} |\phi|,$$
   we can prove the following coercive estimate for $\mathbb L_\varepsilon$ by a similar spirit of Lemma \ref{lem2-2}. 
   \begin{lemma}\label{lem3-1}
   	Assume that $\mathbf h$ satisfies $\mathrm{supp}\, \mathbf h\subset \left(\cup_{m=1}^jB_{L\varepsilon}(\boldsymbol z_{m,\varepsilon}^+)\right)\cup \left(\cup_{n=1}^kB_{L\varepsilon}(\boldsymbol z_{n,\varepsilon}^-)\right),$ and $$\varepsilon^{1-\frac{2}{p}}\|\mathbf h\|_{W^{-1,p}\left(\left(\cup_{m=1}^jB_{L\varepsilon}(\boldsymbol z_{m,\varepsilon}^+)\right)\cup \left(\cup_{n=1}^kB_{L\varepsilon}(\boldsymbol z_{n,\varepsilon}^-)\right)\right)}<\infty$$
   	for $p\in (2,+\infty]$, then there exists a small $\varepsilon_0>0$ and a positive constant $c_0$ such that for any $\varepsilon\in(0,\varepsilon_0]$ and solution pair $(\phi,\mathbf\Lambda)$ to \eqref{3-2}, it holds
   	\begin{align*}
   		\|\phi\|_*+\varepsilon^{1-\frac{2}{p}}\|\nabla\phi&\|_{L^p\left(\left(\cup_{m=1}^jB_{L\varepsilon}(\boldsymbol z_{m,\varepsilon}^+)\right)\cup \left(\cup_{n=1}^kB_{L\varepsilon}(\boldsymbol z_{n,\varepsilon}^-)\right)\right)}+|\mathbf\Lambda|\cdot\varepsilon^{-1}\\
   		&\le c_0\varepsilon^{1-\frac{2}{p}}\|\mathbf h\|_{W^{-1,p}\left(\left(\cup_{m=1}^jB_{L\varepsilon}(\boldsymbol z_{m,\varepsilon}^+)\right)\cup \left(\cup_{n=1}^kB_{L\varepsilon}(\boldsymbol z_{n,\varepsilon}^-)\right)\right)}.
   	\end{align*}
   \end{lemma}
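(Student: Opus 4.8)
The plan is to transcribe the two–step argument of Lemma~\ref{lem2-2}, the only genuine changes being bookkeeping: there are now $j+k$ vortices located at distinct points $\boldsymbol z_{l,\varepsilon}^\pm$, and the approximate kernel attached to each of them is two–dimensional, spanned by $X_{l,\varepsilon}^\pm=\chi_l^\pm\partial_\theta U_{l,\varepsilon}^\pm$ and $Y_{l,\varepsilon}^\pm=\chi_l^\pm\partial_\varphi U_{l,\varepsilon}^\pm$, rather than one–dimensional.

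\textbf{Step 1 (the projection vector $\mathbf\Lambda$).} From the defining relations for the components of $\mathbf\Lambda$, e.g.
$$a_l^\pm\int_{\mathbb S^2}X_{l,\varepsilon}^\pm(-\Delta_{\mathbb S^2})X_{l,\varepsilon}^\pm\,d\boldsymbol\sigma=\int_{\mathbb S^2}X_{l,\varepsilon}^\pm\big[\mathbb L_\varepsilon\phi-\mathbf h\big]\,d\boldsymbol\sigma,$$
one estimates the pieces exactly as in Lemma~\ref{lem2-2}: the left integral $\int_{\mathbb S^2}[(\partial_\theta X_{l,\varepsilon}^\pm)^2+\sin^{-2}\theta\,(\partial_\varphi X_{l,\varepsilon}^\pm)^2]\,d\boldsymbol\sigma$ equals a positive constant times $\varepsilon^{-2}(1+o_\varepsilon(1))$; the term $\int X_{l,\varepsilon}^\pm\mathbb L_\varepsilon\phi=\int\phi\,\mathbb L_\varepsilon X_{l,\varepsilon}^\pm$ is $O\big((\varepsilon|\ln\varepsilon|)^{-1}\|\phi\|_*\big)$, obtained by transferring $\mathbb L_\varepsilon$ onto $X_{l,\varepsilon}^\pm$, bounding the derivative–of–truncation contributions via $|\nabla\chi_l^\pm|\lesssim(\varepsilon|\ln\varepsilon|)^{-1}$ and $|\nabla^2\chi_l^\pm|\lesssim(\varepsilon|\ln\varepsilon|)^{-2}$ and annihilating the remaining genuine–kernel term up to $O_\varepsilon(1)$ by Theorem~\ref{thm3} together with the regularity relations fixing $s_{l,\varepsilon}^\pm$; and $\int X_{l,\varepsilon}^\pm\mathbf h$ is $O\big(\varepsilon^{2/p'-2}\|\mathbf h\|_{W^{-1,p}}\big)$ by Hölder and Poincaré on the ball. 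The same computation applies to $Y_{l,\varepsilon}^\pm$. Dividing out $\varepsilon^{-2}$ and collecting all $2j+2k$ components yields
$$|\mathbf\Lambda|\,\varepsilon^{-1}\le\frac{C}{|\ln\varepsilon|}\|\phi\|_*+C\varepsilon^{1-\frac2p}\|\mathbf h\|_{W^{-1,p}} .$$

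\textbf{Step 2 (the $\|\phi\|_*$ and $\|\nabla\phi\|_{L^p}$ bounds, by blow–up).} Suppose the asserted inequality fails: there exist $\varepsilon_n\to0$ and solutions $(\phi_n,\mathbf\Lambda_n)$ with $\|\phi_n\|_*+\varepsilon_n^{1-2/p}\|\nabla\phi_n\|_{L^p}=1$ while $\varepsilon_n^{1-2/p}\|\mathbf h_n\|_{W^{-1,p}}\to0$; by Step~1, $|\mathbf\Lambda_n|=o(\varepsilon_n)$. Rewriting the equation as $(-\Delta_{\mathbb S^2})\phi_n=\sum\frac{2}{s_{\cdot,\varepsilon}^\pm}\phi_n\boldsymbol\delta_{\mathcal C}+\mathbf h_n+(-\Delta_{\mathbb S^2})\big[\sum aX+\sum bY\big]$, I rescale around each $\boldsymbol z_{l,\varepsilon_n}^\pm$ by the anisotropic change of variables of Section~\ref{sec2} at scale $s_{l,\varepsilon_n}^\pm$ (with $\sin^{-1}\theta_{l,\varepsilon_n}^\pm$ in the longitude slot). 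Since the truncation supports are pairwise disjoint and the smooth part $H$ and the mutual Green kernels $G(\boldsymbol z_l^\pm,\boldsymbol z_m^\pm)$, $l\neq m$, are regular near the blow–up point, all contributions of the other $j+k-1$ vortices become $o(1)$ after rescaling, so by elliptic regularity and Sobolev embedding the rescaled $\tilde\phi_n$ is bounded in $W^{1,p}_{\mathrm{loc}}(\mathbb R^2)\cap C^\alpha_{\mathrm{loc}}(\mathbb R^2)$ and converges, along a subsequence, to some $\phi^*_{l,\pm}\in L^\infty(\mathbb R^2)\cap C(\mathbb R^2)$ solving $-\Delta\phi^*_{l,\pm}=2\phi^*_{l,\pm}\boldsymbol\delta_{|\boldsymbol y|=1}$. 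By Theorem~\ref{thm3}, $\phi^*_{l,\pm}\in\mathrm{span}\{\partial_{y_1}w,\partial_{y_2}w\}$; the two orthogonality conditions $\int\phi(-\Delta_{\mathbb S^2})X_{l,\varepsilon}^\pm=\int\phi(-\Delta_{\mathbb S^2})Y_{l,\varepsilon}^\pm=0$ pass to the limit as $\int_{\mathbb R^2}\nabla\phi^*_{l,\pm}\cdot\nabla\partial_{y_1}w=\int_{\mathbb R^2}\nabla\phi^*_{l,\pm}\cdot\nabla\partial_{y_2}w=0$, forcing $\phi^*_{l,\pm}\equiv0$. Hence $\|\phi_n\|_{L^\infty(\cup B_{L\varepsilon_n})}=o_n(1)$; the maximum principle for $-\Delta_{\mathbb S^2}-\sum\frac{2}{s}\boldsymbol\delta_{\mathcal C}$ (whose sign is controlled exactly as in Lemma~\ref{lem2-2}) upgrades this to $\|\phi_n\|_*=o_n(1)$, and testing the rescaled equation against $\zeta\in C_0^\infty(B_L(\boldsymbol 0))$ as in \eqref{2-11} gives $\varepsilon_n^{1-2/p}\|\nabla\phi_n\|_{L^p}=o_n(1)$, contradicting the normalization. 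Combined with Step~1, this yields the estimate of Lemma~\ref{lem3-1}.

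\textbf{Main obstacle.} The substantive novelty over Lemma~\ref{lem2-2} is that one cannot reduce to a single fundamental cell with a one–dimensional kernel: the blow–up must be run simultaneously at all $j+k$ vortices, and one must verify (i) the \emph{decoupling} of the limit problems — that the truncation deltas, the regular part $H$, and the mutual Green kernels $G(\boldsymbol z_l^\pm,\boldsymbol z_m^\pm)$ contribute only $o(1)$ after rescaling, so each blow–up limit is an isolated Rankine linearization — and (ii) that at every vortex \emph{both} the $\theta$– and the $\varphi$–orthogonality condition are present, which is precisely what annihilates the full two–dimensional kernel $\mathrm{span}\{\partial_{y_1}w,\partial_{y_2}w\}$. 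Once the decoupling is in place, the remainder is a routine transcription of Section~\ref{sec2}.
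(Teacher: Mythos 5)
Your proposal is correct and follows exactly the route the paper intends: the paper itself gives no separate proof of Lemma~\ref{lem3-1}, stating only that it is proved ``by a similar spirit of Lemma~\ref{lem2-2},'' and your two-step transcription (projection-coefficient estimate via integration by parts and H\"older/Poincar\'e, then blow-up with Theorem~\ref{thm3} and the maximum principle) is that argument. Your identification of the two genuinely new points --- the decoupling of the $j+k$ blow-up limits at well-separated centers, and the need for both the $\theta$- and $\varphi$-orthogonality conditions to kill the now two-dimensional kernel $\mathrm{span}\{\partial_{y_1}w,\partial_{y_2}w\}$ in the absence of $k$-fold symmetry --- is exactly what distinguishes this lemma from Lemma~\ref{lem2-2}.
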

   
   According to Lemma~\ref{lem3-1}, we use the contraction mapping theorem as in Section~\ref{sec2} to verify there exists 
   a unique solution $\phi_\varepsilon$ to \eqref{3-2} with $h(\boldsymbol z)=N_\varepsilon(\phi_\varepsilon)$, such that
   \begin{equation*}
   	\|\phi_\varepsilon\|_*+\varepsilon^{1-\frac{2}{p}}\|\nabla\phi_\varepsilon\|_{L^p\left(\left(\cup_{m=1}^jB_{L\varepsilon}(\boldsymbol z_{m,\varepsilon}^+)\right)\cup \left(\cup_{n=1}^kB_{L\varepsilon}(\boldsymbol z_{n,\varepsilon}^-)\right)\right)}= O(\varepsilon)
   \end{equation*}
   for $p\in(2,+\infty]$, provided $\varepsilon\in (0,\varepsilon_0]$ with $\varepsilon_0$ small.
   
	\subsection{The $(2j+2k)$-dimensional problem}
	
	Multiplying \eqref{3-2} by $X_{l,\varepsilon}^\pm$, $Y_{i,\varepsilon}^\pm$ and integrating on $\mathbb S^2$, for the projection vector $\boldsymbol \Lambda=(a_1^+,\cdots,a_j^+,b_1^+,\cdots,b_j^+, a_1^-,\cdots,a_k^-,b_1^-,\cdots,b_k^-)$ we have the equations
	\begin{align*}
		\int_{\mathbb S^2}\left[(-\Delta_{\mathbb S^2})\psi_\varepsilon-\frac{1}{\varepsilon^2}\sum_{m=1}^j\boldsymbol1_{B_\delta(\boldsymbol z_m^+)}\boldsymbol1_{\{\psi_\varepsilon+\gamma\cos\theta>\mu_{m,\varepsilon}^+\}}+\frac{1}{\varepsilon^2}\sum_{n=1}^k\boldsymbol1_{B_\delta(\boldsymbol z_n^-)}\boldsymbol1_{\{-\psi_\varepsilon-\gamma\cos\theta>\mu_{n,\varepsilon}^-\}}\right]X_{m,\varepsilon}^+ d\boldsymbol\sigma\\
		=a_m^+\int_{\mathbb S^2} X_{m,\varepsilon}^+(-\Delta_{\mathbb S^2})X_{m,\varepsilon}^+ d\boldsymbol \sigma,
	\end{align*}
	\begin{align*}
		\int_{\mathbb S^2}\left[(-\Delta_{\mathbb S^2})\psi_\varepsilon-\frac{1}{\varepsilon^2}\sum_{m=1}^j\boldsymbol1_{B_\delta(\boldsymbol z_m^+)}\boldsymbol1_{\{\psi_\varepsilon+\gamma\cos\theta>\mu_{m,\varepsilon}^+\}}+\frac{1}{\varepsilon^2}\sum_{n=1}^k\boldsymbol1_{B_\delta(\boldsymbol z_n^-)}\boldsymbol1_{\{-\psi_\varepsilon-\gamma\cos\theta>\mu_{n,\varepsilon}^-\}}\right]Y_{m,\varepsilon}^+ d\boldsymbol\sigma\\
		=b_m^+\int_{\mathbb S^2} Y_{m,\varepsilon}^+(-\Delta_{\mathbb S^2})Y_{m,\varepsilon}^+ d\boldsymbol \sigma,
	\end{align*}
	\begin{align*}
		\int_{\mathbb S^2}\left[(-\Delta_{\mathbb S^2})\psi_\varepsilon-\frac{1}{\varepsilon^2}\sum_{m=1}^j\boldsymbol1_{B_\delta(\boldsymbol z_m^+)}\boldsymbol1_{\{\psi_\varepsilon+\gamma\cos\theta>\mu_{m,\varepsilon}^+\}}+\frac{1}{\varepsilon^2}\sum_{n=1}^k\boldsymbol1_{B_\delta(\boldsymbol z_n^-)}\boldsymbol1_{\{-\psi_\varepsilon-\gamma\cos\theta>\mu_{n,\varepsilon}^-\}}\right]X_{n,\varepsilon}^- d\boldsymbol\sigma\\
		=a_n^-\int_{\mathbb S^2} X_{n,\varepsilon}^-(-\Delta_{\mathbb S^2})X_{n,\varepsilon}^- d\boldsymbol \sigma,
	\end{align*}
	\begin{align*}
		\int_{\mathbb S^2}\left[(-\Delta_{\mathbb S^2})\psi_\varepsilon-\frac{1}{\varepsilon^2}\sum_{m=1}^j\boldsymbol1_{B_\delta(\boldsymbol z_m^+)}\boldsymbol1_{\{\psi_\varepsilon+\gamma\cos\theta>\mu_{m,\varepsilon}^+\}}+\frac{1}{\varepsilon^2}\sum_{n=1}^k\boldsymbol1_{B_\delta(\boldsymbol z_n^-)}\boldsymbol1_{\{-\psi_\varepsilon-\gamma\cos\theta>\mu_{n,\varepsilon}^-\}}\right]Y_{n,\varepsilon}^- d\boldsymbol\sigma\\
		=b_n^-\int_{\mathbb S^2} Y_{n,\varepsilon}^-(-\Delta_{\mathbb S^2})Y_{n,\varepsilon}^- d\boldsymbol \sigma.
	\end{align*}
	To make $\psi_\varepsilon$ a solution to \eqref{1-8}, the following lemma is established as an analogy of Lemma \ref{lem2-6}.
	\begin{lemma}\label{lem3-2}
		One has
		\begin{align*}
			&\quad\int_{\mathbb S^2}\left[(-\Delta_{\mathbb S^2})\psi_\varepsilon-\frac{1}{\varepsilon^2}\sum_{m=1}^j\boldsymbol1_{B_\delta(\boldsymbol z_m^+)}\boldsymbol1_{\{\psi_\varepsilon+\gamma\cos\theta>\mu_{m,\varepsilon}^+\}}+\frac{1}{\varepsilon^2}\sum_{n=1}^k\boldsymbol1_{B_\delta(\boldsymbol z_n^-)}\boldsymbol1_{\{-\psi_\varepsilon-\gamma\cos\theta>\mu_{n,\varepsilon}^-\}}\right]X_{m,\varepsilon}^+ d\boldsymbol\sigma\\
			&=\kappa_m^+\left[\kappa_i^+\sum\limits_{i\neq m}^j\partial_\theta G(\boldsymbol z_{m,\varepsilon}^+,\boldsymbol z_{i,\varepsilon}^+)+ \kappa_m^+ \partial_\theta H(\boldsymbol z_{m,\varepsilon}^+,\boldsymbol z_{m,\varepsilon}^+)-\kappa_l^-\sum\limits_{l=1}^k\partial_\theta G(\boldsymbol z_{m,\varepsilon}^+,\boldsymbol z_{l,\varepsilon}^-)-\gamma\sin\theta_{m,\varepsilon}^++o_\varepsilon(1)\right],
		\end{align*}
		\begin{align*}
			&\quad\int_{\mathbb S^2}\left[(-\Delta_{\mathbb S^2})\psi_\varepsilon-\frac{1}{\varepsilon^2}\sum_{m=1}^j\boldsymbol1_{B_\delta(\boldsymbol z_m^+)}\boldsymbol1_{\{\psi_\varepsilon+\gamma\cos\theta>\mu_{m,\varepsilon}^+\}}+\frac{1}{\varepsilon^2}\sum_{n=1}^k\boldsymbol1_{B_\delta(\boldsymbol z_n^-)}\boldsymbol1_{\{-\psi_\varepsilon-\gamma\cos\theta>\mu_{n,\varepsilon}^-\}}\right]Y_{m,\varepsilon}^+ d\boldsymbol\sigma\\
			&=\kappa_m^-\left[\kappa_i^+\sum\limits_{i\neq m}^j\partial_\varphi G(\boldsymbol z_{m,\varepsilon}^+,\boldsymbol z_{i,\varepsilon}^+)+ \kappa_m^+ \partial_\varphi H(\boldsymbol z_{m,\varepsilon}^+,\boldsymbol z_{m,\varepsilon}^+)-\kappa_l^-\sum\limits_{l=1}^k\partial_\varphi G(\boldsymbol z_{m,\varepsilon}^+,\boldsymbol z_{l,\varepsilon}^-)+o_\varepsilon(1)\right],
		\end{align*}
		\begin{align*}
			&\quad\int_{\mathbb S^2}\left[(-\Delta_{\mathbb S^2})\psi_\varepsilon-\frac{1}{\varepsilon^2}\sum_{m=1}^j\boldsymbol1_{B_\delta(\boldsymbol z_m^+)}\boldsymbol1_{\{\psi_\varepsilon+\gamma\cos\theta>\mu_{m,\varepsilon}^+\}}+\frac{1}{\varepsilon^2}\sum_{n=1}^k\boldsymbol1_{B_\delta(\boldsymbol z_n^-)}\boldsymbol1_{\{-\psi_\varepsilon-\gamma\cos\theta>\mu_{n,\varepsilon}^-\}}\right]X_{n,\varepsilon}^- d\boldsymbol\sigma\\
			&=\kappa_n^+\left[\kappa_l^+\sum\limits_{l\neq n}^k\partial_\theta G(\boldsymbol z_{n,\varepsilon}^-,\boldsymbol z_{l,\varepsilon}^-)+ \kappa_n^+ \partial_\theta H(\boldsymbol z_{n,\varepsilon}^+,\boldsymbol z_{n,\varepsilon}^+)-\kappa_i^-\sum\limits_{i=1}^j\partial_\theta G(\boldsymbol z_{n,\varepsilon}^+,\boldsymbol z_{i,\varepsilon}^-)-\gamma\sin\theta_{n,\varepsilon}^-+o_\varepsilon(1)\right],
		\end{align*}
		\begin{align*}
			&\quad\int_{\mathbb S^2}\left[(-\Delta_{\mathbb S^2})\psi_\varepsilon-\frac{1}{\varepsilon^2}\sum_{m=1}^j\boldsymbol1_{B_\delta(\boldsymbol z_m^+)}\boldsymbol1_{\{\psi_\varepsilon+\gamma\cos\theta>\mu_{m,\varepsilon}^+\}}+\frac{1}{\varepsilon^2}\sum_{n=1}^k\boldsymbol1_{B_\delta(\boldsymbol z_n^-)}\boldsymbol1_{\{-\psi_\varepsilon-\gamma\cos\theta>\mu_{n,\varepsilon}^-\}}\right]Y_{n,\varepsilon}^- d\boldsymbol\sigma\\
			&=\kappa_n^-\left[\kappa_l^+\sum\limits_{l\neq n}^k\partial_\varphi G(\boldsymbol z_{n,\varepsilon}^-,\boldsymbol z_{l,\varepsilon}^-)+ \kappa_n^+ \partial_\varphi H(\boldsymbol z_{n,\varepsilon}^+,\boldsymbol z_{n,\varepsilon}^+)-\kappa_i^-\sum\limits_{i=1}^j\partial_\varphi G(\boldsymbol z_{n,\varepsilon}^+,\boldsymbol z_{i,\varepsilon}^-)+o_\varepsilon(1)\right].
		\end{align*}
	\end{lemma}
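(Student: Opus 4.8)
The plan is to repeat, almost verbatim but localized at each of the $j+k$ vortices, the computation carried out in Lemma \ref{lem2-6}. The starting point is the identity, valid by the very definitions of $\mathbb L_\varepsilon$ and $N_\varepsilon$ in \eqref{3-1},
\begin{align*}
&(-\Delta_{\mathbb S^2})\psi_\varepsilon-\frac{1}{\varepsilon^2}\sum_{m=1}^j\boldsymbol1_{B_\delta(\boldsymbol z_m^+)}\boldsymbol1_{\{\psi_\varepsilon+\gamma\cos\theta>\mu_{m,\varepsilon}^+\}}+\frac{1}{\varepsilon^2}\sum_{n=1}^k\boldsymbol1_{B_\delta(\boldsymbol z_n^-)}\boldsymbol1_{\{-\psi_\varepsilon-\gamma\cos\theta>\mu_{n,\varepsilon}^-\}}\\
&\qquad=\mathbb L_\varepsilon\phi_\varepsilon-N_\varepsilon(\phi_\varepsilon).
\end{align*}
Each test function $X_{m,\varepsilon}^+,Y_{m,\varepsilon}^+,X_{n,\varepsilon}^-,Y_{n,\varepsilon}^-$ is supported in the ball $B_{2\varepsilon|\ln\varepsilon|}$ about the corresponding center, so the integral over $\mathbb S^2$ collapses to one shrinking ball, on which only the matching summand of $N_\varepsilon$ is active; the interaction contributions of the other vortices enter only through the smooth parts $G,H$ and are already absorbed into $\Psi_\varepsilon$. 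I would carry out the pairing with $X_{m,\varepsilon}^+$ in full and indicate that the remaining three are obtained by replacing $\partial_\theta$ by $\partial_\varphi$ and/or reversing the sign of the vortex.

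For the linear piece, I would integrate by parts to move $\mathbb L_\varepsilon$ onto $X_{m,\varepsilon}^+=\chi_m^+\partial_\theta U^+_{m,\varepsilon}$. As in Lemma \ref{lem2-2}, after rescaling by $s_{m,\varepsilon}^+$ the function $\partial_\theta U^+_{m,\varepsilon}$ becomes exactly the kernel element $\partial_{y_1}w$ of the limiting operator $\mathbb L_0$ of Theorem \ref{thm3}; hence every term produced by commuting $\mathbb L_\varepsilon$ past $\chi_m^+$ and by the discrepancy between $|A|$ and $|A_m^+|$ is bounded by $\tfrac{C}{\varepsilon|\ln\varepsilon|}\|\phi_\varepsilon\|_*$. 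Since the contraction argument based on Lemma \ref{lem3-1} gives $\|\phi_\varepsilon\|_*=O(\varepsilon)$, this whole contribution is $o_\varepsilon(1)$.

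The main term is $\int X_{m,\varepsilon}^+N_\varepsilon(\phi_\varepsilon)$. Here I would invoke the analog of Lemma \ref{lem2-4} centered at $\boldsymbol z_{m,\varepsilon}^+$: after the scaling $\tilde v(\boldsymbol y)=v(s_{m,\varepsilon}^+y_1+\theta_{m,\varepsilon}^+,\ldots)$, the flux constant $\mu_{m,\varepsilon}^+$ was chosen precisely so that on $|\boldsymbol y|=1$ one has $\tilde\psi_\varepsilon+\gamma\cos(s_{m,\varepsilon}^+y_1+\theta_{m,\varepsilon}^+)-\mu_{m,\varepsilon}^+=\tilde\phi_\varepsilon+\mathcal H_m(\xi)+O(\varepsilon^2)$, where $\mathcal H_m(\xi)$ is the linear-in-displacement term assembled from $\sum_{i\neq m}\kappa_i^+\nabla G(\boldsymbol z_{m,\varepsilon}^+,\boldsymbol z_{i,\varepsilon}^+)$, $\kappa_m^+\nabla H(\boldsymbol z_{m,\varepsilon}^+,\boldsymbol z_{m,\varepsilon}^+)$, $-\sum_l\kappa_l^-\nabla G(\boldsymbol z_{m,\varepsilon}^+,\boldsymbol z_{l,\varepsilon}^-)$ and $-(\gamma\sin\theta_{m,\varepsilon}^+,0)$. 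Then the vorticity level set is $(1+t_\varepsilon(\xi))(\cos\xi,\sin\xi)$ with $t_\varepsilon=\tfrac{1}{s_{m,\varepsilon}^+\beta_{m,\varepsilon}^+}(\tilde\phi_\varepsilon+\mathcal H_m)+O(\varepsilon^2)$; the $\tilde\phi_\varepsilon$ part cancels against the $\boldsymbol\delta_{\mathcal C_{s_{m,\varepsilon}^+}(\boldsymbol z_{m,\varepsilon}^+)}$ term exactly as in Lemma \ref{lem2-5}, and the $\mathcal H_m$ part, after using $\tfrac{(s_{m,\varepsilon}^+)^2}{\varepsilon^2}=\tfrac{\kappa_m^+}{\pi}+o_\varepsilon(1)$ and $s_{m,\varepsilon}^+\beta_{m,\varepsilon}^+=\tfrac{\kappa_m^+}{2\pi}+o_\varepsilon(1)$ (the analogs of \eqref{2-1}), produces exactly $-\kappa_m^+$ times the bracketed expression, the odd-in-$\xi$ ($\partial_\theta$) component being selected because $\partial_\theta U^+_{m,\varepsilon}$ is odd in $y_1$. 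For $Y_{m,\varepsilon}^+$ the $\sin\xi$ component is picked out and the $\gamma$-term disappears since $\partial_\varphi(\gamma\cos\theta)=0$; for $X_{n,\varepsilon}^-,Y_{n,\varepsilon}^-$ the roles of $+$ and $-$ vortices and of $\partial_\theta,\partial_\varphi$ switch. Adding the linear and nonlinear contributions gives the four stated identities.

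I expect the genuine obstacle to be the bookkeeping inside the $\boldsymbol z_{l,\varepsilon}^\pm$-dependent version of Lemma \ref{lem2-4}: one must check that the Taylor expansion of $\sum R^\pm_{l,\varepsilon}$ together with the interaction terms $\kappa_i^\pm G(\cdot,\boldsymbol z_{i,\varepsilon}^\pm)$ around a variable center $\boldsymbol z_{m,\varepsilon}^+$ really reproduces $\mathcal H_m$ with errors $O(\varepsilon)$ uniform over the admissible range of centers, and that the $O(\varepsilon^2)$ remainders in the level-set expansion integrate to $o_\varepsilon(1)$ against $X_{m,\varepsilon}^+$, whose $W^{-1,p}$-type size is $O(\varepsilon^{2/p-2})$. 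Once these uniform estimates are in place, everything else is a rescaling identity and the previous lemmas apply directly.
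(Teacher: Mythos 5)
Your proposal is correct and follows essentially the same route as the paper: the paper gives no separate proof of Lemma \ref{lem3-2}, stating only that it is "an analogy of Lemma \ref{lem2-6}," and your argument is precisely that analogy carried out — the splitting into $\int X_{m,\varepsilon}^{+}\mathbb L_\varepsilon\phi_\varepsilon$ (controlled by the coercive estimate and $\|\phi_\varepsilon\|_*=O(\varepsilon)$) and $\int X_{m,\varepsilon}^{+}N_\varepsilon(\phi_\varepsilon)$ (evaluated via the level-set expansion with the localized $\mathcal H_m$, the $\tilde\phi_\varepsilon$ part cancelling against the circle delta, and the asymptotics $s_{m,\varepsilon}^{+2}/\varepsilon^2\to\kappa_m^+/\pi$, $s_{m,\varepsilon}^+\beta_{m,\varepsilon}^+\to\kappa_m^+/2\pi$). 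Your closing remarks on the uniform-in-center bookkeeping correctly identify the only place where the analogy requires checking beyond Section \ref{sec2}.
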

	
    \noindent{\bf Proof of Theorem \ref{thm2}:} To obtain a family of desired solutions to \eqref{1-8}, we need to choose suitable locations $\boldsymbol z_{m,\varepsilon}^+, \boldsymbol z_{n,\varepsilon}^-$ such that $\mathbf \Lambda=\boldsymbol 0$. Notice that 
    \begin{equation*}
    	\int_{\mathbb S^2} X_{l,\varepsilon}^\pm(-\Delta_{\mathbb S^2})X_{l,\varepsilon}^\pm d\boldsymbol \sigma>0 \quad \mathrm{and} \quad \int_{\mathbb S^2} Y_{i,\varepsilon}^\pm(-\Delta_{\mathbb S^2})Y_{i,\varepsilon}^\pm d\boldsymbol \sigma>0.
    \end{equation*} 
    Hence by Lemma \ref{lem3-2} and $(\boldsymbol z_1^+,\cdots,\boldsymbol z_j^+, \boldsymbol z_1^-,\cdots, \boldsymbol z_k^-)$ being a nondegenerate critical point of Kirchhoff--Routh function $\mathcal K_{k+j}$ in \eqref{1-7}, we claim that there exists a proper location series
    \begin{equation*}
    	\left(\boldsymbol z_{1,\varepsilon}^+,\cdots,\boldsymbol z_{j,\varepsilon}^+, \boldsymbol z_{1,\varepsilon}^-,\cdots, \boldsymbol z_{k,\varepsilon}^-\right)= \left(\boldsymbol z_1^+,\cdots,\boldsymbol z_j^+, \boldsymbol z_1^-,\cdots, \boldsymbol z_k^-\right)+o_\varepsilon(1)
    \end{equation*}
    such that $\mathbf \Lambda=\boldsymbol 0$, which gives the existence and limiting behavior for vortex centers 
    $$(\boldsymbol z_{1,\varepsilon}^+,\cdots,\boldsymbol z_{j,\varepsilon}^+, \boldsymbol z_{1,\varepsilon}^-,\cdots, \boldsymbol z_{k,\varepsilon}^-)$$
    in (ii). While the estimates for level sets $\Omega_{m,\varepsilon}^+$ and $\Omega_{n,\varepsilon}^-$ are similar as in Theorem \ref{thm1}. According to our construction, the convergence for $\omega_\varepsilon=(-\Delta_{\mathbb S^2})\psi_\varepsilon$ stated in (i) is obvious. Hence we have completed the proof.	\qed
    
    \bigskip
    
    \noindent{\bf Conflict of interest statement:} On behalf of all authors, the corresponding author states that there is no conflict of interest.
    
    \bigskip
    
    \noindent{\bf Data available statement:} Our manuscript has no associated data.
    
    \bigskip
    
	\phantom{s}
	\thispagestyle{empty}

\end{document}